\newcommand{\T}{\mathbb{T}}
\newcommand{\Sn}{\mathbb S^{n-1}}
\newcommand{\intd}{\mathrm{d}}
\newcommand{\MinTen}[3]
  {
    \ifthenelse{\equal{#2}{0}}{\Phi_{#1}^{#3}}{\Phi_{#1}^{#2,#3}}
  }
\newcommand{\MinTenZ}[3]
  {
    \MinTen{#1}{{#2}}{#3}
  }
\newcommand{\IntMinTen}[4]
  {
    \ifthenelse{\equal{#2}{0}}{\Phi_{#1,#4}^{#3}}{\Phi_{#1,#4}^{#2,#3}}
  }
\newcommand{\IntMinTenZ}[4]
  {
    \IntMinTen{#1}{{#2}}{#3}{#4}
  }
\newcommand{\TenCM}[4]
  {
    \phi_{#1}^{#2,#3,#4}
  }
\newcommand{\IntTenCM}[5]
  {
    \phi_{#1,#5}^{#2,#3,#4}
  }
\newcommand{\PTenCM}[4]
  {
    \psi_{#1}^{#2,#3,#4}
  }
\DeclareMathOperator{\Nor}{{\rm Nor}}
\DeclareMathOperator{\sign}{{\rm sign}}
\newcommand{\eps}{\varepsilon}
\newcommand{\1}{{\mathbf{1}}} % Indicator Function
\newcommand{\cB}{{\mathcal{B}}}
\newcommand{\cF}{{\mathcal{F}}}
\newcommand{\cH}{{\mathcal{H}}}
\newcommand{\cK}{{\mathcal{K}}}
\newcommand{\cP}{{\mathcal{P}}}
\newcommand{\N}{{\mathbb{N}}} % positive integers
\newcommand{\R}{{\mathbb{R}}} % real numbers
\DeclareMathOperator{\Span}{{\rm span}}
\DeclarePairedDelimiter\norm\lVert\rVert
\DeclarePairedDelimiter\abs\lvert\rvert
\newcommand\nularg{\,\cdot\,}
\newtheorem{lemma}{Lemma}	
\newtheorem{corollary}{Corollary}
\newtheorem{theorem}{Theorem}
\newtheorem{proposition}{Proposition}	
\newtheorem*{lemma*}{Lemma}
\theoremstyle{definition}
\begin{document}
%\hfill\today
\bigskip

\title[Crofton Formulae for Tensor-Valued Curvature Measures]{Crofton Formulae for Tensor-Valued Curvature Measures}

\author[Daniel Hug and Jan A. Weis]
{Daniel Hug and Jan A. Weis}
\address{Karlsruhe Institute of Technology (KIT), Department of Mathematics, D-76128 Karlsruhe, Germany} \email{daniel.hug@kit.edu}
\address{Karlsruhe Institute of Technology (KIT), Department of Mathematics, D-76128 Karlsruhe, Germany}
\email{jan.weis@kit.edu}
\thanks{The authors were supported in part by DFG grants FOR 1548 and HU 1874/4-2}
\subjclass[2010]{Primary: 52A20, 53C65; secondary: 52A22, 52A38, 28A75}
\keywords{Crofton formula, tensor valuation, curvature measure, Minkowski tensor, integral geometry}

\pagestyle{myheadings}
\markboth{Daniel Hug and Jan A. Weis}{Crofton Formulae for Tensor-Valued Curvature Measures}

\begin{abstract}
    The tensorial curvature measures are tensor-valued
    generalizations of the curvature measures of convex bodies. We
    prove a set of Crofton formulae for such tensorial curvature
    measures.  These formulae express the integral mean of the
    tensorial curvature measures of the intersection of a given convex
    body with a uniform affine $k$-flat in terms of linear
    combinations of tensorial curvature measures of the given convex
    body. Here we first focus on the case where the tensorial curvature
    measures of the intersection of the given body with an affine flat
    is defined with respect to the affine flat as its ambient space.
    From these formulae we then deduce some new and also recover known
    special cases. In particular, we substantially simplify some of
    the constants that were obained in previous work on Minkowski
    tensors. In a second step, we explain how the results can be extended
		to the case where the tensorial curvature
    measure of the intersection of the given body with an affine flat
    is determined with respect to the ambient Euclidean space.
\end{abstract}

\maketitle

  \section{Introduction}\label{14-sec1}

  The \emph{classical Crofton formula} is a major result in integral
  geometry.  Its name originates from works of the Irish mathematician
  Morgan W. Crofton \cite{14-Crofton85} on integral geometry in
  $\R^{2}$ in the late 19th century. For a convex body $K$ (a
  non-empty, convex and compact set) in the $n$-dimensional Euclidean
  space $\R^{n}$, $n \in \N$, the classical Crofton formula (see
  \cite[(4.59)]{14-Schneider14}) states that
  \begin{equation} \label{14-Form_Croft_Classic} \int_{A(n, k)} V_{j}
    (K \cap E) \, \mu_{k} (\intd E) = \alpha_{n j k} V_{n - k + j}
    (K),
  \end{equation}
  for $k \in \{ 0, \ldots, n \}$ and $j \in \{ 0, \ldots, k \}$, where
  $A(n, k)$ is the affine Grassmannian of $k$-flats in $\R^{n}$,
  $\mu_{k}$ denotes the motion invariant Haar measure on $A(n,k)$,
  normalized as in \cite[p. 588]{14-SchnWeil08}, and $\alpha_{n j k} >
  0$ is an explicitly known constant.

  Let $ \cK^{n} $ denote the set of convex bodies in $\R^n$.  The
  functionals $V_{i} :\cK^n\rightarrow \R$, for $i\in\{0,\ldots,n\}$,
  appearing in \eqref{14-Form_Croft_Classic}, are the {\em intrinsic
    volumes}, which occur as the coefficients of the monomials in the
  \emph{Steiner formula}
  \begin{equation} \label{14-Form_Steiner} V_{n} (K + \epsilon B^{n})
    = \sum_{j = 0}^{n} \kappa_{n - j} V_{j} (K) \epsilon^{n - j},
  \end{equation}
  for a convex body $K \in \cK^{n}$ and $\epsilon \geq 0$
  (cf. \cite[(1.16)]{14-KidVed16}); here, as usual, $+$ denotes the Minkowski
  addition in $\R^{n}$ and $\kappa_{n}$ is the volume of
	the Euclidean unit ball $B^{n}$  in $\R^{n}$.  Properties of the
  $V_i$ such as continuity, isometry invariance and additivity are
  derived from corresponding properties of the volume.  A key result
  for the intrinsic volumes is \emph{Hadwiger's characterization
    theorem} (see \cite[2. Satz]{14-Hadwiger52} and \cite[Theorem~1.23]{14-KidVed16}
), which states that $V_{0}, \ldots, V_{n}$
  form a basis of the vector space of continuous and isometry
  invariant real-valued valuations on $\cK^{n}$.

  A natural way to extend the Crofton formula is to apply the
  integration over the affine Grassmannian $A(n, k)$ to functionals
  which generalize the intrinsic volumes. One of these generalizations
  concerns the class of continuous and isometry covariant
  $\T^{p}$-valued valuations on $\cK^{n}$, where $\T^{p}$ denotes the
  vector space of symmetric tensors of rank $p \in \N_{0}$ over
  $\R^n$.

  The $\T^{0}$-valued valuations are simply the well-known and
  extensively studied intrinsic volumes. For the $\T^{1}$-valued
  (i.e. vector-valued) valuations, Hadwiger and Schneider
  \cite[Hauptsatz]{14-HadSchn71} proved in 1971 a characterization
  theorem similar to the aforementioned real-valued case due to
  Hadwiger. In addition, they also established integral geometric
  formulae, including a Crofton formula \cite[(5.4)]{14-HadSchn71}. In
  1997, McMullen \cite{14-McMullen97} initiated a systematic
  investigation of this class of $\T^{p}$-valued valuations for
  general $p \in \N_0$.  Only two years later Alesker generalized
  Hadwiger's characterization theorem (see \cite[Theorem
  2.2]{14-Alesker99b} and \cite[Theorem~2.5]{14-KidVed16}) by showing that the
  vector space of continuous and isometry covariant $\T^{p}$-valued
  valuations on $\cK^{n}$ is spanned by the tensor-valued versions of
  the intrinsic volumes, the \emph{Minkowski tensors}
  $\MinTen{j}{r}{s}$, where $j, r, s \in \N_{0}$ and $j < n$,
  multiplied with suitable powers of the metric tensor in~$\R^{n}$.
  In 2008, Hug, Schneider and Schuster proved a set of Crofton
  formulae for these Minkowski tensors (see \cite[Theorem
  2.1--2.6]{14-HugSchnSchu08}).

  Localizations of the intrinsic volumes yield other types of
  generalizations. The \emph{support measures} are weakly continuous,
  locally defined and motion equivariant valuations on convex bodies
  with values in the space of finite measures on Borel subsets of
  $\R^{n} \times \Sn$, where $\Sn$ denotes the Euclidean unit sphere
  in $\R^{n}$. These are determined by a local version of
  \eqref{14-Form_Steiner}. Therefore, they are a crucial example of
  localizations of the intrinsic volumes. Furthermore, their marginal
  measures on Borel subsets of $\R^{n}$ are called \emph{curvature
    measures} and the ones on Borel subsets of $\Sn$ are called
  \emph{surface area measures}. In 1959, Federer \cite{14-Federer59}
  proved Crofton formulae for curvature measures, even in the more
  general setting of sets with positive reach. For~further details and
  references, see also \cite[Section~1.3]{14-KidVed16} and
  \cite[Section~1.5]{14-KidVed16}.  Certain Crofton formulae for support
  measures were proved by Glasauer in 1997
  \cite[Theorem~3.2]{14-Glasauer97}.

  The combination of Minkowski tensors and localizations leads to
  another generalization of the intrinsic volumes. This topic has been
  explored by Schneider \cite{14-Schneider13} and Hug and Schneider
  \cite{14-HugSchn14, 14-HugSchn16} in recent years. They introduced
  particular tensorial support measures, the \emph{generalized local
    Minkowski tensors}, and proved that they essentially span the
  vector space of isometry covariant and locally defined valuations on
  the space of convex polytopes $\cP^{n}$ with values in the
  $\T^{p}$-valued measures on $\cB(\R^{n} \times \Sn)$ (see \cite[Section~2.4]{14-KidVed16}).
	Under the additional assumption of weak
  continuity they extended this result to valuations on $\cK^{n}$; a
  summary of the required arguments is given in \cite[Section~2.5]{14-KidVed16}.

  The aim of the present article is to prove a set of Crofton formulae
  for similar functionals, which are localized in $\R^{n}$, the
  \emph{tensorial curvature measures} or \emph{tensor-valued curvature measures}.
	Here we first focus on the case where
  the tensorial curvature measures of the intersection of the given
  body with an affine flat are defined with respect to the affine flat
  as the ambient space (intrinsic viewpoint). In a second step, we
	demonstrate how the arguments
  can be extended to the case where the curvature measures are
  considered in $\R^n$ (extrinsic viewpoint).  The current approach combines main ideas of
  the previous works \cite{14-HugSchnSchu08} and
  \cite{14-HugSchn14} and also links it to \cite{14-BernHug15}.
	A major advantage of the localization is that
  it naturally leads to a suitable choice of local tensor-valued
  measures for which the constants in the Crofton formulae are
  reasonably simple. From the general local results, we finally deduce
  various special consequences for the total measures, which are the
  Minkowski tensors that have been studied in
  \cite{14-HugSchnSchu08}. For the latter, we restrict ourselves to
  the translation invariant case, which simplifies the involved
  constants, but the general case can be treated similarly.
	In the case of the results for the extrinsic tensorial Crofton formulae,
	the connection to the approach in \cite{14-BernHug15} via the methods of
	algebraic integral geometry is used and deepened, but this interplay
	will have to be explored further in future work.
	
	The structure of this contribution is as follows. In Section \ref{14-sce2}, we
	fix our notation and
	collect various auxiliary results which will be needed. Section \ref{14-sec3}
	contains the main results. We first state our findings for intrinsic tensorial
	curvature measures, then discuss some special cases and finally explain
	the extension to extrinsic tensorial curvature measures. The proofs of the results
	for the intrinsic case are given in Section \ref{14-sec4}. Section \ref{14-sec5new}
	contains the arguments in the extrinsic setting. Some auxiliary results on sums
	of Gamma functions are provided in the final section.

  \section{Some Basic Tools}\label{14-sce2}

  In the following, we work in the $n$-dimensional Euclidean space
  $\R^{n}$, equipped with its usual topology generated by the standard
  scalar product $\cdot$ and the corresponding Euclidean norm
  $\norm{\nularg}$. Recall that the unit ball centered at the origin is denoted by $B^n$,
	its boundary (the unit sphere) is denoted by $\Sn$.
	For a topological space $X$, we denote the Borel
  $\sigma$-algebra on $X$ by $\cB(X)$.

  By $G(n, k)$, for $k \in \{0, \ldots, n\}$, we denote the
  Grassmannian of $k$-dimensional linear subspaces in $\R^{n}$, and we
  write $\nu_k$ for the (rotation invariant) Haar probability measure
  on $G(n, k)$. The directional space of an affine $k$-flat $E \in
  A(n, k)$ is denoted by $L(E) \in G(n, k)$, its orthogonal complement
  by $E^{\perp} \in G(n, n - k)$, and the translate of $E$ by a vector
  $t \in \R^{n}$ is denoted by $E_{t} := E + t$. For $k \in \{0,
  \ldots, n\}$, $l \in \{0, \ldots k\}$ and $F \in G(n, k)$, we define
  $G(F, l) := \{ L \in G(n, l): L \subset F \}$.  On $G(F, l)$ there
  exists a unique Haar probability measure $\nu^{F}_l$ invariant under
  rotations of $\R^n$ mapping $F$ into itself and leaving $F^{\perp}$
  pointwise fixed. The orthogonal projection of a vector $x \in
  \R^{n}$ to a linear subspace $L$ of $\R^{n}$ is denoted by
  $p_{L}(x)$ and its direction by $\pi_{L}(x) \in \Sn$, if $x \notin
  L^{\perp}$. For two linear subspaces $L, L'$ of $\R^{n}$, the
  generalized sine function $[L, L']$ is defined as follows. One
  extends an orthonormal basis of $L \cap L'$ to an orthonormal basis
  of $L$ and to one of $L'$. Then $[L, L']$ is the volume of the
  parallelepiped spanned by all these vectors.

  The vector space of symmetric tensors of rank $p \in \N_{0}$ over
  $\R^n$ is denoted by~$\T^{p}$. The symmetric tensor product of two
  vectors $x, y \in \R^{n}$ is denoted by $xy$ and the $p$-fold tensor
  product of a vector $x \in \R^{n}$ by $x^{p}$. Identifying $\R^{n}$
  with its dual space via its scalar product, we interpret a symmetric
  tensor $a \in \T^{p}$ as a symmetric $p$-linear map from
  $(\R^{n})^{p}$ to $\R$. One special tensor is the \emph{metric
    tensor} $Q \in \T^{2}$, defined by $Q(x, y) := x \cdot y$ for $x,
  y \in \R^{n}$. For an affine $k$-flat $E \in A(n, k)$, $k \in \{0,
  \ldots, n\}$, the metric tensor $Q(E)$ in $E$ is defined by $Q(E)(x,
  y) := p_{L(E)} (x) \cdot p_{L(E)} (y)$ for $x, y \in \R^{n}$.

  Defining the tensorial curvature measures requires some preparation
  (see also \cite[Section~1.3]{14-KidVed16}). For a convex body $K \in \cK^{n}$,
  we call the pair $(x, u) \in \R^{2n}$ a \emph{support element}
  whenever $x$ is a boundary point of $K$ and $u$ is an outer unit
  normal vector of $K$ at $x$. The set of all these support elements
  of $K$ is denoted by $\Nor K \subset \Sigma^{n} := \R^{n} \times
  \Sn$ and called the \emph{normal bundle} of $K$.
	For $x \in \R^{n}$, we denote the
  metric projection of $x$ onto $K$ by $p(K, x)$, and define $u(K, x)
  := (x - p(K, x)) / \norm{x - p(K, x)}$ for $x \in \R^{n} \setminus
  K$, the unit vector pointing from $p(K, x)$ to $x$. For $\epsilon >
  0$ and a Borel set $\eta \subset \Sigma^{n}$,
  \begin{equation*}
    M_{\epsilon}(K, \eta) := \left\{ x \in \left( K + \epsilon B^{n} \right) \setminus K \colon \left( p(K, x), u(K, x) \right) \in \eta \right\}
  \end{equation*}
  is a local parallel set of $K$ which satisfies a \emph{local Steiner
    formula}
  \begin{equation} \label{14-Form_Steiner_loc} V_{n} (M_{\epsilon}(K,
    \eta)) = \sum_{j = 0}^{n - 1} \kappa_{n - j} \Lambda_{j} (K, \eta)
    \epsilon^{n - j}, \qquad \epsilon \geq 0.
  \end{equation}
  This relation determines the \emph{support measures} $\Lambda_{0}
  (K, \cdot), \ldots, \Lambda_{n - 1} (K, \cdot)$ of $K$, which are
  finite Borel measures on $\cB (\Sigma^{n})$. Obviously, a comparison
  of \eqref{14-Form_Steiner} and \eqref{14-Form_Steiner_loc} yields
  $V_{j}(K) = \Lambda_{j} (K, \Sigma^{n})$.

  Now, for a convex body $K \in \cK^{n}$, a Borel set $\beta \in
  \cB(\R^{n})$ and $j, r, s \in \N_{0}$, the \emph{tensorial curvature
    measures} are given by
  \begin{equation*}
    \TenCM{j}{r}{s}{0} (K, \beta):= \omega_{n - j} \int _{\beta \times \Sn} x^r u^s \, \Lambda_j(K, \intd (x, u)),
  \end{equation*}
  for $j\in\{0,\ldots,n-1\}$, where $\omega_{n}$ denotes the
  $n-1$-dimensional volume of $\Sn$, and by
  \begin{equation*}
    \TenCM{n}{r}{0}{0} (K, \beta): = \int _{K \cap \beta} x^r \, \cH^{n}(\intd x).
  \end{equation*}
  If $K \subset E \in A(n, k)$ with $j < k \leq n$, we denote the
  $j$-th support measure of $K$ defined with respect to $E$ as the
  ambient space by $\Lambda^{(E)}_j(K, \cdot)$, which is a Borel
  measure on $\cB(\R^{n} \times (L(E) \cap \Sn))$, concentrated on
  $\Sigma^{(E)}:= E \times (L(E) \cap \Sn)$ with $L(E) \in G(n, k)$
  being the linear subspace parallel to $E$.  Then, we define the
  intrinsic tensorial curvature measures
  \begin{equation*}
    \IntTenCM{j}{r}{s}{0}{E} (K, \beta): = \omega_{k - j} \int _{\beta \times (L(E) \cap \Sn)} x^r u^s \, \Lambda^{(E)}_j(K, \intd (x, u))
  \end{equation*}
  and
  \begin{equation*}
    \IntTenCM{k}{r}{0}{0}{E} (K, \beta) := \int _{K \cap \beta} x^r \, \cH^{k}(\intd x).
  \end{equation*}
  For the sake of convenience, we extend the definition by
  $\TenCM{j}{r}{s}{0} := 0$ (resp. $\IntTenCM{j}{r}{s}{0}{E} := 0$)
  for $j \notin \{ 0, \ldots, n \}$ (resp. $j \notin \{ 0, \ldots, k
  \}$) or $r \notin \N_{0}$ or $s \notin \N_{0}$ or $j = n$ (resp. $j
  = k$) and $s \neq 0$. We adopt the same convention for the Minkowski
  tensors and the generalized tensorial curvature measures introduced
  below.

  The tensorial curvature measures are natural local versions of the
  \emph{Minkowski tensors}. For a convex body $K \in \cK^{n}$ and $j,
  r, s \in \N_{0}$, the latter are just the total measures
  $\smash{\MinTen{j}{r}{s}} (K) := \smash{\TenCM{j}{r}{s}{0}} (K,
  \R^{n})$ and, if $K \subset E \in A(n, k)$, an intrinsic version is
  given by $\smash{\IntMinTen{j}{r}{s}{E}} (K) :=
  \smash{\IntTenCM{j}{r}{s}{0}{E}} (K, \R^{n})$. These definitions of
  the Minkowski tensors differ slightly from the ones commonly used in
  the literature, as we slightly change the usual normalization
  (compare with the normalization used in \cite[Definition~2.1]{14-KidVed16}). The purpose of this change is to
  simplify the presentation of the main results of this article (and
  of future work).

  For a polytope $P \in \cP^{n}$ and $j \in \{ 0, \ldots, n \}$, we
  denote the set of $j$-dimensional faces of $P$ by $\cF_{j}(P)$ and
  the normal cone of $P$ at a face $F \in \cF_{j}(P)$ by $N(P,F)$. For
  a polytope $P \in \cP^{n}$ and a Borel set $\eta\subset\Sigma$, the $j$-th support measure is explicitly
  given by
  \begin{equation*}
    \Lambda_{j} (P, \eta) = \frac {1} {\omega_{n - j}}
    \sum_{F \in \cF_{j}(P)} \int _{F} \int _{N(P,F) \cap \Sn}
    \1_\eta(x,u)
    \, \cH^{n - j - 1} (\intd u) \, \cH^{j} (\intd x)
  \end{equation*}
  for $j\in\{0,\ldots,n-1\}$.  For $\beta \in \cB(\R^n)$, this yields
  \begin{equation*}
    \TenCM{j}{r}{s}{0} (P, \beta) = \sum_{F \in \cF_{j}(P)} \int _{F \cap \beta} x^r \, \cH^{j}(\intd x) \int _{N(P,F) \cap \Sn} u^{s} \, \cH^{n - j - 1} (\intd u)
  \end{equation*}
  and, if $P \subset E \in A(n, k)$ and $j < k \leq n$,
  \begin{equation*}
    \IntTenCM{j}{r}{s}{0}{E} (P, \beta) = \sum_{F \in \cF_{j}(P)} \int _{F \cap \beta} x^r \, \cH^{j}(\intd x) \int _{N_{E}(P,F) \cap \Sn} u^{s} \, \cH^{k - j - 1} (\intd u),
  \end{equation*}
  where $N_{E}(P, F) = N(P, F) \cap L(E)$ is the normal cone of $P$ at
  the face $F$, taken with respect to the subspace $L(E)$.  Of course,
  analogous representations are obtained for the (global) intrinsic
  and extrinsic Minkowski tensors.

  The Crofton formulae, which are stated in the next section, will
  naturally also involve the \emph{generalized tensorial curvature
    measures} (see \cite[(2.28)]{14-KidVed16})
  \begin{equation*}
    \TenCM{j}{r}{s}{1} (P, \beta) := \sum_{F \in \cF_{j}(P)} Q(F) \int _{F \cap \beta} x^r \, \cH^{j}(\intd x) \int _{N(P,F) \cap \Sn} u^{s} \, \cH^{n - j - 1} (\intd u),
  \end{equation*}
  for $j \in\{1,\ldots,n-1\}$, and, if $P \subset E \in A(n, k)$ and
  $0 < j < k \leq n$,
  \begin{equation*}
    \IntTenCM{j}{r}{s}{1}{E} (P, \beta) := \sum_{F \in \cF_{j}(P)} Q(F)
		\int _{F \cap \beta} x^r \, \cH^{j}(\intd x) \int _{N_{E}(P,F)
		\cap \Sn} u^{s} \, \cH^{k - j - 1} (\intd u).
  \end{equation*}

  Due to Hug and Schneider \cite{14-HugSchn14} there exists a weakly
  continuous extension of the generalized tensorial curvature measures
  to $\cK^{n}$. In fact, they proved such an extension for the
  generalized local Minkowski tensors, which are measures on
  $\cB(\R^{n} \times \Sn)$. Globalizing this in the $\Sn$-coordinate
  yields the result for the tensorial curvature measures.

  Apart from the easily verified relation
  \begin{align} \label{14-Form_Rel_tenCM} \TenCM{n - 1}{r}{s}{1} = Q
    \TenCM{n - 1}{r}{s}{0} - \TenCM{n - 1}{r}{s + 2}{0},
  \end{align}
  the tensorial curvature measures and the generalized tensorial
  curvature measures are linearly independent.  In contrast, McMullen
  \cite{14-McMullen97} discovered basic linear relations for the
  (gobal) Minkowski tensors (see also \cite[Theorem~2.6]{14-KidVed16}), and it
  was shown in \cite{14-HSS07a} that these are essentially all linear
  dependences between the Minkowski tensors (see also \cite[Theorem~2.7]{14-KidVed16}). Furthermore, McMullen
  \cite[p.~269]{14-McMullen97} found relations for the global
  counterparts of the generalized tensorial curvature measures. In
  fact, the globalized form of \eqref{14-Form_Rel_tenCM} is a very
  special example of one of these relations. For the translation
  invariant Minkowski tensors $\MinTenZ{j}{0}{s} $, these relations
  take a very simple form, nevertheless for our purpose they are
  essential in the proof of Theorem~\ref{14-Thm_Main_Gen}. To have a
  short notation for these translation invariant Minkowski tensors, we
  omit the first superscript and put
  \begin{equation*}
    \MinTen{j}{0}{s} := \MinTenZ{j}{0}{s}, \qquad \IntMinTen{j}{0}{s}{E}
		:= \IntMinTenZ{j}{0}{s}{E}.
  \end{equation*}
  Then we can state the following very special case of McMullen's
  relations.

  \begin{lemma}[McMullen] \label{14-Lem_McMullen} Let $P \in \cP^n$
    and $j, s \in \N_0$ with $j \leq n - 1$. Then
    \begin{equation*}
      \tfrac {n - j + s} {s + 1} \MinTen{j}{0}{s + 2}(P)
      = \sum_{F \in \cF_j (P)} Q (F^{\perp}) \cH^{j} (F) \int _{N(P,F)
        \cap \Sn} u^{s}
      \, \cH^{n - j - 1} (\intd u).
    \end{equation*}
  \end{lemma}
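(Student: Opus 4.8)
The plan is to reduce the identity to a pointwise statement about the integral $\int_{N(P,F)\cap\Sn} u^s\,\cH^{n-j-1}(\intd u)$ over a single normal cone, using the orthogonal decomposition $Q = Q(F) + Q(F^{\perp})$ associated with a face $F\in\cF_j(P)$. Indeed, for such a face the normal cone $N(P,F)$ is a cone contained in $F^{\perp}$, so every unit vector $u$ in it lies in $F^{\perp}$, and the metric tensor decomposes as $Q = Q(F)+Q(F^{\perp})$ where $Q(F)$ is the metric tensor of the $j$-dimensional space $L(F)$ (the translate of $\aff F$ through the origin) and $Q(F^{\perp})$ is that of the $(n-j)$-dimensional orthogonal complement. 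This is exactly the decomposition appearing on the left via $\TenCM{j}{0}{s}{1}$ after globalizing, since $Q(F)$ there is the metric tensor of the face. So I would start by writing $Q(F^{\perp}) = Q - Q(F)$ under the sum and splitting the right-hand side into a $Q$-term and a $Q(F)$-term.

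The $Q(F)$-term is, by definition, precisely $\MinTen{j}{0}{s}$ with an extra tensor factor $Q(F)$ — that is, it equals the globalized generalized tensorial curvature measure $\TenCM{j}{0}{s}{1}(P,\R^n)$. For the $Q$-term, the factor $Q$ pulls out of the sum, leaving $Q\cdot\MinTen{j}{0}{s}(P)$. Thus the claimed identity is equivalent to
\begin{equation*}
  \tfrac{n-j+s}{s+1}\,\MinTen{j}{0}{s+2}(P) = Q\,\MinTen{j}{0}{s}(P) - \TenCM{j}{0}{s}{1}(P,\R^n),
\end{equation*}
which is exactly the globalized form of McMullen's relation alluded to in the text just before the lemma (and of which \eqref{14-Form_Rel_tenCM} is the case $j=n-1$). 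So the real content is the integral-geometric identity on a fixed normal cone $C := N(P,F)\cap\Sn \subset F^{\perp}\cap\Sn$, namely
\begin{equation*}
  \tfrac{n-j+s}{s+1}\int_{C} u^{s+2}\,\cH^{n-j-1}(\intd u) = \big(Q - Q(F)\big)\int_{C} u^{s}\,\cH^{n-j-1}(\intd u) = Q(F^{\perp})\int_{C} u^{s}\,\cH^{n-j-1}(\intd u).
\end{equation*}

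To prove this pointwise identity I would argue within the $(n-j)$-dimensional space $F^{\perp}$, where $C$ is a spherically convex subset of the unit sphere $S^{n-j-1}$ and $Q(F^{\perp})$ plays the role of the full metric tensor. The standard device is to relate the surface integral of $u^{s+2}$ over $C$ to a volume integral over the cone $\bigcup_{t\ge 0} tC$ intersected with the unit ball, via polar coordinates in $F^{\perp}$: one has $\int_{\text{cone}\cap B} x^{s}\,\cH^{n-j}(\intd x) = \frac{1}{n-j+s}\int_C u^{s}\,\cH^{n-j-1}(\intd u)$. Then I would apply the divergence theorem (or integration by parts) in $F^{\perp}$ to the vector field $x\mapsto \norm{x}^{?}\,x^{s+1}\otimes\cdots$ — more precisely, to components of $x^{s+1}$ — over the truncated cone; the boundary of the truncated cone consists of the spherical cap $C$ (with outer normal $u$) and the conical part (whose outer normal is orthogonal to the position vector, hence contributes nothing to the relevant flux). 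Matching the boundary term on $C$ against the divergence computed in the interior produces the factor $(n-j+s)/(s+1)$ and converts one factor $u$ of the position direction on the cap into the metric tensor $Q(F^{\perp})$ via the identity $\int u\otimes(\text{tangential part}) = \ldots$; carrying out this symmetric-tensor bookkeeping is the step where one must be careful.

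The main obstacle is precisely this last bookkeeping: correctly tracking the symmetric tensor products through the divergence theorem so that the scalar constant comes out as $(n-j+s)/(s+1)$ and the residual tensor factor is exactly $Q(F^{\perp})$ and not some other combination. A clean way to sidestep the messiest version is to test both sides against an arbitrary vector $v\in\R^n$ to the $(s+2)$-nd power, reducing the tensor identity to the scalar identity $\tfrac{n-j+s}{s+1}\int_C (v\cdot u)^{s+2}\,\cH^{n-j-1}(\intd u) = \norm{p_{F^{\perp}}v}^2\int_C(v\cdot u)^{s}\,\cH^{n-j-1}(\intd u)$ for all $v$, and then prove that by the polar-coordinate/divergence argument applied to the scalar field $x\mapsto (v\cdot x)^{s+1}(v\cdot p_{F^{\perp}}x)$ on the truncated cone. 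Once the pointwise identity is established, summing over $F\in\cF_j(P)$ with the weights $\cH^j(F)$ and reassembling $Q(F^{\perp}) = Q - Q(F)$ gives the lemma; alternatively, since this is stated as McMullen's relation, one may simply cite \cite{14-McMullen97} for the global tensor identity and supply only the short reduction in the first two paragraphs.
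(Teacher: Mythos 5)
Your opening reduction is fine: writing $Q(F^{\perp}) = Q - Q(F)$ and recognizing the two resulting sums as $Q\,\MinTen{j}{0}{s}(P)$ and $\TenCM{j}{0}{s}{1}(P,\R^n)$ correctly recasts the lemma as a global McMullen relation, and the paper itself offers no more than a citation to \cite{14-McMullen97} together with the remark that the result is ``essentially a global'' consequence of the divergence theorem; so your fallback of citing McMullen after this reduction is acceptable. However, your proposed self-contained proof breaks down at its central step: the ``pointwise'' identity
\begin{equation*}
  \tfrac{n-j+s}{s+1}\int_{C} u^{s+2}\,\cH^{n-j-1}(\intd u) \;=\; Q(F^{\perp})\int_{C} u^{s}\,\cH^{n-j-1}(\intd u),
  \qquad C = N(P,F)\cap\Sn,
\end{equation*}
is false for an individual normal cone. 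Take $n=2$, $j=0$, $s=0$ and let $F$ be a vertex of a square, so that $C$ is a quarter circle, say $\{(\cos\theta,\sin\theta):0\le\theta\le\pi/2\}$. The left-hand side is $2\int_C u^2\,\cH^1(\intd u)$, whose off-diagonal entry equals $2\int_0^{\pi/2}\cos\theta\sin\theta\,\intd\theta = 1\neq 0$, whereas the right-hand side $\tfrac{\pi}{2}\,Q$ is diagonal. The identity only becomes true after summing over all $F\in\cF_j(P)$ with the weights $\cH^j(F)$ (for the square the four off-diagonal contributions cancel), which is exactly the sense in which the lemma is a \emph{global} statement.

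The source of the error is your claim that the lateral (conical) part of the boundary of the truncated cone ``contributes nothing to the relevant flux.'' That is true only for radial vector fields $x\mapsto f(x)\,x$, which yield the harmless polar-coordinate identity relating the cone integral to the spherical one; but to convert $u^{s+2}$ into $Q(F^{\perp})\,u^{s}$ you must integrate a non-radial field (for instance $x\mapsto (v\cdot x)^{s+1}v$ in your scalar test), and its flux through the lateral faces of $N(P,F)$ does not vanish. These lateral contributions cancel only when the argument is carried out globally over all normal cones (equivalently, over the whole boundary of $P$), which is precisely why McMullen's relations couple different faces and cannot be established cone by cone. So either run the divergence argument globally, or stop after your first reduction and cite \cite{14-McMullen97}, as the paper does.
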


  Note that this lemma is essentially a global result which is derived
  by applying a version of the divergence theorem.

  \section{Crofton Formulae}\label{14-sec3}

  In this article, for $0 \leq j \leq k < n$ and $i, s \in \N_0$, we
  are first concerned with the Crofton integrals
  \begin{equation} \label{14-Form_Main_Loc} \int_{A(n, k)} Q(E)^i \,
    \IntTenCM{j}{r}{s}{0}{E} (K \cap E, \beta \cap E) \, \mu_k(\intd
    E),
  \end{equation}
  which involve the intrinsic tensorial curvature measures, and the
  Crofton integrals
  \begin{equation} \label{14-Form_Main} \int_{A(n, k)} Q(E)^i \,
    \IntMinTen{j}{0}{s}{E} (K \cap E) \, \mu_k(\intd E)
  \end{equation}
  for the global versions of the translation invariant intrinsic
  tensorial curvature measures, the translation invariant intrinsic
  Minkowski tensors obtained by setting $r=0$. In the global case, we
  restrict our investigations mainly to these translation invariant
  intrinsic Minkowski tensors, general Crofton formulae have already
  been established in \cite{14-HugSchnSchu08}.
	
	Using the
  simplifications of the formulae obtained in the present work, the
  extrinsic formulae in \cite{14-HugSchnSchu08}, that is, Crofton formulae for
	the integrals
	\begin{equation} \label{14-Form_Main2} \int_{A(n, k)}
    \TenCM{j}{r}{s}{0} (K \cap E,\beta\cap E) \, \mu_k(\intd E)
  \end{equation}
	can be simplified
  accordingly.
	We explain this in detail in the case where $j=k-1$. The connection to
	\cite{14-BernHug15} turns out to be crucial for simplifying the constants if
	$s$ is odd. However, for even $s$
	the current approach works completely independently.

  \subsection{Crofton Formulae for Intrinsic Tensorial Curvature
    Measures}\label{14-sec3.1}

  In this section we state the formulae for the integrals given in
  \eqref{14-Form_Main_Loc} and \eqref{14-Form_Main}. We start with the
  local versions, where we distinguish the cases $j = k$ and $j < k$.

  \begin{theorem} \label{14-Thm_Main_j=k} Let $K \in \cK^n$, $\beta
    \in \cB(\R^n)$ and $i, k, r, s \in \N_0$ with $k < n$. Then
    \begin{align*}
      & \int_{A(n, k)} Q(E)^i \, \IntTenCM{k}{r}{s}{0}{E} (K \cap E,
      \beta \cap E) \, \mu_k(\intd E) = \frac {\Gamma(\frac {n}
        {2})\Gamma(\frac {k} {2} + i)} {\Gamma(\frac {n} {2} +
        i)\Gamma(\frac {k} {2})} \, Q^i \TenCM{n}{r}{0}{0} (K, \beta)
    \end{align*}
    if $s=0$; for $s\neq 0$ the integral is zero.
  \end{theorem}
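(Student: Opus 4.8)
The plan is to reduce the statement to the classical Crofton formula \eqref{14-Form_Croft_Classic} combined with an explicit integration over a fibre of $A(n,k)$. First I would observe that, by definition, $\IntTenCM{k}{r}{s}{0}{E}(K\cap E,\beta\cap E)$ vanishes identically whenever $s\neq 0$ (this is part of the convention adopted after the definition of the intrinsic tensorial curvature measures: for $j=k$ one only allows $s=0$), which immediately disposes of the case $s\neq 0$. So the content is entirely in the case $s=0$, where
\begin{equation*}
  \IntTenCM{k}{r}{0}{0}{E}(K\cap E,\beta\cap E)=\int_{K\cap E\cap\beta}x^r\,\cH^k(\intd x).
\end{equation*}

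Next I would disintegrate $\mu_k$ over $A(n,k)$ in the standard way: writing $E=L+t$ with $L=L(E)\in G(n,k)$ and $t\in L^\perp$, one has $\int_{A(n,k)}f(E)\,\mu_k(\intd E)=\int_{G(n,k)}\int_{L^\perp}f(L+t)\,\cH^{n-k}(\intd t)\,\nu_k(\intd L)$. Since $Q(E)$ depends only on $L(E)$, the factor $Q(E)^i$ can be pulled out of the inner integral. Applying Fubini to the inner double integral $\int_{L^\perp}\int_{(K\cap(L+t))\cap\beta}x^r\,\cH^k(\intd x)\,\cH^{n-k}(\intd t)$ and recombining the $\cH^k$ on the flat with the $\cH^{n-k}$ transverse to it into $\cH^n$ on $\R^n$ (a special case of the fact that $\cH^n$ factors through any orthogonal splitting $\R^n=L\oplus L^\perp$), this inner integral collapses to $\int_{K\cap\beta}x^r\,\cH^n(\intd x)=\TenCM{n}{r}{0}{0}(K,\beta)$, independently of $L$. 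Hence the whole Crofton integral equals $\big(\int_{G(n,k)}Q(L)^i\,\nu_k(\intd L)\big)\,\TenCM{n}{r}{0}{0}(K,\beta)$.

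It then remains to evaluate the purely tensorial constant $\int_{G(n,k)}Q(L)^i\,\nu_k(\intd L)$ and to identify it with $\tfrac{\Gamma(n/2)\Gamma(k/2+i)}{\Gamma(n/2+i)\Gamma(k/2)}\,Q^i$. This is a classical computation: the map $L\mapsto Q(L)$ is $SO(n)$-equivariant, so its $\nu_k$-average is an $SO(n)$-invariant element of $\T^{2i}$, hence a scalar multiple of $Q^i$; the scalar is pinned down by contracting fully, i.e.\ by applying both sides to suitable arguments (or by taking the appropriate ``trace''), which reduces the problem to a one-dimensional integral of a power of $\lVert p_L(u)\rVert^2$ over the sphere, or equivalently to moments of a Beta-distributed random variable — this is exactly where the quotient of Gamma functions arises. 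I expect this constant-evaluation step to be the main (though still routine) obstacle, and I would either cite the relevant formula for $\int_{G(n,k)}Q(L)^i\,\nu_k(\intd L)$ from the literature (it appears in work on Minkowski tensors, e.g.\ in the references to \cite{14-HugSchnSchu08}) or derive it via the auxiliary Gamma-function identities promised for the final section. The interchange of integrals and the $\cH^n$-factorization are standard and need only a brief justification (finiteness of all measures involved, $K$ compact).
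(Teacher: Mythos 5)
Your proposal is correct and follows essentially the same route as the paper: the case $s\neq 0$ is disposed of by the convention (the paper writes this as the indicator $\1\{s=0\}$ in the definition of $\IntTenCM{k}{r}{s}{0}{L_t}$), and the case $s=0$ is handled by disintegrating $\mu_k$ over $G(n,k)\times L^\perp$, applying Fubini and the factorization of $\cH^n$, and then invoking the known value of $\int_{G(n,k)}Q(L)^i\,\nu_k(\intd L)$, which is exactly Lemma~\ref{14-Lem_Int_Form_1} (taken from \cite{14-HugSchnSchu08}).
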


  If $s = 0$ in Theorem~\ref{14-Thm_Main_j=k}, then we interpret the
  coefficient of the tensor on the right-hand side as $0$, if $k = 0$
  and $i \neq 0$, and as $1$, if $k = i = 0$.  A global version of
  Theorem~\ref{14-Thm_Main_j=k} is obtained by simply setting $\beta =
  \R^n$.

Next we turn to case $j<k$.

  \begin{theorem} \label{14-Thm_Main_Gen_Loc} Let $K \in \cK^n$,
    $\beta \in \cB(\R^n)$ and $i, j, k, r, s \in \N_0$ with $j < k <
    n$ and $k > 1$. Then
    \begin{align*}
      \MoveEqLeft[3]  \int_{A(n, k)} Q(E)^i \, \IntTenCM{j}{r}{s}{0}{E}
			(K \cap E, \beta \cap E) \, \mu_k(\intd E) \\
      ={}& \gamma_{n, k, j} \sum_{z = 0}^{\lfloor \frac s 2 \rfloor +
        i} Q^z \bigl( \lambda_{n, k, j, s, i, z}^{(0)} \, \TenCM{n - k
        + j}{r}{s + 2i - 2z}{0} (K, \beta) + \lambda_{n, k, j, s, i,
        z}^{(1)} \, \TenCM{n - k + j}{r}{s + 2i - 2z - 2}{1} (K,
      \beta) \bigr),
    \end{align*}
    where for $\varepsilon \in \{ 0, 1 \}$ we set
    \begin{align*}
      \gamma_{n, k, j} :={} & \binom{n - k + j - 1}{j}
      \frac{\Gamma(\frac {n - k + 1} 2)}{2 \pi},
      \\
      \lambda_{n, k, j, s, i, z}^{(\varepsilon)} := {} & \sum_{p =
        0}^{i} \sum_{q = (z - p + \varepsilon)^+}^{\lfloor \frac s 2
        \rfloor + i - p} (-1)^{p + q - z} \binom{i}{p} \binom{s + 2i -
        2p}{2q} \binom{p + q - \varepsilon}{z} \Gamma(q + \tfrac 1 2)
      \\
      & \times \frac {\Gamma(\frac {j + s} 2 + i - p - q + 1)}
      {\Gamma(\frac {n - k + j + s} 2 + i - p + 1)} \frac
      {\Gamma(\frac {k - 1} 2 + p) \Gamma(\frac {n - k} 2 + q)}
      {\Gamma(\frac {n + 1} 2 + p + q)} \vartheta_{n, k, j, p,
        q}^{(\varepsilon)},
      \\
      \vartheta_{n, k, j, p, q}^{(0)} := {} & (n - k + j) \big(\tfrac
      {k - 1} 2 + p\big), \qquad \vartheta_{n, k, j, p, q}^{(1)} :=
      p(n - k) - q(k - 1).
    \end{align*}
  \end{theorem}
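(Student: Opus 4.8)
The plan is to reduce the Crofton integral over $A(n,k)$ to a two-stage integration: first over the directional subspaces $L\in G(n,k)$, then over translations within a fixed flat, exactly as in the approach of \cite{14-HugSchnSchu08} combined with the localization techniques of \cite{14-HugSchn14}. Since both sides are weakly continuous in $K$ and both the tensorial and the generalized tensorial curvature measures are determined by their values on polytopes (by the extension result of Hug and Schneider cited above), it suffices to establish the identity for a polytope $P\in\cP^n$. For a polytope one has the explicit face-sum representation of $\IntTenCM{j}{r}{s}{0}{E}(P\cap E,\beta\cap E)$; moreover, for $\mu_k$-almost every $E$ the intersection $P\cap E$ is a polytope whose $j$-faces are exactly the intersections $F\cap E$ with $F\in\cF_m(P)$ for suitable $m\ge j$, and the normal cone $N_E(P\cap E, F\cap E)$ is the image of $N(P,F)$ under orthogonal projection onto $L(E)$. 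Plugging this in and using the translative integral-geometric decomposition $\mu_k = \int_{G(n,k)}\int_{L^\perp}(\,\cdot\,)\,\cH^{n-k}(\intd t)\,\nu_k(\intd L)$ turns the left-hand side into a sum over faces $F$ of $P$ of integrals of the form
\begin{equation*}
  \int_{G(n,k)} [L(F),L]^{?}\, Q(L)^i \int_{(N(P,F)\cap\Sn)\text{-part}} \bigl(p_L u\bigr)^{s}\,\cdots\, \nu_k(\intd L),
\end{equation*}
after the $\beta$-integration over $F$ has been separated out (it contributes the factor $\int_{F\cap\beta}x^r\,\cH^{j}(\intd x)$, which is precisely what appears, with the right dimension count, on the right-hand side in $\TenCM{n-k+j}{r}{\cdots}{\cdot}(P,\beta)$). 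The combinatorial bookkeeping of which faces survive the intersection with a generic $k$-flat produces the binomial factor $\binom{n-k+j-1}{j}$ in $\gamma_{n,k,j}$.

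The heart of the matter is then the evaluation of the resulting Grassmannian integrals: integrals over $G(n,k)$ of powers of the metric tensor $Q(L)$ of the subspace, tensored with powers of the projection $p_L v$ of a fixed unit vector $v$, weighted by powers of generalized sine functions $[L(F),L]$ and $[N(P,F),L^\perp]$-type factors. These are exactly the integrals computed in \cite{14-HugSchnSchu08} (and refined in \cite{14-BernHug15}); each such integral evaluates to a linear combination of $Q^z$ times $v^{s+2i-2z}$ with coefficients that are products and quotients of Gamma functions coming from the Beta-type integrals $\int_{\Sn}|u\cdot v|^{a}\,\cdots$ and from integration over flag manifolds. Carrying out the two inner expansions — the binomial expansion of $Q(L)^i = (Q - Q(L^\perp))^i$ to pull out the ambient $Q$, and the expansion of $(p_L v)^s$ in terms of $Q(L)$, $Q(L^\perp)$ and $v$ — and collecting terms by the total power $z$ of $Q$ and by whether a factor $Q(F)$ (i.e.\ the generalized measure with superscript $1$) or not (superscript $0$) is produced, yields the double sum over $p,q$ in $\lambda^{(\varepsilon)}_{n,k,j,s,i,z}$: $p$ indexes the split of the $i$ copies of $Q(E)$, $q$ indexes the split of the $s$ copies of $u$, and the constraint $q\ge (z-p+\varepsilon)^+$ records that at least $z$ copies of $Q$ must be assembled from the $p+q$ available "raw" copies (minus one in the $\varepsilon=1$ case, since one copy of $Q(F)$ is already displayed). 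The signs $(-1)^{p+q-z}$ are the inclusion–exclusion signs from $Q(L^\perp)=Q-Q(L)$ substitutions.

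At the last step, the raw output for the generalized part would be written in terms of a sum over faces $F$ of $Q(F^\perp)\,\cH^{j}(F)\int_{N(P,F)\cap\Sn}u^{s'}\,\cH^{n-j'-1}(\intd u)$, and here is where Lemma~\ref{14-Lem_McMullen} (McMullen's relation) is invoked to re-express that quantity in terms of $\MinTen{j'}{0}{s'+2}$, equivalently in terms of $\TenCM{n-k+j}{r}{\cdot}{1}$ and $\TenCM{n-k+j}{r}{\cdot}{0}$ localized over $\beta$; this is exactly the point flagged before Lemma~\ref{14-Lem_McMullen} as essential. The bookkeeping after this substitution splits the $\vartheta^{(0)}$ and $\vartheta^{(1)}$ contributions, with $\vartheta^{(0)}_{n,k,j,p,q}=(n-k+j)(\tfrac{k-1}{2}+p)$ and $\vartheta^{(1)}_{n,k,j,p,q}=p(n-k)-q(k-1)$ emerging as the coefficients produced when the McMullen identity converts $Q(F^\perp)$-weighted face sums into $Q(F)$-weighted ones (the $n-k+j$ being the dimension of $F$ in the intersected flat's complement bookkeeping). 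I expect the main obstacle to be purely computational: assembling the Grassmannian/flag-manifold integral values and then reorganizing the resulting quadruple sum into the stated closed form with the correct Gamma-function constants and the correct summation ranges — in particular verifying that the terms with $z>\lfloor s/2\rfloor+i$ cancel and that the lower limit $(z-p+\varepsilon)^+$ is the sharp one. The auxiliary identities on sums of Gamma functions promised in the final section of the paper will be needed precisely to collapse the intermediate expressions into the displayed $\lambda^{(\varepsilon)}_{n,k,j,s,i,z}$.
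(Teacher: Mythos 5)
Your overall skeleton --- reduction to polytopes plus weak-continuity approximation, the decomposition $\mu_k=\int_{G(n,k)}\int_{L^\perp}\cdots$, and the evaluation of a Grassmannian integral of $Q(L)^i$ times powers of the projected normal weighted by generalized sine functions --- is exactly the paper's route. The translative step you leave vague (your ``$[L(F),L]^{?}$'') is supplied by Rataj's formula (Lemma~\ref{14-Croft_Form_Rataj}), which produces the precise weight $\norm{p_L(u)}^{j-k}[F,L]^2$; the core computation is then Proposition~\ref{14-Prop_Int_Form}, evaluating $\int_{G(n,k)}Q(L)^i\pi_L(u)^s\norm{p_L(u)}^{j-k}[F,L]^2\,\nu_k(\intd L)$ via Lemma~\ref{14-Lem_Transf_Form}, Lemma~\ref{14-Lem_Int_Form_2} and the Gamma-function identities of the last section. (Incidentally, the factor $\binom{n-k+j-1}{j}$ in $\gamma_{n,k,j}$ comes out of that Gamma-function bookkeeping, not from counting which faces survive a generic intersection: all faces in $\cF_{n-k+j}(P)$ contribute.)

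The genuine gap is your final assembly step. You propose to invoke Lemma~\ref{14-Lem_McMullen} to convert $Q(F^\perp)$-weighted face sums into the generalized measures $\TenCM{n-k+j}{r}{\cdot}{1}$, and you attribute the coefficients $\vartheta^{(0)},\vartheta^{(1)}$ to that conversion. This cannot work here and is not what happens. McMullen's relation is a \emph{global} identity (it involves the total face volumes $\cH^{j}(F)$ and is derived from the divergence theorem, as the paper remarks), so it is incompatible with the local weight $\1_\beta(x)\,x^r$ present in $\TenCM{n-k+j}{r}{\cdot}{1}(P,\beta)$; moreover it outputs ordinary Minkowski tensors $\MinTen{j}{0}{s+2}$, not generalized ones, and locally the generalized tensorial curvature measures are linearly independent of the $\TenCM{\cdot}{r}{\cdot}{0}$ (except when $j=n-1$), so no such local elimination exists --- which is precisely why the generalized measures appear in the statement you are proving. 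In the actual proof the $Q(F)$ factor, and hence the $\TenCM{n-k+j}{r}{\cdot}{1}$ term carrying $\vartheta^{(1)}$, is produced \emph{directly} by the Grassmannian integral $\int_{G(n,k)}[F,L]^2Q(L)^i\,\nu_k(\intd L)$ of Lemma~\ref{14-Lem_Int_Form_2}, whose value contains a summand proportional to $Q^{i-1}Q(F)$; the face sum weighted by $Q(F)$ is then the generalized measure by definition. McMullen's lemma enters only in the proof of the global Theorem~\ref{14-Thm_Main_Gen}, exactly to remove those generalized tensors from the translation-invariant total measures. As written, your argument would stall at the point where a global identity is applied to a local quantity.
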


  If $j=k-1$, then the tensorial curvature measures and the
  generalized tensorial curvature measures are linearly dependent. In
  this case, the right-hand side can be expressed as a linear
  combination of the tensor-valued curvature measures $Q^z\TenCM{n -
    1}{r}{s + 2i - 2z}{0} (K, \nularg )$, whereas the measures
  $Q^z\TenCM{n - 1}{r}{s + 2i - 2z}{1} (K, \nularg)$ are not
  needed. An explicit description of this case is given in
  Corollary~\ref{14-Cor_j=k-1} for $i=0$ and in \eqref{Form_CF_Main_j=k-1} for $i \in \N_{0}$.

  If the additional metric tensor is omitted as a weight function,
  that is in the case $i = 0=p$, then the coefficients $\lambda_{n, k,
    j, s, 0, z}^{\smash{(\varepsilon)}}$ in
  Theorem~\ref{14-Thm_Main_Gen_Loc} simplify to a single sum.

  Apparently, the coefficients in Theorem~\ref{14-Thm_Main_Gen_Loc}
  are not well defined in the (excluded) case $k = 1$ and $j=0$, as
  $\Gamma(0)$ is involved in the numerator of $\lambda_{n, 1, 0, s, i,
    z}^{\smash{(\varepsilon)}}$. Although this issue can be resolved
  by a proper interpretation of the (otherwise ambiguous) expression
  $\Gamma(p)\cdot p=\Gamma(p+1)$ as $1$ for $p=0$, we prefer to state
  and derive this case separately.  In fact, our analysis leads to
  substantial simplifications of the constants, as our next result
  shows.

  \begin{theorem} \label{14-Thm_Main_k=1} Let $K \in \cK^n$, $\beta
    \in \cB(\R^n)$ and $i, r, s \in \N_0$. Then
    \begin{align*}
      \MoveEqLeft  \int_{A(n, 1)} Q(E)^i \, \IntTenCM{0}{r}{s}{0}{E} (K \cap E, \beta \cap E) \, \mu_1(\intd E) \\
      & = \frac {\Gamma( \frac n 2 ) \Gamma(\frac {s + 1} 2 + i)} {
        \pi \Gamma ( \frac {n + s + 1} 2 + i) } \sum_{z = 0}^{\frac
        {s} {2} + i} (-1)^{z} \binom{\frac s 2 + i}{z} \frac {1} {1 -
        2z} \, Q^{\frac s 2 + i - z} \TenCM{n - 1}{r}{2z}{0} (K,
      \beta)
    \end{align*}
    for even $s$.  If $s$ is odd, then
    \begin{align*}
      \MoveEqLeft  \int_{A(n, 1)} Q(E)^i \, \IntTenCM{0}{r}{s}{0}{E} (K \cap E, \beta \cap E) \, \mu_1(\intd E) = \frac {\Gamma(\frac {n} 2) \Gamma(\frac {s} 2 + i + 1)}
      {\sqrt \pi \Gamma(\frac {n + s + 1} 2 + i)} \, Q^{\frac {s - 1}
        2 + i} \TenCM{n - 1}{r}{1}{0} (K, \beta).
    \end{align*}
  \end{theorem}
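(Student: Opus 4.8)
The plan is to evaluate the integrand for one fixed line, then disintegrate the integration over $A(n,1)$ according to directions, and finally reduce the whole formula to a single integral over $\Sn$. It is cleanest to carry out the argument first for a polytope $P\in\cP^n$ and then pass to general $K\in\cK^n$ by weak continuity of the (generalized) tensorial curvature measures (Hug--Schneider), using dominated convergence for the integral over $A(n,1)$ (only flats meeting a fixed ball are relevant). So let $P\in\cP^n$. For $\mu_1$-almost every line $E$ the section $P\cap E$ is a non-degenerate segment $[a_E,b_E]$, where $b_E-a_E$ is a positive multiple of some fixed unit direction $v_E$ of $E$; its $0$-faces are the two endpoints, with normal cones in $L(E)$ equal to $\R_{\geq 0}(-v_E)$ at $a_E$ and $\R_{\geq 0}v_E$ at $b_E$. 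Since $\IntTenCM{0}{r}{s}{0}{E}(P\cap E,\nularg)$ is concentrated on $E$, the polytope formula for $\IntTenCM{0}{r}{s}{0}{E}$ gives
\[
  \IntTenCM{0}{r}{s}{0}{E}(P\cap E,\beta\cap E)=\1_\beta(a_E)\,a_E^r\,(-v_E)^s+\1_\beta(b_E)\,b_E^r\,v_E^s .
\]

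Next I would write $\int_{A(n,1)}f\,\mu_1(\intd E)=\int_{G(n,1)}\int_{L^\perp}f(L+x)\,\cH^{n-1}(\intd x)\,\nu_1(\intd L)$ and fix $L=\Span\{v\}$ (the sign of $v$ will be immaterial below). For $\cH^{n-1}$-almost every $x\in L^\perp$ the upper endpoint $b_{L+x}$ lies in the relative interior of a facet $F$ of $P$ with $u_F\cdot v>0$ ($u_F$ the outer unit normal of $F$), and $p_{L^\perp}$ restricted to such a facet is injective with constant Jacobian $\abs{u_F\cdot v}$; hence the $b_{L+x}$-part of the integrand integrates over $L^\perp$ to $\sum_{F\in\cF_{n-1}(P),\,u_F\cdot v>0}\abs{u_F\cdot v}\int_{F\cap\beta}y^r\,\cH^{n-1}(\intd y)$, and the $a_{L+x}$-part analogously over the facets with $u_F\cdot v<0$, now with the prefactor $(-v)^s$. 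Adding the two, a routine sign check shows that each facet $F$ contributes $v^sg_s(u_F\cdot v)$, where $g_s(t)=\abs t$ for even $s$ and $g_s(t)=t$ for odd $s$ (facets with $u_F\cdot v=0$ contribute nothing, as $g_s(0)=0$). Using $Q(L)^iv^s=v^{s+2i}$, $\TenCM{n-1}{r}{s'}{0}(P,\beta)=\sum_{F\in\cF_{n-1}(P)}u_F^{s'}\int_{F\cap\beta}y^r\,\cH^{n-1}(\intd y)$, and integrating over $G(n,1)$, the whole Crofton integral collapses to
\[
  \sum_{F\in\cF_{n-1}(P)}I(u_F)\int_{F\cap\beta}y^r\,\cH^{n-1}(\intd y),\qquad I(u):=\int_{G(n,1)}v^{s+2i}\,g_s(u\cdot v)\,\nu_1(\intd L).
\]
The integrand defining $I(u)$ does not depend on the sign of $v$, so $I(u)=\tfrac1{\omega_n}\int_{\Sn}v^{s+2i}g_s(u\cdot v)\,\cH^{n-1}(\intd v)$, which by rotation equivariance has the form $\sum_zc_z\,Q^zu^{s+2i-2z}$; substituting this back turns $u_F^{s+2i-2z}$ into $\TenCM{n-1}{r}{s+2i-2z}{0}(P,\beta)$, and weak continuity then yields the asserted identity for all $K\in\cK^n$ once the coefficients $c_z$ are identified.

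It remains to compute $I(u)$. For odd $s$ this is immediate: $v^{s+2i}g_s(u\cdot v)=v^{s+2i}(u\cdot v)$ equals $v^{s+2i+1}$ with one index contracted against $u$, and by the standard even-moment formula on $\Sn$,
\[
  \int_{\Sn}v^{s+2i+1}\,\cH^{n-1}(\intd v)=\omega_n\,\frac{\Gamma(\tfrac{n}{2})\,\Gamma(\tfrac{s}{2}+i+1)}{\sqrt\pi\,\Gamma(\tfrac{n+s+1}{2}+i)}\,Q^{\frac{s+1}{2}+i};
\]
contracting one index of $Q^{p}$ with $u$ gives $Q^{p-1}u$, so $I(u)$ equals the above constant times $Q^{\frac{s-1}{2}+i}u$, which upon summation over the facets produces exactly the constant and the single tensor $Q^{\frac{s-1}{2}+i}\TenCM{n-1}{r}{1}{0}$ claimed for odd $s$. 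For even $s$ there is no such shortcut: I would write $v=(u\cdot v)u+\sqrt{1-(u\cdot v)^2}\,w$ with $w\in u^\perp\cap\Sn$, expand $v^{s+2i}$ by the binomial theorem, integrate the part on $u^\perp\cap\Sn$ by the standard moment formula (the metric tensor of $u^\perp$ being $Q-u^2$) and the power of $u\cdot v$ by a Beta integral, and then expand the powers of $Q-u^2$ and collect powers of $Q$ and $u^2$. These steps are routine; the real obstacle is the arithmetic of the resulting sum of Gamma functions. Concretely, one must show that the coefficient of $Q^{\frac{s}{2}+i-z}u^{2z}$ reduces to $\tfrac{\Gamma(\frac{n}{2})\Gamma(\frac{s+1}{2}+i)}{\pi\,\Gamma(\frac{n+s+1}{2}+i)}(-1)^z\binom{\frac{s}{2}+i}{z}\tfrac{1}{1-2z}$, i.e.\ that (with $M=\tfrac{s}{2}+i$)
\[
  \sum_{a=0}^{z}(-1)^{a}\binom{2M}{2a}\binom{M-a}{z-a}\,a!\,\Gamma\!\left(M-a+\tfrac12\right)=\Gamma\!\left(M+\tfrac12\right)\binom{M}{z}\,\frac{1}{1-2z}.
\]
This is precisely the type of identity established in the final section of the paper, and it is where the substance of the proof lies. (One could instead hope to deduce the theorem from Theorem~\ref{14-Thm_Main_Gen_Loc} by a limiting argument, but the factor $\Gamma(0)$ occurring there for $k=1$, $j=0$ makes the direct route both cleaner and the source of the simplified constants.)
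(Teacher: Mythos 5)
Your proposal is correct and, at its core, follows the same route as the paper: reduce to a polytope $P$, express the line sections facet by facet, evaluate a spherical moment integral, and identify the coefficients through a Gamma-function identity. The two genuine variations are worth noting. First, where the paper invokes Rataj's sectional formula (Lemma~\ref{14-Croft_Form_Rataj}) to pass from $\int_{L^\perp}\IntTenCM{0}{r}{s}{0}{L_t}(P\cap L_t,\cdot)\,\cH^{n-1}(\intd t)$ to a sum over facets weighted by $[F,L]^2\norm{p_L(u_F)}^{-1}=\abs{u_F\cdot v}$, you rederive exactly this for $k=1$, $j=0$ by the elementary endpoint/Jacobian argument; your $g_s(u_F\cdot v)$ is precisely $\pi_L(u_F)^s\norm{p_L(u_F)}^{-1}[F,L]^2$ stripped of the $v^s$ factor, so this step is sound (it tacitly assumes $\dim P=n$, but lower-dimensional bodies are handled by approximation anyway). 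Second, your treatment of odd $s$ by contracting one slot of $\int_{\Sn}v^{s+2i+1}\,\cH^{n-1}(\intd v)=c\,Q^{\frac{s+1}2+i}$ against $u$ is cleaner than the paper's Proposition~\ref{14-Prop_Int_Form_k=1}, which reaches the same point by letting an alternating sum collapse via $(1-1)^{\lfloor s/2\rfloor+i-z}=0$. The one incomplete step is the even case: you state the required identity but do not prove it. It is correct — after substituting $a\mapsto M-a$ and applying Legendre's duplication formula it becomes
\begin{equation*}
  \sum_{q=0}^{M-z}\frac{(-1)^{q}}{\Gamma(M-z-q+\tfrac12)\,q!}=\frac{(-1)^{M-z}}{\sqrt{\pi}\,(1-2(M-z))\,(M-z)!},
\end{equation*}
which is exactly Lemma~\ref{14-Lem_Zeil_2} of the paper and is proved there by a one-line telescoping sum — but as written your argument for even $s$ rests on an unverified combinatorial claim, so you would need to supply that telescoping (or Zeilberger) computation to close the proof.
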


  Note that in Theorem~\ref{14-Thm_Main_k=1} the Crofton integral is
  expressed only by tensorial curvature measures $\TenCM{n -
    1}{r}{z}{0}$ (multiplied with suitable powers of the metric
  tensor), whereas generalized tensorial curvature measures are not
  needed. A global version of Theorem~\ref{14-Thm_Main_k=1} is
  obtained by simply setting $\beta = \R^n$.
			
  \medskip

  A translation invariant, global version of
  Theorem~\ref{14-Thm_Main_Gen_Loc} allows us to combine several of
  the summands on the right-hand side of the formula.

  \begin{theorem} \label{14-Thm_Main_Gen} Let $K \in \cK^n$ and $i, j,
    k, s \in \N_0$ with $j < k < n$ and $k > 1$. Then
    \begin{equation*}
      \int_{A(n, k)} Q(E)^i \, \IntMinTen{j}{0}{s}{E} (K \cap E) \,
      \mu_k(\intd E)  = \gamma_{n, k, j} \sum_{z = 0}^{\lfloor \frac
        {s} {2} \rfloor + i} \lambda_{n, k, j, s, i, z}^{(0)} \, Q^z
      \MinTen{n - k + j}{0}{s + 2i - 2z} (K),
    \end{equation*}
    where $\gamma_{n, k, j}$ and $\lambda_{n, k, j, s, i, z}^{(0)}$
    are defined as in Theorem~\ref{14-Thm_Main_Gen_Loc}, but
    \begin{align*}
      \MoveEqLeft[1] \vartheta_{n, k, j, s, i ,z , p, q}^{(0)} := (n - k + j)\bigl(\tfrac {k - 1} 2 + p\bigr) - \bigl( p(n -
      k) - q(k - 1) \bigr) \bigl( 1 + \tfrac {k - j - 1} {s + 2i - 2z
        - 1} (1 - \tfrac {z} {p + q})\bigr)
    \end{align*}
    replaces $\vartheta_{n, k, j, p, q}^{(0)}$, except if $s$ is odd
    and $z=\lfloor\frac {s} {2}\rfloor + i$, where $\lambda_{n, k, j,
      s, i, \lfloor\frac {s} {2}\rfloor + i}^{(0)}:=0$.
  \end{theorem}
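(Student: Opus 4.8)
The plan is to derive the translation invariant global formula as a consequence of the local result Theorem~\ref{14-Thm_Main_Gen_Loc} by setting $\beta = \R^n$ and $r = 0$. After this substitution the left-hand side of Theorem~\ref{14-Thm_Main_Gen_Loc} becomes exactly the Crofton integral of $\IntMinTen{j}{0}{s}{E}(K\cap E)$ appearing in the claim (using $\IntMinTen{j}{0}{s}{E}(K\cap E)=\IntTenCM{j}{0}{s}{0}{E}(K\cap E,\R^n)$ and the convention omitting the first superscript), while the right-hand side becomes
\begin{equation*}
\gamma_{n,k,j}\sum_{z=0}^{\lfloor s/2\rfloor+i} Q^z\bigl(\lambda^{(0)}_{n,k,j,s,i,z}\,\MinTen{n-k+j}{0}{s+2i-2z}(K)+\lambda^{(1)}_{n,k,j,s,i,z}\,\MinTen{n-k+j}{0}{s+2i-2z-2}{1}(K)\bigr),
\end{equation*}
where $\MinTen{n-k+j}{0}{\cdot}{1}$ denotes the globalized generalized tensorial curvature measure. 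So the whole content of the theorem is to eliminate the $\MinTen{n-k+j}{0}{\cdot}{1}$-terms using McMullen's relation, and to bookkeep how this modifies the coefficient $\vartheta^{(0)}$.

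First I would invoke Lemma~\ref{14-Lem_McMullen} (McMullen's relation) to rewrite the globalized generalized tensor $\MinTen{n-k+j}{0}{t}{1}(K)$, together with the identity $Q=Q(F)+Q(F^\perp)$ applied face-by-face in the polytopal formula, to express $\MinTen{m}{0}{t}{1}(P)$ (with $m=n-k+j$) as a linear combination of $Q\,\MinTen{m}{0}{t}(P)$ and $\MinTen{m}{0}{t+2}(P)$; explicitly, from $Q(F)=Q-Q(F^\perp)$ and Lemma~\ref{14-Lem_McMullen} one gets a relation of the shape
\begin{equation*}
\MinTen{m}{0}{t}{1}(P)=Q\,\MinTen{m}{0}{t}(P)-\tfrac{n-m+t}{t+1}\,\MinTen{m}{0}{t+2}(P).
\end{equation*}
This is the globalized form of \eqref{14-Form_Rel_tenCM} extended via McMullen's identities mentioned in the text. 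Since all functionals involved are weakly continuous, the relation extends from $\cP^n$ to $\cK^n$. Substituting this for each $z$ into the $\lambda^{(1)}$-term (with $t=s+2i-2z-2$, so $t+2=s+2i-2z$ and $n-m+t = k-j+s+2i-2z-2$), the $Q\,\MinTen{m}{0}{t}$ piece contributes to the coefficient of $Q^{z+1}\MinTen{m}{0}{s+2i-2z-2}=Q^{z'}\MinTen{m}{0}{s+2i-2z'}$ with $z'=z+1$, and the $\MinTen{m}{0}{t+2}$ piece contributes to the coefficient of $Q^z\MinTen{m}{0}{s+2i-2z}$. Collecting the coefficient of $Q^z\MinTen{m}{0}{s+2i-2z}(K)$ therefore gives
\begin{equation*}
\lambda^{(0)}_{n,k,j,s,i,z}\;-\;\tfrac{k-j+s+2i-2z-2}{s+2i-2z-1}\,\lambda^{(1)}_{n,k,j,s,i,z}\;+\;\lambda^{(1)}_{n,k,j,s,i,z-1},
\end{equation*}
which has to be identified with $\gamma_{n,k,j}^{-1}$ times the new coefficient, i.e.\ with the expression $\lambda^{(0)}_{n,k,j,s,i,z}$ built from the \emph{modified} $\vartheta^{(0)}_{n,k,j,s,i,z,p,q}$ stated in the theorem.

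The heart of the proof — and the step I expect to be the main obstacle — is the purely computational verification that this combination of the three sums $\lambda^{(0)}_{\cdots,z}$, $\lambda^{(1)}_{\cdots,z}$, $\lambda^{(1)}_{\cdots,z-1}$ collapses into a single double sum over $p,q$ with summand differing from the original only by the replacement $\vartheta^{(0)}_{n,k,j,p,q}\mapsto\vartheta^{(0)}_{n,k,j,s,i,z,p,q}$. To do this I would work at the level of the inner $(p,q)$-summands: in $\lambda^{(1)}_{n,k,j,s,i,z}$ the lower summation bound on $q$ is $(z-p+1)^+$ versus $(z-p)^+$ in $\lambda^{(0)}$, and the binomial $\binom{p+q-\varepsilon}{z}$ carries $\varepsilon=1$ rather than $\varepsilon=0$; shifting the index $z\mapsto z-1$ in the $\lambda^{(1)}_{\cdots,z-1}$ term realigns all three sums to the same range and the same binomial $\binom{p+q}{z}$, after which the Gamma-factor prefactors (which are identical across the three since they depend on $p,q$ but not on the $\varepsilon$-superscript beyond $\vartheta$) can be pulled out. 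What remains inside is an identity among the three $\vartheta$-values:
\begin{equation*}
\vartheta^{(0)}_{n,k,j,p,q}\;-\;\tfrac{k-j+s+2i-2z-2}{s+2i-2z-1}\,\vartheta^{(1)}_{n,k,j,p,q}\cdot(\text{shift factor})\;+\;\vartheta^{(1)}_{n,k,j,p,q}\cdot(\text{shift factor}')\;=\;\vartheta^{(0)}_{n,k,j,s,i,z,p,q},
\end{equation*}
where the shift factors come from the ratio of binomial coefficients $\binom{s+2i-2p}{2q}$ and $\binom{p+q}{z}$ between the $z$ and $z-1$ versions (e.g.\ $\binom{p+q}{z}/\binom{p+q}{z-1}=\tfrac{p+q-z+1}{z}$ and similar), and one uses $\vartheta^{(1)}_{n,k,j,p,q}=p(n-k)-q(k-1)$. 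Carrying out this elementary but delicate algebra — carefully tracking the $1+\tfrac{k-j-1}{s+2i-2z-1}(1-\tfrac{z}{p+q})$ factor and checking the boundary terms $z=0$ and $z=\lfloor s/2\rfloor+i$ (the latter explaining the stated exception $\lambda^{(0)}_{\cdots,\lfloor s/2\rfloor+i}:=0$ when $s$ is odd, since then $s+2i-2z=1$ and the would-be tensor $\MinTen{m}{0}{1}{1}$ produces via McMullen an $\MinTen{m}{0}{3}$ that is reabsorbed, while $s+2i-2z-1=0$ makes the literal formula singular) — is the technical crux. Everything else is assembly.
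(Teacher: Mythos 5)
Your plan is correct and follows essentially the same route as the paper: globalize Theorem~\ref{14-Thm_Main_Gen_Loc} for polytopes, use $Q(F)=Q-Q(F^{\perp})$ together with Lemma~\ref{14-Lem_McMullen} to replace the generalized tensors by $Q\,\MinTen{m}{0}{t}(P)-\tfrac{k-j+t}{t+1}\MinTen{m}{0}{t+2}(P)$, collect the coefficient $\lambda^{(0)}_{z}+\lambda^{(1)}_{z-1}-\tfrac{k-j+s+2i-2z-2}{s+2i-2z-1}\lambda^{(1)}_{z}$ of $Q^{z}\MinTen{n-k+j}{0}{s+2i-2z}$, and verify the resulting $\vartheta$-identity termwise (the relevant ratios are $\binom{p+q-1}{z-1}/\binom{p+q}{z}=z/(p+q)$ and $\binom{p+q-1}{z}/\binom{p+q}{z}=(p+q-z)/(p+q)$, after which the algebra closes exactly as in the paper; the odd-$s$ exception is handled because $\MinTen{n-k+j}{0}{1}\equiv 0$). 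The deferred computation is precisely the one the paper carries out, so nothing essential is missing.
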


  In Theorem~\ref{14-Thm_Main_Gen}, if $p = q = 0$, then the
  definition of $\lambda_{n, k, j, s, i, z}^{\smash{(0)}}$ implies
  that also $z=0$ and thus, $\vartheta_{n, k, j, s, i ,0 , 0,
    0}^{\smash{(0)}}$ is well-defined with $\tfrac {z} {p + q} = 1$.

  \subsection{Some Special Cases}\label{14-sec3.2}

  In the following, we restrict to the case $i=0$ of Crofton formulae
  for unweighted intrinsic Minkowski tensors or tensorial curvature
  measures.

  \begin{corollary} \label{14-Cor_Main_i=0} Let $K \in \cK^n$ and $k,
    j, s \in \N_0$ with $0 \leq j < k < n$. Then
    \begin{equation*}
      \int_{A(n, k)} \IntMinTen{j}{0}{s}{E} (K \cap E) \, \mu_k(\intd
      E) = \delta_{n, k, j, s} \sum_{z=0}^{\lfloor \frac s 2 \rfloor}
      \eta_{n, k, j, s, z} \, Q^z \MinTen{n - k + j}{0}{s - 2z} (K),
    \end{equation*}
    where
    \begin{align*}
      \delta_{n, k, j, s} : ={} & \binom{n - k + j - 1}{j}
      \frac{\Gamma(\frac {n - k + 1} 2) \Gamma(\frac {k + 1} 2)}{\pi
        \Gamma(\frac{n - k + j + s} 2 + 1)},
      \\
      \eta_{n, k, j, s, z} := {} & \sum_{q = z}^{\lfloor \frac s 2
        \rfloor} (-1)^{q - z} \binom{s}{2q} \binom{q}{z} \Gamma(q +
      \tfrac 1 2) \frac {\Gamma(\frac {j + s} 2 - q + 1) \Gamma(\frac
        {n - k} 2 + q)} {\Gamma(\frac {n + 1} 2 + q)}
      \\
      & \times \left( \tfrac {n - k + j} 2 + q + \tfrac {(k - j - 1)
          (q - z)} {s - 2z - 1} \right),
    \end{align*}
    but $\eta_{n, k, j, s, \lfloor \frac s 2 \rfloor} := 0$ if $s$ is
    odd.
  \end{corollary}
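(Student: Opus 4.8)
The plan is to obtain Corollary~\ref{14-Cor_Main_i=0} simply as the case $i=0$ of the Crofton formulae already established in Theorems~\ref{14-Thm_Main_Gen} and~\ref{14-Thm_Main_k=1}; the only genuine work is the simplification of the constants, which is what produces the compact form stated.

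For $k\ge 2$ I would put $i=0$ in Theorem~\ref{14-Thm_Main_Gen}. The binomial $\binom ip$ then forces $p=0$, so the double sum defining $\lambda_{n,k,j,s,0,z}^{(0)}$ collapses to a single sum over $q\in\{z,\dots,\lfloor s/2\rfloor\}$, and inserting $p=0$ into the modified quantity $\vartheta^{(0)}$ of Theorem~\ref{14-Thm_Main_Gen} yields
\[
\vartheta_{n,k,j,s,0,z,0,q}^{(0)}=(k-1)\Bigl(\tfrac{n-k+j}2+q+\tfrac{(k-j-1)(q-z)}{s-2z-1}\Bigr).
\]
Next I would factor out $k-1$, combine it with the $\Gamma(\tfrac{k-1}2)$ present in $\lambda^{(0)}$ via $(k-1)\Gamma(\tfrac{k-1}2)=2\Gamma(\tfrac{k+1}2)$, cancel the resulting $2$ against the $\tfrac1{2\pi}$ in $\gamma_{n,k,j}$, and move the factor $\Gamma(\tfrac{n-k+j+s}2+1)^{-1}$ out of $\lambda^{(0)}$ into the prefactor. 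After these purely algebraic rearrangements $\gamma_{n,k,j}\,\lambda_{n,k,j,s,0,z}^{(0)}$ becomes exactly $\delta_{n,k,j,s}\,\eta_{n,k,j,s,z}$, and the exclusion $\eta_{n,k,j,s,\lfloor s/2\rfloor}:=0$ for odd $s$ is inherited verbatim from the corresponding clause of Theorem~\ref{14-Thm_Main_Gen}. This disposes of the case $k\ge 2$.

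For $k=1$ (so $j=0$) Theorem~\ref{14-Thm_Main_Gen} is unavailable because its constants degenerate ($\Gamma(0)$ appears), so I would start instead from Theorem~\ref{14-Thm_Main_k=1} with $i=0$ and $\beta=\R^n$. For odd $s$ both sides of the asserted identity vanish: the integrand $\IntMinTen{0}{0}{s}{E}(K\cap E)$ is zero since in a one-dimensional flat the integral of $u^s$ over the two unit vectors of $L(E)$ vanishes, while $\eta_{n,1,0,s,z}=0$ for every $z$ is a special case of one of the summation identities for Gamma functions collected in the final section. For even $s$ I would reindex the sum in Theorem~\ref{14-Thm_Main_k=1} by $z\mapsto s/2-z$ to bring it into the shape $\sum_z(\text{const})\,Q^z\MinTen{n-1}{0}{s-2z}(K)$; since $k-j-1=0$ the fractional term in $\eta_{n,1,0,s,z}$ drops out and the telescoping $\tfrac{\Gamma((n-1)/2+q)\,(\tfrac{n-1}2+q)}{\Gamma((n+1)/2+q)}=1$ reduces $\eta_{n,1,0,s,z}$ to $\sum_{q=z}^{s/2}(-1)^{q-z}\binom s{2q}\binom qz\Gamma(q+\tfrac12)\Gamma(\tfrac s2-q+1)$, so that comparison with the explicit constant of Theorem~\ref{14-Thm_Main_k=1} leaves only the identity
\[
\sum_{q=z}^{s/2}(-1)^{q-z}\binom s{2q}\binom qz\Gamma\bigl(q+\tfrac12\bigr)\Gamma\bigl(\tfrac s2-q+1\bigr)=\Gamma\bigl(\tfrac{s+1}2\bigr)(-1)^{s/2-z}\binom{s/2}z\frac1{1-s+2z},
\]
again one of the Gamma-function identities from the final section, to be checked.

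The step I expect to be the main obstacle is precisely these Gamma-function summation identities needed when $k=1$ — above all the even-$s$ one — which is exactly why such formulae are isolated in the final section; the remainder is bookkeeping. A secondary point is that $k=1$ must be separated from the start, because $\vartheta^{(0)}$, $\gamma_{n,k,j}$ and $\lambda_{n,k,j,s,i,z}^{(0)}$ in Theorem~\ref{14-Thm_Main_Gen} are only meaningful for $k\ge 2$, whereas $\delta_{n,k,j,s}$ and $\eta_{n,k,j,s,z}$ retain their sense at $k=1$.
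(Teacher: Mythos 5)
Your proposal is correct and follows the paper's (implicit) route: the corollary is obtained exactly as you say, by setting $i=0$ in Theorem~\ref{14-Thm_Main_Gen}, whereupon $p=0$ is forced, $\vartheta^{(0)}_{n,k,j,s,0,z,0,q}$ factors as $(k-1)\bigl(\tfrac{n-k+j}{2}+q+\tfrac{(k-j-1)(q-z)}{s-2z-1}\bigr)$, and the identity $(k-1)\Gamma(\tfrac{k-1}{2})=2\Gamma(\tfrac{k+1}{2})$ turns $\gamma_{n,k,j}\lambda^{(0)}_{n,k,j,s,0,z}$ into $\delta_{n,k,j,s}\eta_{n,k,j,s,z}$. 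Your separate treatment of $k=1$ via Theorem~\ref{14-Thm_Main_k=1} is also correct (the even-$s$ Gamma identity you leave to check does follow from Lemma~\ref{14-Lem_Zeil_2} together with Legendre's duplication formula, and the odd-$s$ vanishing of $\eta_{n,1,0,s,z}$ is the binomial-theorem computation \eqref{14-Form_Prop_Int_Form_k=1}); this case is needed since the hypothesis $0\le j<k<n$ admits $k=1$ while Theorem~\ref{14-Thm_Main_Gen} requires $k>1$, a point the paper leaves implicit.
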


  \noindent
  \textbf{Specific choices of $s$}
  \\

  \noindent
  Next we collect some special cases of
  Corollary~\ref{14-Cor_Main_i=0}, which are obtained for specific
  choices of $s \in \N_{0}$ by applications of Legendre's duplication
  formula and elementary calculations.

        \begin{corollary}
          Let $K \in \cK^n$ and $k, j \in \N_0$ with $0 \leq j < k <
          n$. Then
          \begin{align*}
            \MoveEqLeft  \int_{A(n, k)} \IntMinTen{j}{0}{2}{E} (K \cap E) \, \mu_k(\intd E) \\
            & = \frac {\Gamma(\frac{k + 1} 2)\Gamma(\frac {n - k + j +
                1} 2)}{\Gamma(\frac {n + 3} 2)\Gamma(\frac {j + 1} 2)}
            \bigl( \tfrac {n - k} {4 (n - k + j)} \, Q \MinTen{n - k +
              j}{0}{0} (K) + \tfrac {n - k + nj + j} {2(n - k + j)} \,
            \MinTen{n - k + j}{0}{2} (K) \bigr).
          \end{align*}
        \end{corollary}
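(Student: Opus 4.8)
The statement in question is the special case $s=2$ of Corollary~\ref{14-Cor_Main_i=0}, so the plan is simply to specialize and simplify. First I would write down $\delta_{n,k,j,2}$ and $\eta_{n,k,j,2,z}$ for $z\in\{0,1\}$ directly from the general formulae, using $\lfloor s/2\rfloor=1$. The prefactor becomes
\[
\delta_{n,k,j,2}=\binom{n-k+j-1}{j}\frac{\Gamma(\tfrac{n-k+1}{2})\Gamma(\tfrac{k+1}{2})}{\pi\,\Gamma(\tfrac{n-k+j}{2}+2)}.
\]
For the inner coefficients, the sum over $q$ runs from $z$ to $1$. For $z=1$ only $q=1$ contributes, giving $\eta_{n,k,j,2,1}=\binom{2}{2}\binom{1}{1}\Gamma(\tfrac32)\,\frac{\Gamma(\tfrac{j}{2}+1)\Gamma(\tfrac{n-k}{2}+1)}{\Gamma(\tfrac{n+1}{2}+1)}\cdot\tfrac{n-k+j+2}{2}$ (the correction term with denominator $s-2z-1=-1$ multiplied by $q-z=0$ vanishes). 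For $z=0$ both $q=0$ and $q=1$ contribute: the $q=0$ term is $\Gamma(\tfrac12)\,\frac{\Gamma(\tfrac{j}{2}+1)\Gamma(\tfrac{n-k}{2})}{\Gamma(\tfrac{n+1}{2})}\cdot\tfrac{n-k+j}{2}$, and the $q=1$ term carries an extra sign $(-1)^{1}$, the binomials $\binom{2}{2}\binom{1}{0}=1$, and the correction term $\tfrac{(k-j-1)(q-z)}{s-2z-1}=k-j-1$ so the bracket is $\tfrac{n-k+j}{2}+1+(k-j-1)=\tfrac{n+k-j}{2}$.

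The heart of the computation is then to combine these Gamma-function expressions into the compact form stated. I would pull out the common factor $\Gamma(\tfrac{n-k}{2})\,\Gamma(\tfrac{j}{2}+1)/\Gamma(\tfrac{n+1}{2})$ from $\eta_{n,k,j,2,0}$ and $\eta_{n,k,j,2,1}$, using $\Gamma(\tfrac{n-k}{2}+1)=\tfrac{n-k}{2}\Gamma(\tfrac{n-k}{2})$, $\Gamma(\tfrac{n+1}{2}+1)=\tfrac{n+1}{2}\Gamma(\tfrac{n+1}{2})$, $\Gamma(\tfrac32)=\tfrac{\sqrt\pi}{2}$, $\Gamma(\tfrac12)=\sqrt\pi$, and multiply through by $\delta_{n,k,j,2}$. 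The binomial prefactor $\binom{n-k+j-1}{j}=\frac{\Gamma(n-k+j)}{\Gamma(j+1)\Gamma(n-k)}$ together with the surviving $\Gamma$-factors should, after one application of Legendre's duplication formula $\Gamma(z)\Gamma(z+\tfrac12)=2^{1-2z}\sqrt\pi\,\Gamma(2z)$ (applied to collapse $\Gamma(\tfrac{n-k}{2})$, $\Gamma(\tfrac{j}{2}+1)$ etc. against half-integer partners, or more simply applied in reverse to $\Gamma(n-k+j)$ and $\Gamma(n-k)$), telescope to the clean ratio $\Gamma(\tfrac{k+1}{2})\Gamma(\tfrac{n-k+j+1}{2})/\bigl(\Gamma(\tfrac{n+3}{2})\Gamma(\tfrac{j+1}{2})\bigr)$ asserted in the corollary. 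The residual rational functions of $n,k,j$ must match $\tfrac{n-k}{4(n-k+j)}$ for the $Q$-term and $\tfrac{n-k+nj+j}{2(n-k+j)}$ for the untwisted term; this is a finite algebraic identity that I would verify by clearing denominators.

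The main obstacle is purely bookkeeping: tracking the half-integer Gamma arguments and ensuring the duplication formula is applied with the correct arguments so that all square roots of $\pi$ and powers of $2$ cancel, and that the $\Gamma(\tfrac{n-k+j}{2}+2)$ in the denominator of $\delta$ correctly combines with $\Gamma(\tfrac{n-k+j}{2}+1)$-type factors coming from $\Gamma(\tfrac{j+s}{2}-q+1)$ with $s=2$. There is no conceptual difficulty — everything follows from Corollary~\ref{14-Cor_Main_i=0} by substitution — but the final rational-function identity $\tfrac{n-k+j}{2}\cdot(\text{coefficient of }Q)$ versus the stated $\tfrac{n-k}{4(n-k+j)}$ requires care because the factor $n-k+j$ appears both in the bracket terms and must reappear in the denominator after simplification; I expect this to come out once the $q=0$ and $q=1$ contributions to $\eta_{n,k,j,2,0}$ are put over the common denominator $2(n-k+j)$ (noting $\tfrac{n-k+j}{2}-\tfrac{n+k-j}{2}\cdot\tfrac{n-k}{n+1}$ must be rewritten using $n+1$ in the denominator, which then cancels against $\Gamma(\tfrac{n+3}{2})=\tfrac{n+1}{2}\Gamma(\tfrac{n+1}{2})$).
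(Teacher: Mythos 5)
Your overall strategy is exactly the one the paper intends: this corollary is meant to be the case $s=2$ of Corollary~\ref{14-Cor_Main_i=0}, obtained by writing out $\delta_{n,k,j,2}$ and $\eta_{n,k,j,2,z}$ for $z\in\{0,1\}$ and collapsing the resulting Gamma quotients with Legendre's duplication formula applied to $\binom{n-k+j-1}{j}=\Gamma(n-k+j)/(\Gamma(j+1)\Gamma(n-k))$. However, two concrete points in your execution need repair. First, in the $q=0$ term of $\eta_{n,k,j,2,0}$ the factor $\Gamma(\frac{j+s}{2}-q+1)$ equals $\Gamma(\frac j2+2)$, not $\Gamma(\frac j2+1)$ as you wrote (you dropped the ``$+1$''). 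This matters: with the correct $\Gamma(\frac j2+2)=\frac{j+2}{2}\Gamma(\frac j2+1)$ the $z=0$ coefficient reduces, after pulling out $\Gamma(\frac j2+1)\Gamma(\frac{n-k}2)/\Gamma(\frac{n+1}2)$, to the bracket $\frac14\bigl[(j+2)(n-k+j)-\frac{(n-k)(n+k-j)}{n+1}\bigr]$, and the identity $(n+1)(j+2)(n-k+j)-(n-k)(n+k-j)=(n-k+j+nj)(n-k+j+2)$ is what produces the stated factor $\frac{n-k+nj+j}{2(n-k+j)}$ after the $\Gamma(\frac{n-k+j}{2}+2)$ in $\delta_{n,k,j,2}$ is cancelled. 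With your $\Gamma(\frac j2+1)$ the identity does not close.

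Second, and more importantly: your $z=1$ computation is correct as far as it goes, but if you carry it to the end you will find
\begin{equation*}
\delta_{n,k,j,2}\,\eta_{n,k,j,2,1}=\frac{\Gamma(\frac{k+1}2)\Gamma(\frac{n-k+j+1}2)}{\Gamma(\frac{n+3}2)\Gamma(\frac{j+1}2)}\cdot\frac{n-k}{2(n-k+j)},
\end{equation*}
with a $2$ rather than the printed $4$ in the denominator: the factor $\frac{n-k+j+2}{2}$ from the bracket cancels against $\Gamma(\frac{n-k+j}{2}+2)=\frac{n-k+j+2}{2}\Gamma(\frac{n-k+j}{2}+1)$, and then $\Gamma(\frac{n-k}2+1)/\Gamma(\frac{n-k}2)=\frac{n-k}{2}$ and $\Gamma(\frac{n-k+j}2)/\Gamma(\frac{n-k+j}2+1)=\frac{2}{n-k+j}$ leave only one factor $\frac12$, from $\Gamma(\frac32)=\frac{\sqrt\pi}{2}$. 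You can cross-check that $\frac{n-k}{2(n-k+j)}$ is the value consistent with the rest of the paper: for $n=2$, $k=1$, $j=0$, $s=2$ Corollary~\ref{14-Cor_k=1} gives the coefficient $\frac{2}{3\pi}$ for $Q\MinTen{1}{0}{0}$, whereas the displayed corollary gives $\frac{1}{3\pi}$; likewise the globalized Corollary~\ref{14-Cor_j=k-1} for $j=k-1$, and a direct computation of both sides for $K=B^3$, $n=3$, $k=2$, $j=1$, agree with $\frac{n-k}{2(n-k+j)}$. So do not force your ``finite algebraic identity'' to match the printed coefficient of $Q\MinTen{n-k+j}{0}{0}$; a correct execution of your plan will (and should) produce $\frac{n-k}{2(n-k+j)}$ there.
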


        \begin{corollary} \label{14-Cor_s=3} Let $K \in \cK^n$ and $k,
          j \in \N_0$ with $0 \leq j < k < n$. Then
          \begin{equation*}
            \int_{A(n, k)} \IntMinTen{j}{0}{3}{E} (K \cap E) \,
            \mu_k(\intd E)  = \frac {j + 1} {n - k + j + 1} \frac
            {\Gamma(\frac{k + 1} 2) \Gamma(\frac {n - k + j}
              2)}{\Gamma(\frac {n + 1} 2) \Gamma(\frac j 2)} \MinTen{n
              - k + j}{0}{3} (K).
          \end{equation*}
        \end{corollary}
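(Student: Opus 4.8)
The plan is to specialize Corollary~\ref{14-Cor_Main_i=0} to the case $s = 3$ and then collapse the resulting sum, which by the concluding remark of that corollary has only one surviving term. Since $s = 3$ is odd, we have $\lfloor s/2 \rfloor = 1$ and the convention $\eta_{n,k,j,3,1} := 0$ kills the $z = 1$ term; hence the outer sum over $z$ reduces to $z = 0$ alone, and we are left with
\begin{equation*}
  \int_{A(n, k)} \IntMinTen{j}{0}{3}{E} (K \cap E) \, \mu_k(\intd E) = \delta_{n, k, j, 3} \, \eta_{n, k, j, 3, 0} \, \MinTen{n - k + j}{0}{3} (K).
\end{equation*}
So the whole task is to evaluate the scalar $\delta_{n,k,j,3}\,\eta_{n,k,j,3,0}$ and check it equals the stated constant.

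First I would write out $\eta_{n,k,j,3,0}$ explicitly. With $s = 3$ and $z = 0$, the inner sum runs over $q$ from $0$ to $\lfloor 3/2\rfloor = 1$, so there are two terms, $q = 0$ and $q = 1$. For $q = 0$: $\binom{3}{0} = 1$, $\binom{0}{0} = 1$, $\Gamma(\tfrac12) = \sqrt\pi$, the Gamma ratio is $\Gamma(\tfrac{j+3}{2}+1)\Gamma(\tfrac{n-k}{2})/\Gamma(\tfrac{n+1}{2})$, and the last bracket becomes $\tfrac{n-k+j}{2} + 0 + \tfrac{(k-j-1)\cdot 0}{3-1} = \tfrac{n-k+j}{2}$. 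For $q = 1$: the sign is $(-1)^{1} = -1$, $\binom{3}{2} = 3$, $\binom{1}{0} = 1$, $\Gamma(\tfrac32) = \tfrac12\sqrt\pi$, the Gamma ratio is $\Gamma(\tfrac{j+3}{2})\Gamma(\tfrac{n-k}{2}+1)/\Gamma(\tfrac{n+3}{2})$, and the last bracket is $\tfrac{n-k+j}{2} + 1 + \tfrac{(k-j-1)}{2}$. Then I would pull out the common factor $\sqrt\pi\,\Gamma(\tfrac{j+3}{2})\Gamma(\tfrac{n-k}{2})/\Gamma(\tfrac{n+1}{2})$ from both terms (using $\Gamma(\tfrac{j+3}{2}+1) = \tfrac{j+3}{2}\Gamma(\tfrac{j+3}{2})$, $\Gamma(\tfrac{n-k}{2}+1) = \tfrac{n-k}{2}\Gamma(\tfrac{n-k}{2})$, and $\Gamma(\tfrac{n+3}{2}) = \tfrac{n+1}{2}\Gamma(\tfrac{n+1}{2})$), leaving a purely rational expression in $n, k, j$ inside the brackets, which I would combine over a common denominator and simplify. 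I expect the bracketed rational factor to collapse to something like a multiple of $\tfrac{j}{n-k+j+1}$ times an elementary factor; this algebraic simplification is the main computational step and the place where an arithmetic slip is most likely, so I would double-check it by plugging in small numerical values of $(n,k,j)$.

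Finally I would multiply by $\delta_{n,k,j,3} = \binom{n-k+j-1}{j}\frac{\Gamma(\tfrac{n-k+1}{2})\Gamma(\tfrac{k+1}{2})}{\pi\,\Gamma(\tfrac{n-k+j+3}{2}+1)}$ and simplify the combined constant against the claimed value $\tfrac{j+1}{n-k+j+1}\cdot\tfrac{\Gamma(\tfrac{k+1}{2})\Gamma(\tfrac{n-k+j}{2})}{\Gamma(\tfrac{n+1}{2})\Gamma(\tfrac{j}{2})}$. The factors $\Gamma(\tfrac{k+1}{2})$ cancel immediately. The binomial coefficient $\binom{n-k+j-1}{j} = \tfrac{\Gamma(n-k+j)}{\Gamma(j+1)\Gamma(n-k)}$ together with the remaining Gamma factors should reduce via Legendre's duplication formula $\Gamma(x)\Gamma(x+\tfrac12) = 2^{1-2x}\sqrt\pi\,\Gamma(2x)$ applied at suitable half-integer and integer arguments — the natural pairings being $(\tfrac{n-k+1}{2}, \tfrac{n-k}{2})$ to produce $\Gamma(n-k)$, and analogous pairings involving $\tfrac{j}{2}$ and $\tfrac{j+1}{2}$ — so that all the $2$-powers and $\sqrt\pi$'s cancel and the identity closes. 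The hard part is purely bookkeeping: keeping the many Gamma arguments straight through the duplication-formula substitutions; there is no conceptual obstacle, since everything follows from Corollary~\ref{14-Cor_Main_i=0} by a finite elementary calculation.
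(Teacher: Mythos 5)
Your proposal is correct and follows exactly the route the paper intends: the paper gives no separate proof of this corollary, stating only that it follows from Corollary~\ref{14-Cor_Main_i=0} ``by applications of Legendre's duplication formula and elementary calculations,'' which is precisely your plan. The computation does close as you expect: the two-term sum for $\eta_{n,k,j,3,0}$ collapses (the bracket for $q=1$ equals $\tfrac{n+1}{2}$ and the combined rational factor reduces to $j(n-k+j+3)$, whose factor $n-k+j+3$ cancels against $\Gamma(\tfrac{n-k+j+3}{2}+1)$), and three applications of Legendre's duplication formula then yield the stated constant.
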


        As $\Gamma(\frac j 2)^{-1} = 0$, for $j = 0$, the integral in
        Corollary~\ref{14-Cor_s=3} equals 0 in this case. However, as
        the integrand on the left-hand side is already 0, this is not
        surprising. The same is true for any odd number $s \in \N$ and
        $j = 0$.

        Corollary~\ref{14-Cor_s=3} immediately leads to a result which
        was obtained and applied by Bernig and Hug in
        \cite[Lemma~4.13]{14-BernHug15}.

        \begin{corollary}
          Let $K \in \cK^n$. Then
          \begin{equation*}
            \int_{A(n, 2)} \IntMinTen{1}{0}{3}{E} (K \cap E) \,
            \mu_k(\intd E) = \binom{n}{2}^{-1} \MinTen{n - 1}{0}{3}
            (K).
          \end{equation*}
        \end{corollary}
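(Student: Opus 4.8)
The plan is to obtain this corollary as the special case $k=2$, $j=1$ of Corollary~\ref{14-Cor_s=3}, after which only an elementary simplification of a ratio of Gamma functions remains. Note that for this choice the hypothesis $0\le j<k<n$ of Corollary~\ref{14-Cor_s=3} reads $1<2<n$, so one should assume $n\ge 3$; for $n\le 2$ there is nothing to prove, since then $A(n,2)$ consists (up to the excluded boundary case $k=n$) of no relevant flats.

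So the first step is to substitute $k=2$ and $j=1$ into the formula of Corollary~\ref{14-Cor_s=3}. Then $j+1=2$, $n-k+j+1=n$ and $n-k+j=n-1$, and using $\Gamma(\tfrac32)=\tfrac{\sqrt\pi}{2}$ together with $\Gamma(\tfrac12)=\sqrt\pi$, the constant becomes
\[
\frac{2}{n}\,\frac{\Gamma(\tfrac32)\,\Gamma(\tfrac{n-1}{2})}{\Gamma(\tfrac{n+1}{2})\,\Gamma(\tfrac12)}=\frac{1}{n}\,\frac{\Gamma(\tfrac{n-1}{2})}{\Gamma(\tfrac{n+1}{2})},
\]
the two factors of $\sqrt\pi$ cancelling. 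The second step is to apply the functional equation $\Gamma(x+1)=x\Gamma(x)$ with $x=\tfrac{n-1}{2}$, which gives $\Gamma(\tfrac{n+1}{2})=\tfrac{n-1}{2}\,\Gamma(\tfrac{n-1}{2})$, so the ratio of Gamma functions equals $\tfrac{2}{n-1}$ and the whole constant simplifies to $\tfrac{2}{n(n-1)}=\binom{n}{2}^{-1}$. On the right-hand side the Minkowski tensor $\MinTen{n-k+j}{0}{3}(K)$ specializes to $\MinTen{n-1}{0}{3}(K)$, which matches the asserted identity.

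Since this is merely a specialization of an already established identity followed by two routine Gamma-function manipulations, I do not anticipate any genuine difficulty; the only point worth flagging explicitly is the implicit restriction $n\ge 3$ needed for Corollary~\ref{14-Cor_s=3} to apply verbatim.
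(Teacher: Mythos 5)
Your proposal is correct and follows exactly the route the paper intends: the paper itself states that this corollary "immediately" follows from Corollary~\ref{14-Cor_s=3}, and your specialization $k=2$, $j=1$ together with the evaluations $\Gamma(\tfrac32)=\tfrac{\sqrt\pi}{2}$, $\Gamma(\tfrac12)=\sqrt\pi$ and $\Gamma(\tfrac{n+1}{2})=\tfrac{n-1}{2}\Gamma(\tfrac{n-1}{2})$ yields the constant $\tfrac{2}{n(n-1)}=\binom{n}{2}^{-1}$ as required. Your remark about the implicit assumption $n\ge 3$ is a reasonable point of care but does not change the argument.
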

        $ $

        \noindent
        \textbf{The choice $j = k - 1$}
        \\

        \noindent
        Furthermore, we obtain simple Crofton formulae for the
        specific choice $j = k - 1$ in the local and in the global
        case.

        \begin{corollary} \label{14-Cor_j=k-1} Let $K \in \cK^n$,
          $\beta \in \cB(\R^n)$ and $k, r, s \in \N_0$ with $1 < k <
          n$. Then
          \begin{equation*}
            \int_{A(n, k)} \IntTenCM{k-1}{r}{s}{0}{E} (K \cap E, \beta
            \cap E) \, \mu_k(\intd E) = \delta_{n, k, k - 1, s}
            \sum_{z = 0}^{\lfloor \frac s 2 \rfloor} \xi_{n, k, s, z}
            Q^z \TenCM{n - 1}{r}{s-2z}{0} (K, \beta),
          \end{equation*}
          where
          \begin{equation*}
            \xi_{n, k, s, z} := \sum_{q = z}^{\lfloor \frac s 2
              \rfloor} (-1)^{q - z} \binom{s}{2q} \binom{q}{z} \Gamma
            \left(q + \tfrac 1 2 \right) \frac {\Gamma(\frac {k + s +
                1} 2 - q) \Gamma(\frac {n - k} 2 + q)}{\Gamma(\frac {n
                - 1 } 2 + q)}.
          \end{equation*}
        \end{corollary}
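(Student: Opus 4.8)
The plan is to deduce Corollary~\ref{14-Cor_j=k-1} from Theorem~\ref{14-Thm_Main_Gen_Loc}, applied with $i=0$ and $j=k-1$, followed by an application of the relation \eqref{14-Form_Rel_tenCM} to eliminate the generalized tensorial curvature measures. Since $1<k<n$ and $j=k-1$ satisfies $0\le j<k<n$ with $k>1$, Theorem~\ref{14-Thm_Main_Gen_Loc} is available, and choosing $i=0$ forces $p=0$ in the double sums defining the coefficients $\lambda_{n,k,j,s,i,z}^{(\varepsilon)}$, so these collapse to single sums over $q$. Substituting $j=k-1$ (hence $n-k+j=n-1$) and simplifying the Gamma quotients, e.g.\ $\Gamma(\tfrac{j+s}{2}-q+1)=\Gamma(\tfrac{k+s+1}{2}-q)$, the right-hand side of Theorem~\ref{14-Thm_Main_Gen_Loc} becomes $\gamma_{n,k,k-1}\sum_{z=0}^{\lfloor s/2\rfloor} Q^z\bigl(\lambda_{n,k,k-1,s,0,z}^{(0)}\TenCM{n-1}{r}{s-2z}{0}(K,\beta)+\lambda_{n,k,k-1,s,0,z}^{(1)}\TenCM{n-1}{r}{s-2z-2}{1}(K,\beta)\bigr)$, with $\vartheta_{n,k,k-1,0,q}^{(0)}=(n-1)\tfrac{k-1}{2}$ and $\vartheta_{n,k,k-1,0,q}^{(1)}=-q(k-1)$.

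Next I would apply \eqref{14-Form_Rel_tenCM} in the form $\TenCM{n-1}{r}{s-2z-2}{1}=Q\,\TenCM{n-1}{r}{s-2z-2}{0}-\TenCM{n-1}{r}{s-2z}{0}$ (a genuine identity whenever the coefficient $\lambda^{(1)}_{n,k,k-1,s,0,z}$ is nonzero, i.e.\ when $z\le\lfloor s/2\rfloor-1$, since then $s-2z-2\ge 0$) and re-index, so that the coefficient of $Q^z\,\TenCM{n-1}{r}{s-2z}{0}(K,\beta)$ becomes $\lambda_{n,k,k-1,s,0,z}^{(0)}-\lambda_{n,k,k-1,s,0,z}^{(1)}+\lambda_{n,k,k-1,s,0,z-1}^{(1)}$; in particular only ordinary tensorial curvature measures survive, as predicted in the remark following Theorem~\ref{14-Thm_Main_Gen_Loc}. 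In the two $\lambda^{(1)}$-contributions the relevant binomial factors are $\binom{q-1}{z}$ and $\binom{q-1}{z-1}$; using $\binom{q-1}{z}=0$ at $q=z$ one may run both $q$-sums over $\{z,\dots,\lfloor s/2\rfloor\}$, and then Pascal's identity $\binom{q-1}{z}+\binom{q-1}{z-1}=\binom{q}{z}$ merges them into a single sum whose $q$-summand carries the factor $(n-1)\tfrac{k-1}{2}+(k-1)q=(k-1)\bigl(\tfrac{n-1}{2}+q\bigr)$.

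It then remains to match this single sum with the definition of $\xi_{n,k,s,z}$ and to reconcile the scalar prefactors. For the former I would use the elementary identities $(k-1)\Gamma(\tfrac{k-1}{2})=2\Gamma(\tfrac{k+1}{2})$ and $\tfrac{n-1}{2}+q=\Gamma(\tfrac{n+1}{2}+q)/\Gamma(\tfrac{n-1}{2}+q)$: the first absorbs the factor $\Gamma(\tfrac{k-1}{2})$ occurring in every $\lambda^{(\varepsilon)}_{n,k,k-1,s,0,z}$, and the second turns the denominator $\Gamma(\tfrac{n+1}{2}+q)$ into $\Gamma(\tfrac{n-1}{2}+q)$; what is left of the $q$-sum is then exactly $\xi_{n,k,s,z}$ multiplied by the scalar $2\Gamma(\tfrac{k+1}{2})/\Gamma(\tfrac{n+s-1}{2}+1)$. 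A short computation, using $\gamma_{n,k,k-1}=\binom{n-2}{k-1}\Gamma(\tfrac{n-k+1}{2})/(2\pi)$, shows that $\gamma_{n,k,k-1}\cdot 2\Gamma(\tfrac{k+1}{2})/\Gamma(\tfrac{n+s-1}{2}+1)=\delta_{n,k,k-1,s}$, which finishes the proof. I expect the only point requiring care to be the bookkeeping of the summation ranges under the re-indexing — in particular the boundary cases $q=z$, $z=0$ and $z=\lfloor s/2\rfloor$ — together with the observation that, unlike in the global Corollary~\ref{14-Cor_Main_i=0}, no additional vanishing of the top coefficient for odd $s$ has to be imposed here, because $\lambda_{n,k,k-1,s,0,\lfloor s/2\rfloor}^{(1)}=0$ automatically (its $q$-range being empty).
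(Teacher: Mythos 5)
Your proposal is correct and follows essentially the same route as the paper: specialize Theorem~\ref{14-Thm_Main_Gen_Loc} to $i=0$, $j=k-1$ (so $p=0$ collapses the double sums), eliminate the generalized measures via \eqref{14-Form_Rel_tenCM}, re-index so the coefficient of $Q^z\TenCM{n-1}{r}{s-2z}{0}$ is $\lambda^{(0)}_{z}+\lambda^{(1)}_{z-1}-\lambda^{(1)}_{z}$, merge the $q$-sums with Pascal's identity to produce the factor $(k-1)(\tfrac{n-1}{2}+q)$, and absorb it with $(k-1)\Gamma(\tfrac{k-1}{2})=2\Gamma(\tfrac{k+1}{2})$ and $(\tfrac{n-1}{2}+q)\Gamma(\tfrac{n-1}{2}+q)=\Gamma(\tfrac{n+1}{2}+q)$. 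Your bookkeeping of the boundary cases (including $\lambda^{(1)}_{n,k,k-1,s,0,\lfloor s/2\rfloor}=0$ because its $q$-range is empty) and the identification of the prefactor with $\delta_{n,k,k-1,s}$ match the paper's argument exactly.
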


        Corollary~\ref{14-Cor_j=k-1} will be derived from
        Theorem~\ref{14-Thm_Main_Gen_Loc} in the same way as
        Theorem~\ref{14-Thm_Main_Gen} is proved. More specifically,
        relation \eqref{14-Form_Rel_tenCM} is applied, which can be
        considered as a local version of Lemma~\ref{14-Lem_McMullen}
        in the particular case $j = n - 1$.  Although $k=1$ is
        excluded in Corollary~\ref{14-Cor_j=k-1}, the result is
        formally consistent with Theorem~\ref{14-Thm_Main_k=1} (for
        $i=0$), which can be checked by simplifying the coefficients
        $\xi_{n, 1, s, z}$ with the help of Zeilberger's algorithm.
				
        A global version of Corollary~\ref{14-Cor_j=k-1} is obtained
        by setting $\beta = \R^n$.

        Finally, Theorem~\ref{14-Thm_Main_k=1} can be globalized to
        give a result, which was obtained in \cite{14-KousKideHug15}
        by a completely different approach.

        \begin{corollary} \label{14-Cor_k=1} Let $K \in \cK^n$ and $s
          \in \N_0$. Then
          \begin{equation*}
            \int_{A(n, 1)} \IntMinTen{0}{0}{s}{E} (K \cap E) \,
            \mu_k(\intd E) = \frac {2 \omega_{n + s + 1}} {\pi
              \omega_{s + 1} \omega_n} \sum_{z = 0}^{\frac s 2 } \frac
            {\left( -1 \right)^{z}}{1 - 2z} \binom{\frac s 2}{z}
            Q^{\frac s 2 - z} \MinTen{n - 1}{0}{2z} (K)
          \end{equation*}
          for even $s$. For odd $s$ the integral on the left-hand side
          equals 0.
        \end{corollary}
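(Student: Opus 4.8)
The plan is to deduce Corollary~\ref{14-Cor_k=1} as the global, translation invariant specialization of Theorem~\ref{14-Thm_Main_k=1}. As noted there, setting $\beta=\R^n$ yields the global version; moreover $\beta\cap E=E$ and the intrinsic tensorial curvature measure of $K\cap E$ is concentrated on $E$, so with the conventions $\IntMinTen{0}{0}{s}{E}(K\cap E)=\IntTenCM{0}{0}{s}{0}{E}(K\cap E,\R^n)$ and $\TenCM{n-1}{0}{2z}{0}(K,\R^n)=\MinTen{n-1}{0}{2z}(K)$, the case of even $s$ in Theorem~\ref{14-Thm_Main_k=1} with $i=0$ and $r=0$ gives at once
\begin{equation*}
  \int_{A(n,1)}\IntMinTen{0}{0}{s}{E}(K\cap E)\,\mu_1(\intd E)=\frac{\Gamma(\frac n2)\Gamma(\frac{s+1}2)}{\pi\,\Gamma(\frac{n+s+1}2)}\sum_{z=0}^{s/2}\frac{(-1)^z}{1-2z}\binom{s/2}{z}\,Q^{s/2-z}\MinTen{n-1}{0}{2z}(K).
\end{equation*}

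So the only thing left for even $s$ is to rewrite the scalar prefactor. Using $\omega_m=2\pi^{m/2}/\Gamma(m/2)$ for $m\in\{n,\,s+1,\,n+s+1\}$ and $\tfrac{s+1}2+\tfrac n2=\tfrac{n+s+1}2$, one computes
\begin{equation*}
  \frac{2\omega_{n+s+1}}{\pi\,\omega_{s+1}\omega_n}=\frac{2}{\pi}\cdot\frac{2\pi^{(n+s+1)/2}}{\Gamma(\frac{n+s+1}2)}\cdot\frac{\Gamma(\frac{s+1}2)}{2\pi^{(s+1)/2}}\cdot\frac{\Gamma(\frac n2)}{2\pi^{n/2}}=\frac{\Gamma(\frac n2)\Gamma(\frac{s+1}2)}{\pi\,\Gamma(\frac{n+s+1}2)},
\end{equation*}
which turns the right-hand side into the asserted form.

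For odd $s$ the claim is immediate: here $j=0$, and as remarked after Corollary~\ref{14-Cor_s=3} the integrand $\IntMinTen{0}{0}{s}{E}(K\cap E)$ already vanishes for every line $E\in A(n,1)$ (the measure $\Lambda^{(E)}_0(K\cap E,\cdot)$ is invariant under $u\mapsto-u$, while $u^s$ is odd), so the Crofton integral is zero. This is also consistent with the odd-$s$ part of Theorem~\ref{14-Thm_Main_k=1}, which produces a multiple of $\MinTen{n-1}{0}{1}(K)=\omega_1\int_{\R^n\times\Sn}u\,\Lambda_{n-1}(K,\intd(x,u))$; the $\Sn$-marginal of $\Lambda_{n-1}(K,\cdot)$ is proportional to the surface area measure of $K$, whose vanishing first moment forces $\MinTen{n-1}{0}{1}(K)=0$. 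There is no real obstacle in any of this; the only genuine computation is the elementary re-expression of the Gamma-constant of Theorem~\ref{14-Thm_Main_k=1} in terms of surface areas of unit spheres.
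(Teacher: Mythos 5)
Your proof is correct and follows exactly the route the paper intends: globalize Theorem~\ref{14-Thm_Main_k=1} by setting $i=0$, $r=0$, $\beta=\R^n$, rewrite the Gamma-quotient as $2\omega_{n+s+1}/(\pi\omega_{s+1}\omega_n)$, and for odd $s$ use that the Crofton integrand (equivalently, the resulting multiple of $\MinTen{n-1}{0}{1}(K)$) vanishes, as the paper notes in the remark following the corollary. The constant conversion checks out, so nothing further is needed.
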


        Note that if $s \in \N$ is odd, then the Crofton integral in
        Theorem~\ref{14-Thm_Main_k=1} is a non-zero measure, as the
        tensorial curvature measures $\TenCM{n -
          1}{r}{1}{0}(K,\nularg)$ are non-zero (if the underlying set
        $K$ is at least $(n-1)$-dimensional), whereas $\MinTen{n -
          1}{0}{1} \equiv 0$ in the global case considered in
        Corollary~\ref{14-Cor_k=1}.

\subsection{Crofton Formulae for Extrinsic Tensorial Curvature
    Measures}\label{14-sec3.4}
		
		 In the following, we state  Crofton formulae for tensorial curvature measures for $j = k - 1$. The method
		also applies to the cases where $j\le k-2$, but it remains to be explored to which extent the constants can be simplified then.
		As for the intrinsic versions, we have to distinguish between the cases $k > 1$ and $k = 1$. We start with the former.

  \begin{theorem} \label{14-Thm_ExtrCF_j=k-1}
    Let $K \in \cK^{n}$, $\beta \in \cB(\R^{n})$ and $k, r, s \in \N_{0}$ with $1 < k < n$. Then
    \begin{align*}
      & \int_{A(n, k)} \TenCM{k - 1}{r}{s}{0} (K \cap E, \beta \cap E) \, \mu_k(\intd E) =
			\sum_{z = 0}^{\lfloor \frac s 2 \rfloor} \kappa_{n, k, s, z} \, Q^{z} \TenCM{n - 1}{r}{s - 2z}{0} (K, \beta),
    \end{align*}
    where
    $$
      \kappa_{n, k, s, z}: = \frac{k - 1} {n - 1} \frac {\pi^{\frac {n - k} 2} \Gamma(\frac {n} 2)} {\Gamma(\frac {k} 2) \Gamma(\frac {n - k} 2)} \frac{\Gamma(\frac {s + 1} 2) \Gamma(\frac s 2 + 1)}{\Gamma(\frac{n - k + s + 1}{2}) \Gamma(\frac {n + s - 1} 2)} \frac{\Gamma(\tfrac {n - k} 2 + z) \Gamma(\tfrac {k + s - 1} 2 - z)}{\Gamma(\frac s 2 - z + 1) z!}
    $$
    if $z \neq \frac {s - 1} 2$, and
    \begin{equation}\label{14-constkappa2}
      \kappa_{n, k, s, \frac {s - 1} 2} : = \pi^{\frac {n - k - 1} 2} \frac {2k (n + s - 2)} {(n - 1) (n - k + s - 1)} \frac {\Gamma(\frac {n} 2)}{\Gamma(\frac {n - k} 2)} \frac {\Gamma(\frac s 2 + 1)} { \Gamma(\frac {n + s + 1} 2) }.
    \end{equation}
  \end{theorem}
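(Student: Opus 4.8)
The plan is to reduce the extrinsic Crofton formula to the already established intrinsic one (Corollary~\ref{14-Cor_j=k-1} with its coefficients $\xi_{n,k,s,z}$) by relating the extrinsic tensorial curvature measure $\TenCM{k-1}{r}{s}{0}$ of $K\cap E$, computed in $\R^n$, to the intrinsic one $\IntTenCM{k-1}{r}{s}{0}{E}$, computed with $E$ as ambient space. For a polytope $P\subset E\in A(n,k)$ and a face $F\in\cF_{k-1}(P)$, the $\R^n$-normal cone $N(P,F)$ splits as an orthogonal sum $N_E(P,F)\oplus E^\perp$, where $N_E(P,F)=N(P,F)\cap L(E)$ is a halfline in $L(E)$ (since $\dim F = k-1$). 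Hence the spherical integral $\int_{N(P,F)\cap\Sn}u^s\,\cH^{n-k}(\intd u)$ can be computed by writing $u=\cos\varphi\, v + \sin\varphi\, w$ with $v$ the unit generator of $N_E(P,F)$, $w\in E^\perp\cap\Sn$, integrating $w$ over $E^\perp\cap\Sn$ and $\varphi$ over $[0,\pi/2)$ against $\sin^{n-k-1}\varphi$; expanding $u^s$ by the binomial theorem for symmetric tensor products produces, after integration, a linear combination of $v^{s-2z}Q(E^\perp)^z$ with explicit Gamma-function coefficients (this is exactly the type of spherical tensor integral recorded in Section~\ref{14-sce2}; cf.\ the computations underlying \cite{14-HugSchnSchu08}). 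Comparing with the definition of the intrinsic curvature measure, where the spherical integral is over the $0$-dimensional sphere $N_E(P,F)\cap\Sn=\{v\}$ and only produces $v^s$, this yields a pointwise identity on polytopes expressing $\TenCM{k-1}{r}{s}{0}(P,\nularg)$ in terms of $Q(E^\perp)^z\,\IntTenCM{k-1}{r}{s-2z}{0}{E}(P,\nularg)$.

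Next I would substitute $Q(E^\perp)=Q-Q(E)$ and expand $Q(E^\perp)^z=\sum_{t}(-1)^t\binom{z}{t}Q^{z-t}Q(E)^t$, so that the extrinsic integrand becomes a linear combination of terms $Q^a\,Q(E)^b\,\IntTenCM{k-1}{r}{c}{0}{E}(K\cap E,\beta\cap E)$. These are precisely the objects handled by the weighted intrinsic Crofton formula: integrating $\int_{A(n,k)}Q(E)^b\,\IntTenCM{k-1}{r}{c}{0}{E}(K\cap E,\beta\cap E)\,\mu_k(\intd E)$ is the $j=k-1$ case of Theorem~\ref{14-Thm_Main_Gen_Loc}, or more conveniently the globalized-in-the-sphere form, whose right-hand side is a combination of $Q^z\TenCM{n-1}{r}{s-2z}{0}(K,\beta)$ with coefficients built from $\delta_{n,k,k-1,c}$ and $\xi_{n,k,c,z}$ (here one can use the $i=b$ generalization, or iterate the $i=0$ Corollary~\ref{14-Cor_j=k-1} together with the observation that adding a factor $Q(E)$ under the integral only shifts parameters). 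Collecting all contributions, one obtains the asserted form $\sum_z\kappa_{n,k,s,z}Q^z\TenCM{n-1}{r}{s-2z}{0}(K,\beta)$, and it remains to verify that the resulting multiple Gamma-function sum collapses to the closed form for $\kappa_{n,k,s,z}$.

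Finally, the passage from polytopes to general $K\in\cK^n$ is routine: both sides are weakly continuous in $K$ (the tensorial curvature measures extend weakly continuously to $\cK^n$ by Hug and Schneider \cite{14-HugSchn14}, and the Crofton integral inherits this by dominated convergence over $A(n,k)$, using the local boundedness of the support measures on a fixed bounded region), so density of $\cP^n$ in $\cK^n$ finishes the argument. The special value $\kappa_{n,k,s,\frac{s-1}2}$ for odd $s$ requires separate care because the generic formula involves $\Gamma(\frac s2-z+1)$ in a denominator that stays finite but the summation range in $\xi$ degenerates, or equivalently because the term $z=\frac{s-1}2$ is where the McMullen-type relation \eqref{14-Form_Rel_tenCM} feeds in an extra contribution (mirroring the exceptional case $z=\lfloor s/2\rfloor$ in Corollary~\ref{14-Cor_j=k-1} and Theorem~\ref{14-Thm_Main_Gen}); I expect the main obstacle to be precisely the bookkeeping of this exceptional index and the hypergeometric simplification of the double Gamma sum into the stated single-term $\kappa$, which is where an identity from the final section on sums of Gamma functions (or Zeilberger's algorithm, as already invoked for the consistency check with $k=1$) will be needed.
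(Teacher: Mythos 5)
Your overall strategy is the same as the paper's: reduce the extrinsic integral to the intrinsic weighted Crofton formula by first expressing $\TenCM{k-1}{r}{s}{0}(K\cap E,\nularg)$ through the $Q^aQ(E)^b\,\IntTenCM{k-1}{r}{c}{0}{E}(K\cap E,\nularg)$ (your spherical decomposition of $N(P,F)=N_E(P,F)\oplus L(E)^\perp$ is exactly a re-derivation of the localized McMullen relation, Lemma~\ref{14-Lem_McMullen_Int_Ext}), and then applying the $j=k-1$, general-$i$ case of Theorem~\ref{14-Thm_Main_Gen_Loc}, i.e.\ \eqref{Form_CF_Main_j=k-1}. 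Up to that point the plan is sound and matches the paper.

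The genuine gap is in the last step, which you treat as routine bookkeeping: ``it remains to verify that the resulting multiple Gamma-function sum collapses to the closed form for $\kappa_{n,k,s,z}$.'' After combining the McMullen expansion with the weighted intrinsic formula one is left with a threefold sum (over $l$, $p$, $q$) of Gamma-function terms. For even $s$ this sum does telescope — an inner alternating sum degenerates to a single index via the binomial theorem, and Lemma~\ref{14-Lem_Zeil_1} plus Legendre's duplication formula finish the job — and the paper carries this out. But for odd $s$ the inner sum does \emph{not} collapse, and the paper explicitly states that a direct evaluation of the resulting threefold sum $R$ ``still remains an open task.'' The authors circumvent this by observing that the coefficients $\kappa_{n,k,s,z}$ are independent of $r$ and $\beta$, specializing to $r=0$, $\beta=\R^n$, and comparing with the known global Crofton formula for translation invariant Minkowski tensors in \cite[Theorem 3]{14-BernHug15}; linear independence of the functionals $Q^z\TenCM{n-1}{0}{s-2z}{0}(K,\R^n)$ for $z\neq(s-1)/2$ then pins down those coefficients. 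Only the exceptional coefficient $\kappa_{n,k,s,(s-1)/2}$ — about which the global comparison says nothing, because $\MinTen{n-1}{0}{1}\equiv 0$ (not because of a degenerating summation range or an extra contribution from \eqref{14-Form_Rel_tenCM}, as you suggest) — is computed directly, using that for $z=(s-1)/2$ the $q$-sum reduces to a single term so that Lemma~\ref{14-Lem_Zeil_1} applies twice. Your proposal contains no substitute for this external input from \cite{14-BernHug15}, so as written it stalls exactly at the odd-$s$ simplification; you would either need to supply the missing hypergeometric identity yourself or import the global result as the paper does.
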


  In Theorem \ref{14-Thm_ExtrCF_j=k-1}, if $s$ is odd the coefficient $\kappa_{n, k, s,  ({s - 1})/ 2}$ has to be defined separately, as the proof shows. (In fact,  the difference amounts to a factor $ {k (n + s - 2)} [{(k - 1) (n + s - 1)}]^{-1}$.) For even  $s$, the  constants
	involved in the proof of Theorem \ref{14-Thm_ExtrCF_j=k-1} can be simplified by a direct calculation to arrive at the asserted result. However, if $s$ is odd, we need the connection to the work \cite{14-BernHug15} to simplify the
	constants. Since this connection breaks down for $z=(s-1)/2$, $s$ odd, a separate direct calculation is required for this case,
	and this finally yields the correct constant in \eqref{14-constkappa2}. The result is also consistent with the special case $k=1$ which is considered next.

  For $k = 1$ the Crofton integrals can be represented with a single functional, as the following theorem shows.

  \begin{theorem} \label{14-Thm_ExtrCF_k=1}
    Let $K \in \cK^{n}$, $\beta \in \cB(\R^{n})$ and $r, s \in \N_{0}$. Then
  \begin{align*}
    & \int_{A(n, 1)} \TenCM{0}{r}{s}{0} (K \cap E, \beta \cap E) \, \mu_1(\intd E) = \pi^{\frac {n - 2} 2} \frac {\Gamma(\frac {n} 2)} {\Gamma(\frac {n + 1} 2)} \frac {\Gamma(\lfloor \frac {s + 1} 2 \rfloor + \frac 1 2)} {\Gamma(\frac {n} 2 + \lfloor \frac {s + 1} 2 \rfloor)} Q^{\lfloor \frac s 2 \rfloor} \TenCM{n - 1}{r}{s - 2\lfloor \frac s 2 \rfloor}{0} (K, \beta).
  \end{align*}
  \end{theorem}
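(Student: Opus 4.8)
\textbf{Proof plan for Theorem~\ref{14-Thm_ExtrCF_k=1}.}
The plan is to derive this as the case $k=1$ of the general extrinsic machinery, using the explicit polytope formula for the tensorial curvature measures together with the intrinsic Crofton formula in Theorem~\ref{14-Thm_Main_k=1}. By the weak continuity of the functionals $\TenCM{j}{r}{s}{0}(\nularg,\beta)$ on $\cK^n$ (see the remark after the definition, via Hug and Schneider \cite{14-HugSchn14}), it suffices to prove the identity for a polytope $P\in\cP^n$; both sides are then weakly continuous measures in $\beta$, and a standard approximation argument extends the result to general convex bodies. For a line $E\in A(n,1)$, the set $P\cap E$ is a (possibly degenerate) segment, and $\TenCM{0}{r}{s}{0}(P\cap E,\beta\cap E)$ picks up contributions from the at most two endpoints of the segment, each weighted by the power $u^s$ of the unit normal integrated over the one-dimensional normal cone within $L(E)^\perp\oplus\ldots$; the subtlety is that the extrinsic curvature measure $\TenCM{0}{r}{s}{0}$ of a segment in $\R^n$ measures the full $(n-1)$-dimensional normal cone of the segment in $\R^n$, whereas the intrinsic one $\IntTenCM{0}{r}{s}{0}{E}$ only sees the two normal directions inside $E$.

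The key step is therefore to relate the extrinsic integrand to the intrinsic one. First I would write, for a point $x$ that is a vertex of the segment $P\cap E$ with inner unit tangent direction $v\in L(E)\cap\Sn$, the normal cone of $P\cap E$ at $x$ taken in $\R^n$ as $N=\{u\in\R^n: u\cdot v\le 0\}$, a halfspace through the origin. Integrating $u^s$ over $N\cap\Sn$ with respect to $\cH^{n-1}$ and comparing with the intrinsic contribution (which is just $(-v)^s$ on the one-point normal cone in $E$) produces a ratio that only depends on $n$ and $s$: by symmetry the integral $\int_{N\cap\Sn}u^s\,\cH^{n-1}(\intd u)$ is a polynomial in $Q$ and $v^{s-2\lfloor s/2\rfloor}$ times a constant. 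Concretely, integrating the tensor $u^s$ over a hemisphere of $\Sn$ with pole $-v$ yields $c_{n,s}\,Q^{\lfloor s/2\rfloor}(-v)^{s-2\lfloor s/2\rfloor}$ for an explicit $c_{n,s}$ expressible through Gamma functions (this is a standard moment computation; the odd-order mixed terms vanish by the reflection symmetry $u\mapsto u-2(u\cdot w)w$ for $w\perp v$, leaving only the stated monomial in $Q$ and $v$). Combining this with the representation of $\IntTenCM{0}{r}{s}{0}{E}$ as $\omega_1$ times the sum over the two endpoints of $x^r$ times $(\pm v)^s$, and using that $\TenCM{n-1}{r}{t}{0}$ on the right-hand side already carries whatever $v$-dependence survives, we get
\begin{equation*}
\TenCM{0}{r}{s}{0}(P\cap E,\beta\cap E)=c_{n,s}'\,Q^{\lfloor s/2\rfloor}\,\IntTenCM{0}{r}{\,s-2\lfloor s/2\rfloor}{0}{E}(P\cap E,\beta\cap E)
\end{equation*}
pointwise for $\mu_1$-almost every $E$, with $c_{n,s}'$ the ratio of the two normalizing constants $\omega$ and the moment constant $c_{n,s}$.

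Once this pointwise reduction is in place, I would integrate both sides over $A(n,1)$ against $\mu_1$, pull the tensor $Q^{\lfloor s/2\rfloor}$ (which is independent of $E$) outside the integral, and apply Theorem~\ref{14-Thm_Main_k=1} with $i=0$ and with $s$ replaced by $s-2\lfloor s/2\rfloor\in\{0,1\}$. In the even case $s-2\lfloor s/2\rfloor=0$ the theorem's first formula with $s=0,i=0$ gives a multiple of $\TenCM{n-1}{r}{0}{0}(K,\beta)$ with coefficient $\Gamma(\frac n2)\Gamma(\frac12)/(\pi\Gamma(\frac{n+1}2))$; in the odd case it gives a multiple of $\TenCM{n-1}{r}{1}{0}(K,\beta)$ with coefficient $\Gamma(\frac n2)\Gamma(1)/(\sqrt\pi\,\Gamma(\frac{n+2}2))$. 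Writing $\lfloor\frac{s+1}2\rfloor=\lfloor\frac s2\rfloor$ for even $s$ and $\lfloor\frac{s+1}2\rfloor=\lfloor\frac s2\rfloor+1$ for odd $s$, one checks directly that in both parities the product $c_{n,s}'$ times the intrinsic Crofton constant collapses, after one application of Legendre's duplication formula $\Gamma(z)\Gamma(z+\frac12)=2^{1-2z}\sqrt\pi\,\Gamma(2z)$, to the single closed form $\pi^{(n-2)/2}\,\Gamma(\frac n2)\Gamma(\lfloor\frac{s+1}2\rfloor+\frac12)/[\Gamma(\frac{n+1}2)\Gamma(\frac n2+\lfloor\frac{s+1}2\rfloor)]$ asserted in the theorem. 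The main obstacle I anticipate is purely bookkeeping: the even and odd cases of $s$ must be handled separately throughout (the exponent $s-2\lfloor s/2\rfloor$ flips, the constant $c_{n,s}$ has different Gamma-factor shapes, and Theorem~\ref{14-Thm_Main_k=1} itself splits), and one has to be careful that the normalization of $\TenCM{\cdot}{\cdot}{\cdot}{0}$ used here (with the factor $\omega_{n-j}$, slightly nonstandard as noted in the paper) is the one feeding into Theorem~\ref{14-Thm_Main_k=1}. The moment integral over the hemisphere and the final Gamma-function simplification are routine once set up with consistent conventions.
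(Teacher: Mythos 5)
Your reduction to polytopes, the identification of the normal cone of the segment $P\cap E$ at an endpoint with the halfspace $\{u:u\cdot v\le 0\}$, and the even-$s$ half of the argument are all sound: for even $s$ one has $u^s=(-u)^s$, so the hemisphere moment is half the full-sphere moment, hence a pure multiple of $Q^{s/2}$ with no $v$-dependence, and your pointwise reduction followed by Theorem~\ref{14-Thm_Main_k=1} with $i=0$ and $s=0$ gives the stated constant. The gap is in the odd case $s\ge 3$. The claimed moment identity
$$\int_{\{u\in\Sn:\,u\cdot v\le 0\}}u^s\,\cH^{n-1}(\intd u)=c_{n,s}\,Q^{\lfloor s/2\rfloor}(-v)^{s-2\lfloor s/2\rfloor}$$
is false there: decomposing $u=-tv+\sqrt{1-t^2}\,w$ with $t\in[0,1]$ and $w\in v^\perp\cap\Sn$, the even-in-$w$ terms of the binomial expansion survive with \emph{different} $t$-integrals, so the result is a genuine linear combination $\sum_{j=0}^{(s-1)/2}a_j\,Q(v^\perp)^j v^{s-2j}$ with non-proportional coefficients. (For $n=3$, $s=3$ one gets $\tfrac{\pi}{4}v^3-\tfrac{3\pi}{4}vQ$, which is not a multiple of $vQ$.) Consequently your pointwise relation $\TenCM{0}{r}{s}{0}(P\cap E,\nularg)=c'_{n,s}\,Q^{\lfloor s/2\rfloor}\IntTenCM{0}{r}{s-2\lfloor s/2\rfloor}{0}{E}(P\cap E,\nularg)$ fails, and the extra terms carry powers of $Q(E)=v^2$, so they do not vanish after integration over $A(n,1)$ without further argument.

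The correct pointwise relation is the localized McMullen identity (Lemma~\ref{14-Lem_McMullen_Int_Ext}), which expresses $\TenCM{0}{r}{s}{0}$ as a double sum of terms $Q^lQ(E)^{m-l}\IntTenCM{0}{r}{s-2m}{0}{E}$. After integrating, one therefore needs Theorem~\ref{14-Thm_Main_k=1} with general weight exponent $i=m-l$, not just $i=0$, and the collapse of the resulting multiple sum to the single term $Q^{(s-1)/2}\TenCM{n-1}{r}{1}{0}$ is a nontrivial cancellation, carried out in the paper via the binomial theorem, Legendre's duplication formula and the Gamma-sum Lemmas~\ref{14-Lem_Zeil_1} and~\ref{14-Lem_Zeil_2}. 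That cancellation is precisely what your shortcut assumes away. To repair the argument you would have to replace the single-monomial moment claim by the full expansion of the hemisphere moment (equivalently, invoke Lemma~\ref{14-Lem_McMullen_Int_Ext}) and redo the odd-$s$ bookkeeping, which essentially reproduces the paper's proof.
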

		
	It can be easily checked that the  result for $k=1$ can be obtained from the one for $k>1$ by a formal specialization and proper
		interpretation of expressions which a priori are not well defined. For this to work, it is indeed crucial that for odd values of $s$ and
		$z=(s-1)/2$ the definition in \eqref{14-constkappa2} applies.

		 In \cite[Proposition 4.10]{14-BernHug15},  an alternative basis of
		the vector space of continuous, translation invariant and rotation  covariant $\T^{p}$-valued valuations on $\cK^{n}$ was introduced,  based on the trace free part of the Minkowski tensors, which was called the $\Psi$-basis. In the same spirit (but locally and with the current normalization), we now define
  \begin{align*}
    \PTenCM{k}{r}{s}{0} := \TenCM{k}{r}{s}{0} + \frac 1 {\sqrt \pi} \sum_{j = 1}^{\lfloor \frac s 2 \rfloor} (-1)^{j} \binom{s}{2j} \frac{\Gamma(j + \frac 1 2) \Gamma(\frac n 2 + s - j - 1)}{\Gamma(\frac n 2 + s - 1)} Q^{j} \TenCM{k}{r}{s - 2j}{0}
  \end{align*}
	for $r,s\in \N_0$ and $k\in\{0,\ldots,n-1\}$.
  Interpreting this definition in the right way if $n=2$ and $s=0$ (where $\PTenCM{k}{r}{0}{0}=\TenCM{k}{r}{0}{0}$), we can also write
  \begin{align}
    \PTenCM{k}{r}{s}{0} = \frac 1 {\sqrt \pi} \sum_{j = 0}^{\lfloor \frac s 2 \rfloor} (-1)^{j} \binom{s}{2j} \frac{\Gamma(j + \frac 1 2) \Gamma(\frac n 2 + s - j - 1)}{\Gamma(\frac n 2 + s - 1)} Q^{j} \TenCM{k}{r}{s - 2j}{0}. \label{14-Def_Psi_Basis}
  \end{align}
In particular, $\PTenCM{k}{r}{s}{0}=\TenCM{k}{r}{s}{0}$ for $s\in\{0,1\}$. Conversely, we have
  \begin{align}
    \TenCM{k}{r}{s}{0} = \frac 1 {\sqrt \pi} \sum_{j = 0}^{\lfloor \frac s 2 \rfloor} \binom{s}{2j} \frac{\Gamma(j + \frac 1 2) \Gamma(\frac n 2 + s - 2j)}{\Gamma(\frac n 2 + s - j)} Q^{j} \PTenCM{k}{r}{s - 2j}{0}. \label{14-Form_Phi_Psi_Basis}
  \end{align}
Although this will not be needed explicit, it shows how we can switch between a $\phi$-representation and a $\psi$-representation of
tensorial curvature measures.

  The main advantage of the new local tensor valuations given in \eqref{14-Def_Psi_Basis} is that the Crofton formula takes a particularly simple form.
	
  \begin{corollary}\label{14-Cor_CF_Psi_Basis}
    Let $K \in \cK^{n}$, $\beta \in \cB(\R^{n})$, and let $k, r, s \in \N_{0}$ with $0 < k < n$. Then
    \begin{align*}
      & \int_{A(n, k)} \PTenCM{k - 1}{r}{s}{0} (K \cap E, \beta \cap E) \,\mu_{k}(\intd E) \\
      & \qquad = \pi^{\frac {n - k} 2} \frac{k - 1} {n - 1} \frac {\Gamma(\frac {n} 2) \Gamma(\frac{k + s - 1}{2})} {\Gamma(\frac {k} 2) \Gamma(\frac{n + s - 1}{2})} \frac{ \Gamma(\frac{s + 1}{2})}{\Gamma(\frac{n - k + s + 1}{2})} \,\PTenCM{n - 1}{r}{s}{0} (K, \beta).
    \end{align*}
  \end{corollary}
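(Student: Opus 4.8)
The plan is to substitute the definition \eqref{14-Def_Psi_Basis} of $\PTenCM{k-1}{r}{s}{0}$ into the Crofton integral, apply Theorem~\ref{14-Thm_ExtrCF_j=k-1} term by term to each $\TenCM{k-1}{r}{s-2j}{0}(K\cap E,\beta\cap E)$, and then collect the resulting powers of $Q$ times $\TenCM{n-1}{r}{\ast}{0}(K,\beta)$ into a single $\psi$-tensor by recognizing the coefficient of each $Q^{m}\TenCM{n-1}{r}{s-2m}{0}(K,\beta)$ as (a constant times) the corresponding coefficient in \eqref{14-Def_Psi_Basis} with $k$ replaced by $n-1$ and $s$ unchanged. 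Since the functionals $Q^{m}\TenCM{n-1}{r}{s-2m}{0}(K,\nularg)$ are linearly independent, it suffices to verify the scalar identity for each fixed $m$: the left side contributes a double sum over the expansion index $j$ (from $\psi$) and the Crofton index $z$ (from Theorem~\ref{14-Thm_ExtrCF_j=k-1}) subject to $j+z=m$, while the right side contributes the single term coming from expanding $\PTenCM{n-1}{r}{s}{0}$ on the right.

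Concretely, first I would write
\begin{align*}
  \int_{A(n,k)} \PTenCM{k-1}{r}{s}{0}(K\cap E,\beta\cap E)\,\mu_k(\intd E)
  = \frac{1}{\sqrt\pi}\sum_{j=0}^{\lfloor s/2\rfloor}(-1)^j\binom{s}{2j}
  \frac{\Gamma(j+\tfrac12)\Gamma(\tfrac n2+s-j-1)}{\Gamma(\tfrac n2+s-1)}\,
  Q^j\!\!\int_{A(n,k)}\!\!\TenCM{k-1}{r}{s-2j}{0}(K\cap E,\beta\cap E)\,\mu_k(\intd E),
\end{align*}
using that $Q(E)^{0}=1$ (so only $i=0$ of Theorem~\ref{14-Thm_ExtrCF_j=k-1} is needed) and that $Q$ pulls out of the integral. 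Then I would insert Theorem~\ref{14-Thm_ExtrCF_j=k-1} with $s$ replaced by $s-2j$, so the $z$-sum there runs to $\lfloor s/2\rfloor - j$, and reindex by $m=j+z$. Comparing with the target right-hand side, the claim reduces to showing, for each $m\in\{0,\dots,\lfloor s/2\rfloor\}$,
\begin{align*}
  \frac{1}{\sqrt\pi}\sum_{j=0}^{m}(-1)^j\binom{s}{2j}
  \frac{\Gamma(j+\tfrac12)\Gamma(\tfrac n2+s-j-1)}{\Gamma(\tfrac n2+s-1)}\,
  \kappa_{n,k,s-2j,m-j}
  = C_{n,k,s}\cdot\frac{(-1)^m}{\sqrt\pi}\binom{s}{2m}
  \frac{\Gamma(m+\tfrac12)\Gamma(\tfrac n2+s-m-1)}{\Gamma(\tfrac n2+s-1)},
\end{align*}
where $C_{n,k,s}$ is the asserted prefactor $\pi^{(n-k)/2}\frac{k-1}{n-1}\frac{\Gamma(n/2)\Gamma((k+s-1)/2)}{\Gamma(k/2)\Gamma((n+s-1)/2)}\frac{\Gamma((s+1)/2)}{\Gamma((n-k+s+1)/2)}$, and where one must take care of the exceptional value of $\kappa$ from \eqref{14-constkappa2} when $m-j=(s-2j-1)/2$, i.e.\ when $s$ is odd and $m=(s-1)/2$.

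The main obstacle will be the hypergeometric summation identity in the last display: after cancelling the obvious common factors and substituting the generic formula for $\kappa_{n,k,s-2j,m-j}$, the inner sum over $j$ becomes a terminating balanced ${}_3F_2$-type sum in $j$, and one has to show it telescopes/evaluates to the single stated term. I expect this to follow either from the Chu--Vandermonde or Pfaff--Saalsch\"utz identity after rewriting the Gamma quotients as Pochhammer symbols, or — mirroring the remark that Corollary~\ref{14-Cor_j=k-1} and Theorem~\ref{14-Thm_Main_Gen} are handled analogously — by invoking the Gamma-function summation lemmas announced for the final section of the paper. The odd-$s$, $m=(s-1)/2$ case needs a separate check: one verifies that using the corrected constant \eqref{14-constkappa2} (which differs from the generic expression by the factor $k(n+s-2)/[(k-1)(n+s-1)]$ noted after Theorem~\ref{14-Thm_ExtrCF_j=k-1}) is exactly what makes the top term of the sum fit the same closed form, so that no case distinction survives in the final $\psi$-to-$\psi$ statement. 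Finally, the degenerate conventions ($n=2$, $s=0$) are consistent because there $\PTenCM{k-1}{r}{0}{0}=\TenCM{k-1}{r}{0}{0}$ and the identity collapses to the $m=0$ term, which is immediate from Theorem~\ref{14-Thm_ExtrCF_j=k-1}.
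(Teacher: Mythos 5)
Your plan follows the paper's own proof almost step for step: substitute \eqref{14-Def_Psi_Basis}, apply Theorem~\ref{14-Thm_ExtrCF_j=k-1} with $s$ replaced by $s-2j$, reindex by $m=j+z$, and reduce the claim to a terminating Gamma-quotient summation in $j$ for each fixed $m$, treating the exceptional coefficient \eqref{14-constkappa2} separately when $s$ is odd and $m=(s-1)/2$. The paper evaluates exactly this $j$-sum via Lemma~\ref{14-Lem_Zeil_jz} (and, for the exceptional index, Lemma~\ref{14-Lem_Zeil_jt}), which are Zeilberger-certified identities equivalent to the Pfaff--Saalsch\"utz-type evaluation you anticipate; your identification of the reduced identity and of the role of the corrected constant in making the $m=(s-1)/2$ term fit the same closed form is precisely what happens.

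There is one genuine gap: the corollary is stated for $0<k<n$, so $k=1$ is included, but Theorem~\ref{14-Thm_ExtrCF_j=k-1} requires $k>1$ and your argument invokes only that theorem. For $k=1$ and $s\ge 2$ the right-hand side vanishes because of the factor $k-1$, so one must show the left-hand side vanishes too; this does not follow from your scheme and requires a separate computation. The paper does this by inserting Theorem~\ref{14-Thm_ExtrCF_k=1} into \eqref{14-Form_CF_Psi_Basis_1} and showing, via Legendre's duplication formula and Lemma~\ref{14-Lem_Zeil_1} (using \eqref{14-proofLem7below}), that the resulting alternating sum over $j$ is zero for $s\ge 2$. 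A second, cosmetic point: Theorem~\ref{14-Thm_ExtrCF_j=k-1} carries no weight $Q(E)^i$, so your parenthetical about ``only $i=0$ being needed'' conflates it with the intrinsic formula \eqref{Form_CF_Main_j=k-1}; this does not affect the argument.
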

	
  For $r = 0$ and $\beta = \R^{n}$, Corollary \ref{14-Cor_CF_Psi_Basis} coincides with \cite[Corollary 6.1]{14-BernHug15} (in the case corresponding to $j=k-1$). If $s \in \{ 0, 1 \}$, then $\PTenCM{k}{r}{s}{0} = \TenCM{k}{r}{s}{0}$ and Corollary \ref{14-Cor_CF_Psi_Basis} coincides with Theorem~\ref{14-Thm_ExtrCF_j=k-1} (resp.~Theorem~\ref{14-Thm_ExtrCF_k=1}, for $k = 1$). If $k = 1$, then the integral in Corollary \ref{14-Cor_CF_Psi_Basis} vanishes, except for $s \in \{ 0, 1 \}$.

        \section{Proofs of the Main Results}\label{14-sec4}

        In this section, we first recall some results from
        \cite{14-HugSchnSchu08}. Then we prove an integral formula
        which is required in the following. Finally, all ingredients
        are combined for the proofs of our main theorems.

        A basic tool is the following transformation formula (see
        \cite[Corollary~4.2]{14-HugSchnSchu08}). It can be used to
        carry out an integration over linear Grassmann spaces
        recursively. The result is also true for $k=1$, but in this
        case the outer integration on the right-hand side is trivial.

        \begin{lemma} \label{14-Lem_Transf_Form} Let $u \in \Sn$ and
          let $h: G(n, k) \rightarrow \T^{p}$ be an integrable
          function for $k, p \in \N_{0}$, $0 < k < n$. Then
          \begin{align*}
            \MoveEqLeft \int _{G(n, k)} h(L) \, \nu_k (\intd L) =
            \frac {\omega_{k}} {2 \omega_{n}} \int _{G(u^\perp, k -
              1)} \int _{-1}^{1} \int _{U^{\perp} \cap u^{\perp} \cap
              \Sn} \abs{t}^{k - 1} ( 1 - t^2 )^{\frac {n - k - 2} 2}
            \\
            & \times h \bigl( \Span \bigl\{ U, t u + \sqrt{1 - t^2} w
            \bigr\} \bigr) \, \cH^{n - k - 1} (\intd w) \, \intd t \,
            \nu^{u^\perp}_{k - 1} (\intd U).
          \end{align*}
        \end{lemma}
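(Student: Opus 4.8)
The plan is to prove the identity by disintegrating the rotation invariant probability measure $\nu_k$ on $G(n,k)$ along the map that records how a $k$-plane sits relative to the line $\R u$, and then computing the resulting one-dimensional density by an elementary distributional argument. One may assume $k\geq 2$: for $k=1$ the space $G(u^\perp,0)$ is a single point, the outer integration is trivial, and the asserted formula is precisely the expression of $\nu_1$ in spherical coordinates on $\Sn$ about the axis $u$.

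First I would set up the parametrization. Let $H$ be the group of orthogonal maps of $\R^n$ fixing $u$; recall that $\nu_k$ is invariant under all of $\mathrm O(n)$, hence in particular $H$-invariant, and likewise $\nu_{k-1}^{u^\perp}$ is the $H$-invariant probability on $G(u^\perp,k-1)$. For $L\in G(n,k)$ with $p_L(u)\neq 0$ — which excludes only the $\nu_k$-null set $\{L:L\subset u^\perp\}$ — the subspace $U(L):=L\cap u^\perp$ has dimension $k-1$; taking $v$ to be a unit vector spanning $L\cap U(L)^\perp$, and setting $t:=u\cdot v$ and (for $t\neq 0$) $w:=(v-tu)/\sqrt{1-t^2}\in U(L)^\perp\cap u^\perp\cap\Sn$, we obtain $L=\Span\{U(L),v\}$ with $v=tu+\sqrt{1-t^2}\,w$ and $|p_L(u)|=|t|$. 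Thus $\Phi:L\mapsto(U(L),|t|)$ maps $G(n,k)$, off a null set, into $G(u^\perp,k-1)\times(0,1)$, is $H$-equivariant (acting on the first coordinate, trivially on the second), and conversely $\Span\{U,tu+\sqrt{1-t^2}\,w\}\in G(n,k)$ for every $U\in G(u^\perp,k-1)$, $t\in[-1,1]$ and $w\in U^\perp\cap u^\perp\cap\Sn$.

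Next I would disintegrate $\nu_k$ along $\Phi$. Since $H$ acts transitively on $G(u^\perp,k-1)$, the push-forward $\Phi_*\nu_k$ is a product $\nu_{k-1}^{u^\perp}\otimes\lambda$ for a probability measure $\lambda$ on $[0,1]$. For fixed $(U,\tau)$ the fibre $\Phi^{-1}(U,\tau)$ is, via $w\mapsto\Span\{U,\tau u+\sqrt{1-\tau^2}\,w\}$, a sphere $\mathbb S^{n-k-1}=U^\perp\cap u^\perp\cap\Sn$ on which the stabilizer of $(U,\tau)$ in $H$ acts (through its action on $w$) as the full orthogonal group of $U^\perp\cap u^\perp$; hence the conditional measure of $\nu_k$ on this fibre is forced to be the uniform probability $\omega_{n-k}^{-1}\cH^{n-k-1}$. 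It remains to identify $\lambda$, the $\nu_k$-distribution of $|p_L(u)|$. By rotation invariance this equals the distribution of $|p_{L_0}(\xi)|$ for a fixed $L_0=\Span\{e_1,\ldots,e_k\}$ and $\xi$ uniform on $\Sn$; since $|p_{L_0}(\xi)|^2=\xi_1^2+\cdots+\xi_k^2$ follows a Beta distribution with parameters $\tfrac k2$ and $\tfrac{n-k}2$ (a classical fact, e.g.\ via the Gaussian representation of the uniform measure on $\Sn$), a change of variables shows that $\lambda$ has Lebesgue density $\tau\mapsto \tfrac{2}{B(k/2,(n-k)/2)}\,\tau^{k-1}(1-\tau^2)^{(n-k-2)/2}$ on $(0,1)$, where $B$ denotes the Beta function.

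Finally I would assemble the pieces; combining the last three observations gives
\[
\int_{G(n,k)}h(L)\,\nu_k(\intd L)=\frac{2}{\omega_{n-k}B(\tfrac k2,\tfrac{n-k}2)}\int_{G(u^\perp,k-1)}\int_0^1\int_{U^\perp\cap u^\perp\cap\Sn}t^{k-1}(1-t^2)^{\frac{n-k-2}2}\,h(\Span\{U,tu+\sqrt{1-t^2}\,w\})\,\cH^{n-k-1}(\intd w)\,\intd t\,\nu_{k-1}^{u^\perp}(\intd U).
\]
As $\Span\{U,-tu-\sqrt{1-t^2}\,w\}=\Span\{U,tu+\sqrt{1-t^2}\,w\}$, the involution $(t,w)\mapsto(-t,-w)$ rewrites $2\int_0^1$ as $\int_{-1}^1$ with $t^{k-1}$ replaced by $|t|^{k-1}$; this is the assertion, with leading constant $(\omega_{n-k}B(\tfrac k2,\tfrac{n-k}2))^{-1}$, which equals $\omega_k/(2\omega_n)$ by the elementary identity $\omega_m=2\pi^{m/2}/\Gamma(m/2)$. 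The only step that is not pure bookkeeping with invariant measures is the determination of the density of $\lambda$ — equivalently, the explicit law of the principal angle between a fixed $k$-plane and a uniform random line — but this is standard, so no genuine obstacle arises; the transitivity statements and the null-set and disintegration technicalities are routine.
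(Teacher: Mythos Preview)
The paper does not supply its own proof of this lemma; it is quoted verbatim as \cite[Corollary~4.2]{14-HugSchnSchu08} and used as a black box. So there is no in-paper argument to compare against.

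Your proof is correct. You disintegrate $\nu_k$ along the $H$-equivariant map $\Phi:L\mapsto(L\cap u^\perp,\,|p_L(u)|)$, where $H$ is the stabilizer of $u$ in $\mathrm O(n)$; transitivity of $H$ on $G(u^\perp,k-1)$ forces the push-forward to be a product $\nu_{k-1}^{u^\perp}\otimes\lambda$, and transitivity of the stabilizer of $(U,\tau)$ on the $w$-sphere forces the fibre measure to be uniform. The only analytic input is the density of $\lambda$, which you identify by the standard Beta law for $|p_{L_0}(\xi)|^2$ with $\xi$ uniform on $\Sn$; the resulting constant $(\omega_{n-k}B(\tfrac k2,\tfrac{n-k}2))^{-1}$ indeed equals $\omega_k/(2\omega_n)$, and the symmetrization $(t,w)\mapsto(-t,-w)$ converts the integral over $(0,1)$ into one over $(-1,1)$. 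All steps are justified, and the null set $\{L\subset u^\perp\}$ you excise is genuinely $\nu_k$-negligible since $k<n$. This is exactly the kind of coordinate-free invariance argument one expects for such a formula; the original reference derives it by a closely related but more hands-on parametrization, so your route is a clean reformulation rather than a genuinely different method.
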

			
        \medskip

        The next results are derived from the previous one (see
        \cite[Lemma 4.3 and Corollary 4.6]{14-HugSchnSchu08}).

        \begin{lemma} \label{14-Lem_Int_Form_1} Let $i, k \in \N_0$
          with $k \leq n$. Then
          \begin{equation*}
            \int _{G(n, k)} Q(L)^i \, \nu_{k} (\intd L) = \frac{\Gamma(\frac{n}{2}) \Gamma(\frac{k}{2} + i)}{\Gamma(\frac{n}{2} + i)\Gamma(\frac{k}{2})} Q^{i}.
          \end{equation*}
        \end{lemma}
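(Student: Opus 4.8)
The plan is to use rotation invariance to collapse the tensor identity to a single scalar, and then pin down that scalar by a standard Gaussian / chi-square computation.

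\emph{Step 1: reducing to a constant.} Put $T := \int_{G(n,k)} Q(L)^i\,\nu_k(\intd L)\in\T^{2i}$. Since $\nu_k$ is rotation invariant, $T$ is invariant under the orthogonal group, and since a symmetric $2i$-tensor is determined by the associated homogeneous polynomial $x\mapsto T(x^{2i})$ on $\R^n$ (polarization), it suffices to evaluate $T$ on powers of vectors. For $x\in\R^n$ one has $Q(L)(x,x)=\norm{p_L(x)}^2$, and because all arguments coincide the symmetrization in the tensor product is irrelevant, so $Q(L)^i(x^{2i})=\norm{p_L(x)}^{2i}$ and hence
\[
  T(x^{2i})=\int_{G(n,k)}\norm{p_L(x)}^{2i}\,\nu_k(\intd L).
\]
By homogeneity it is enough to know this for $x=u\in\Sn$; the rotation invariance of $\nu_k$ and the transitivity of $O(n)$ on $\Sn$ show that the value is a constant $c_{n,k,i}$ not depending on $u$. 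Therefore $T(x^{2i})=c_{n,k,i}\norm{x}^{2i}=c_{n,k,i}\,Q^i(x^{2i})$ for every $x$, and polarization yields $T=c_{n,k,i}\,Q^i$.

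\emph{Step 2: computing the constant.} Fix $L_0:=\Span\{e_1,\dots,e_k\}$ and write an $\nu_k$-distributed $L$ as $\rho L_0$ with $\rho$ Haar-distributed on $O(n)$; then $\norm{p_L(u)}^2=\norm{p_{L_0}(\rho^{-1}u)}^2$ with $\rho^{-1}u$ uniform on $\Sn$, so $c_{n,k,i}=\int_{\Sn}(v_1^2+\cdots+v_k^2)^i\,\sigma(\intd v)$, where $\sigma$ is the uniform probability measure on $\Sn$. Representing $\sigma$ via $v=g/\norm g$ for a standard Gaussian vector $g$ in $\R^n$ and using that $\norm g$ and $g/\norm g$ are independent, one gets $g_1^2+\cdots+g_k^2=\norm g^2\,(v_1^2+\cdots+v_k^2)$, whence $c_{n,k,i}$ equals the quotient of the $i$-th moments of $g_1^2+\cdots+g_k^2$ and of $\norm g^2=g_1^2+\cdots+g_n^2$. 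Since $g_1^2+\cdots+g_m^2$ has a $\chi^2_m$-distribution, its $i$-th moment is $2^i\,\Gamma(\tfrac m2+i)/\Gamma(\tfrac m2)$, and therefore
\[
  c_{n,k,i}=\frac{2^i\,\Gamma(\tfrac k2+i)/\Gamma(\tfrac k2)}{2^i\,\Gamma(\tfrac n2+i)/\Gamma(\tfrac n2)}
  =\frac{\Gamma(\tfrac n2)\,\Gamma(\tfrac k2+i)}{\Gamma(\tfrac n2+i)\,\Gamma(\tfrac k2)},
\]
which is the claimed value (the degenerate cases $k\in\{0,n\}$ and $i=0$ being immediate from the usual conventions for the Gamma function).

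\emph{Alternative route and expected difficulty.} One can instead proceed inductively in $k$ by inserting $h(L)=Q(L)^i$ into Lemma~\ref{14-Lem_Transf_Form}: writing $L=U\oplus\R v$ with $v=tu+\sqrt{1-t^2}\,w\perp U$, one has $Q(L)=Q(U)+v^2$, hence $Q(L)^i=\sum_{m=0}^i\binom im Q(U)^{i-m}v^{2m}$, and the result follows after carrying out the $w$- and $t$-integrations and applying an analogous spherical moment identity. This is closer in spirit to \cite{14-HugSchnSchu08}, but the only real obstacle there is the bookkeeping of the binomial expansion together with the nested spherical integrations; the invariance argument above sidesteps it. In either case there is no substantive difficulty beyond a routine computation.
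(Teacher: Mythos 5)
Your proof is correct, but it takes a genuinely different route from the one the paper relies on. The paper does not prove this lemma itself: it imports it as Lemma 4.3 of \cite{14-HugSchnSchu08}, where it is derived from the recursive Grassmannian integration formula (Lemma~\ref{14-Lem_Transf_Form} here) combined with spherical moment integrals --- essentially the ``alternative route'' you sketch at the end. Your main argument instead isolates the structural content first (a symmetric $2i$-tensor of the form $\int Q(L)^i\,\nu_k(\intd L)$ is determined by the $O(n)$-invariant homogeneous polynomial $x\mapsto\int\norm{p_L(x)}^{2i}\nu_k(\intd L)$, which by invariance is $c\norm{x}^{2i}$, whence $T=c\,Q^i$ by polarization) and then computes $c$ as a ratio of $\chi^2$-moments via the Gaussian representation of the uniform distribution on $\Sn$. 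Every step checks out: $Q(L)^i(x^{2i})=\norm{p_L(x)}^{2i}$ because all arguments coincide, the passage from $\nu_k$ to the uniform measure on the sphere is the standard duality $\norm{p_{\rho L_0}(u)}=\norm{p_{L_0}(\rho^{-1}u)}$, the independence of $\norm{g}$ and $g/\norm{g}$ is correct, and the moment formula for $\chi^2_m$ gives exactly the stated constant; the degenerate cases $k\in\{0,n\}$ and $i=0$ are consistent with the conventions recorded after the lemma in the paper. Your approach is shorter and avoids all binomial bookkeeping. What the recursive approach buys --- and the reason the paper and \cite{14-HugSchnSchu08} set it up that way --- is that the same machinery extends to the weighted integrals $\int_{G(n,k)}[F,L]^2Q(L)^i\,\nu_k(\intd L)$ and $\int_{G(n,k)}Q(L)^i\pi_L(u)^s\norm{p_L(u)}^{j-k}[F,L]^2\,\nu_k(\intd L)$ of Lemma~\ref{14-Lem_Int_Form_2} and Proposition~\ref{14-Prop_Int_Form}, where the integrand is no longer $O(n)$-invariant in a way that pins the answer down to a single scalar multiple of $Q^i$, so the pure invariance argument does not carry over.
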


        In Lemma~\ref{14-Lem_Int_Form_1}, we interpret the coefficient
        of the tensor on the right-hand side as 0, if $k = 0$ and $i
        \neq 0$, and as 1, if $k = i = 0$, as $\Gamma(0)^{-1} := 0$
        and $\frac{\Gamma(a)}{\Gamma(a)}=1$ for all $a \in \R$.

        \begin{lemma} \label{14-Lem_Int_Form_2} Let $i \in \N_0$, $k,r
          \in \{ 0, \ldots, n \}$ with $k + r \geq n$, and let $F \in
          G(n, r)$. Then
          \begin{align*}
            \int _{G(n, k)} [F, L]^{2} Q(L)^{i} \, \nu_{k} (\intd L) =
            {} & \frac {r! k!} {n! (k + r - n)!} \frac {\Gamma( \frac
              {n} 2 + 1) \Gamma( \frac k 2 + i)} {\Gamma( \frac n 2 +
              i + 1)\Gamma( \frac k 2 + 1)}
            \\
            & \times \bigl( ( \tfrac k 2 + i ) Q^i + i \tfrac {k - n}
            r Q^{i - 1} Q(F) \bigr).
          \end{align*}
        \end{lemma}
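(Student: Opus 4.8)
The plan is to recognize this as \cite[Corollary~4.6]{14-HugSchnSchu08} and to re-derive it from Lemma~\ref{14-Lem_Transf_Form} by induction on $n$. Write $s:=n-r=\dim F^{\perp}$ and abbreviate the left-hand side by $I(n,k,r,i)$. The boundary cases are settled directly: if $r=n$, then $[F,L]=1$ and $I$ is given by Lemma~\ref{14-Lem_Int_Form_1}; if $r=0$, then $k+r\geq n$ forces $k=n$, so $I=Q^{i}$; and if $k\in\{0,n\}$ the Grassmannian is a single point. In each case one checks the asserted formula, using the conventions $x\,\Gamma(x):=\Gamma(x+1)$ and $\Gamma(0)^{-1}:=0$. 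So assume $0<r<n$ and $0<k<n$.

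For the inductive step, fix a unit vector $u\in F^{\perp}$, so that $F\subset u^{\perp}\cong\R^{n-1}$, and apply Lemma~\ref{14-Lem_Transf_Form} to $h(L)=[F,L]^{2}Q(L)^{i}$. With $U\in G(u^{\perp},k-1)$, $w\in U^{\perp}\cap u^{\perp}\cap\Sn$, $t\in[-1,1]$ and $v:=tu+\sqrt{1-t^{2}}\,w$, one has $L=\Span\{U,v\}$ with $v\perp U$, hence
\[
  Q(L)=Q(U)+v^{2},\qquad [F,L]^{2}=t^{2}\,[F,U]^{2},
\]
the generalized sine $[F,U]$ being taken inside $u^{\perp}$; the second identity follows by writing $[F,L]^{2}$ as the Gram determinant of the projections onto $L$ of an orthonormal basis $u,f_{2},\dots,f_{s}$ of $F^{\perp}=\R u\oplus(F^{\perp}\cap u^{\perp})$ and forming the Schur complement with respect to the $u$-entry. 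Expanding $Q(L)^{i}=\sum_{m}\binom{i}{m}Q(U)^{i-m}v^{2m}$ and $v^{2m}$ binomially, integrating $w$ over $U^{\perp}\cap u^{\perp}\cap\Sn$ (odd powers vanish, $w^{2q}$ becomes an explicit $\Gamma$-multiple of $Q(U^{\perp}\cap u^{\perp})^{q}$) and $t$ against $\abs{t}^{k+1}(1-t^{2})^{(n-k-2)/2}$ (a Beta integral), one obtains $I(n,k,r,i)$ as a finite, explicitly $\Gamma$-weighted sum of terms $u^{2j}\,Q(U^{\perp}\cap u^{\perp})^{q}\int_{G(u^{\perp},k-1)}[F,U]^{2}Q(U)^{p}\,\nu^{u^{\perp}}_{k-1}(\intd U)$, and the last integral equals $I(n-1,k-1,r,p)$.

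To close the induction, first rewrite $Q(U^{\perp}\cap u^{\perp})=(Q-uu)-Q(U)$ and collect the powers of $Q(U)$; the induction hypothesis then turns each $I(n-1,k-1,r,p)$ into $C_{p}\bigl((\tfrac{k-1}{2}+p)(Q-uu)^{p}+p\,\tfrac{k-n}{r}(Q-uu)^{p-1}Q(F)\bigr)$ with an explicit constant $C_{p}$, so $I(n,k,r,i)$ becomes a $\Gamma$-combination of tensors $u^{2j}(Q-uu)^{a}Q(F)^{b}$ with $b\in\{0,1\}$. Since this is independent of the choice of $u\in F^{\perp}\cap\Sn$, one may average over $u$, replacing each $u^{2l}$ by the appropriate multiple of $Q(F^{\perp})^{l}$ and then using $Q(F^{\perp})=Q-Q(F)$, arriving at a combination of $Q^{i-\delta}Q(F)^{\delta}$ with $0\leq\delta\leq i$. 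The heart of the matter is that this combination collapses to $\delta\in\{0,1\}$ with exactly the stated coefficients, which amounts to a family of binomial-type $\Gamma$-summation identities (the vanishing of the $\delta\geq 2$ coefficients and the evaluation of the $\delta\in\{0,1\}$ ones), provable via Legendre's duplication formula and telescoping or hypergeometric summations in the spirit of the final section. I expect this simplification of the nested $\Gamma$-sums to be the main obstacle. A cleaner way to see why only $Q$ and $Q(F)$ can occur is to iterate the slicing step $s$ times with successive orthonormal vectors $u_{1},\dots,u_{s}\in F^{\perp}$, whose squares add up to $Q(F^{\perp})$ and whose orthogonal complement is $F$, so that the procedure terminates with an application of Lemma~\ref{14-Lem_Int_Form_1} in the ambient space $F$; but this merely relocates the same bookkeeping.
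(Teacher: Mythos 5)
Your geometric setup is sound, and your choice of route is the one the paper itself points to: the paper gives no proof of this lemma at all, importing it as \cite[Corollary~4.6]{14-HugSchnSchu08} with the remark that it is ``derived from'' the transformation formula of Lemma~\ref{14-Lem_Transf_Form}. Your boundary cases check out against Lemma~\ref{14-Lem_Int_Form_1}, and your Schur-complement justification of $[F,L]^{2}=t^{2}\bigl([F,U]^{(u^{\perp})}\bigr)^{2}$ is correct and is in fact more than the paper offers (it uses this identity without proof inside Proposition~\ref{14-Prop_Int_Form}). The reduction of the inner Grassmannian integral to $I(n-1,k-1,r,p)$, and the legitimacy of averaging the resulting $u$-dependent expression over $u\in F^{\perp}\cap\Sn$, are also fine.

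The gap is that you stop exactly where the lemma begins to have content. After the expansion and the averaging you are left with $I(n,k,r,i)=\sum_{\delta=0}^{i}c_{\delta}\,Q^{i-\delta}Q(F)^{\delta}$, where the $c_{\delta}$ are nested alternating Gamma sums, and the entire assertion of the lemma is that $c_{\delta}=0$ for $\delta\geq 2$ while $c_{0},c_{1}$ take the stated closed form. You do not exhibit these identities or evaluate the sums; you only declare them ``provable via Legendre's duplication formula and telescoping or hypergeometric summations'' and name them as the expected main obstacle. That is an announcement of the remaining work, not a proof of it --- and it is not a formality: the analogous collapses elsewhere in the paper each require a dedicated summation lemma (Lemma~\ref{14-Lem_Zeil_1} and its relatives in Section~\ref{14-sec5}, proved by certified telescoping). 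To close the argument you would need to write down the coefficient of $Q^{i-\delta}Q(F)^{\delta}$ explicitly and prove the corresponding vanishing and evaluation identities, most plausibly by repeated use of Lemma~\ref{14-Lem_Zeil_1} (note that with $a=$ the relevant half-integer and $q=\delta$-dependent parameters, the factor $\Gamma(b-a)^{-1}$ produces exactly the kind of vanishing you need for $\delta\geq2$). Your closing alternative --- iterating the slicing $s$ times down to Lemma~\ref{14-Lem_Int_Form_1} inside $F$ --- does not escape this either, as you concede. As it stands, the proposal is a viable plan whose decisive computational step is missing.
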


        We interpret the second summand on the right-hand side of
        Lemma~\ref{14-Lem_Int_Form_2} as 0, if $i = 0$, which is
        consistent with \cite[Lemma 4.4]{14-HugSchnSchu08}. If $r =
        0$, we also interpret the second summand as 0 and the integral
        on the left equals $Q^i$.

        Finally, we state the following integral formula (see
        \cite[p. 503]{14-HugSchnSchu08}), which is a special case of
        \cite[Theorem 3.1]{14-Rataj99}.

        \begin{lemma} \label{14-Croft_Form_Rataj} Let $P \in \cP^n$ be
          a polytope, $L \in G(n, k)$ for $0 \leq j < k < n$ and let
          $g: \R^n \times (\Sn \cap L) \rightarrow \T$ be a measurable
          bounded function. Then
          \begin{align*}
            \MoveEqLeft[3] \int _{L^\perp} \int_{L_t \times (L \cap
              \Sn)} g(x, u) \, \Lambda_j^{(L_t)}(P \cap L_t, \intd
            (x,u)) \, \cH^{n - k}(\intd t)
            \\
            = {} & \frac 1 {\omega_{k - j}} \sum_{F \in \cF_{n - k +
                j}(P)} \, \int_{F \times (N(P,F) \cap \Sn)} g(x,
            \pi_L(u)) \norm{p_L(u)}^{j - k} [ F, L ]^2 \, \cH^{n -
              1}(\intd (x, u)).
          \end{align*}
        \end{lemma}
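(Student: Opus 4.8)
The plan is to reduce everything to the explicit polytopal formula for support measures recalled above and then to perform a coarea change of variables. Since $g$ is bounded and measurable, it suffices to prove the identity for $g=\1_{\eta}$ with $\eta\in\cB(\R^{n}\times(L\cap\Sn))$, i.e.\ to identify the two measures on $\R^{n}\times(L\cap\Sn)$ defined by the two sides. First I would substitute $\Lambda_{j}^{(L_{t})}(P\cap L_{t},\cdot)=\omega_{k-j}^{-1}\sum_{G\in\cF_{j}(P\cap L_{t})}(\cH^{j}|_{G})\otimes(\cH^{k-j-1}|_{N_{L}(P\cap L_{t},G)\cap\Sn})$ into the left-hand side and interchange the finite sum over faces with the integration over $t\in L^{\perp}$, so that the task becomes to control how the $j$-faces of the slice $P\cap L_{t}$ and their normal cones in $L$ depend on $t$.

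I would then record the standard description of slices of polytopes: for $\cH^{n-k}$-almost every $t\in L^{\perp}$, every $j$-face of $P\cap L_{t}$ is of the form $F\cap L_{t}$ for a unique $F\in\cF_{n-k+j}(P)$ with $L(F)+L=\R^{n}$, and the relative interior of $F\cap L_{t}$ lies in that of $F$; the faces $F\in\cF_{n-k+j}(P)$ with $L(F)+L\neq\R^{n}$ meet only an $\cH^{n-k}$-null set of translates $L_{t}$ and are therefore irrelevant on the left (they also satisfy $[F,L]=0$, since $[F,L]=[L(F)^{\perp},L^{\perp}]$ vanishes unless $L(F)+L=\R^{n}$, so they contribute nothing on the right either). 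Next I would prove the normal-cone identity $N_{L}(P\cap L_{t},F\cap L_{t})=p_{L}(N(P,F))$ for the face $F$ that carries the relative interior of $F\cap L_{t}$. Since $\ker p_{L}\cap L(F)^{\perp}=(L(F)+L)^{\perp}=\{0\}$, the projection $p_{L}$ maps $L(F)^{\perp}$ isomorphically onto its image with (constant) Jacobian equal to the subspace determinant $[F,L]$, and hence $u\mapsto\pi_{L}(u)$ restricts to a diffeomorphism of $N(P,F)\cap\Sn$ onto $N_{L}(P\cap L_{t},F\cap L_{t})\cap(L\cap\Sn)$ whose Jacobian at $u$ equals $[F,L]\,\norm{p_{L}(u)}^{j-k}$ by the usual formula for radial reparametrizations.

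To conclude I would apply the coarea formula to the linear map $p_{L^{\perp}}|_{F}\colon F\to L^{\perp}$: its fibres are the slices $F\cap L_{t}$ and its $(n-k)$-dimensional Jacobian is the constant $[F,L]$, so $\int_{L^{\perp}}\int_{F\cap L_{t}}(\cdots)\,\cH^{j}(\intd x)\,\cH^{n-k}(\intd t)=[F,L]\int_{F}(\cdots)\,\cH^{n-k+j}(\intd x)$. Substituting this together with the change of variables in the normal directions, each face $F$ picks up one factor $[F,L]$ from the slicing of positions and one from the reparametrization of directions, i.e.\ the factor $[F,L]^{2}$, together with the weight $\norm{p_{L}(u)}^{j-k}$ and the unchanged prefactor $\omega_{k-j}^{-1}$; summing over $F\in\cF_{n-k+j}(P)$ then reproduces exactly the right-hand side.

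I expect the main obstacle to be the precise bookkeeping of the two Jacobian factors and the verification that both equal $[F,L]$ --- here the identity $[U,L]=[U^{\perp},L^{\perp}]$, valid exactly when $U+L=\R^{n}$, is what makes the two distortions merge into a single squared factor --- together with a clean proof of the normal-cone identity $N_{L}(P\cap L_{t},F\cap L_{t})=p_{L}(N(P,F))$, which is elementary for polytopes but uses the face--section correspondence. As indicated in the statement, one may instead bypass the direct argument: the formula is a special case of Rataj's Crofton formula for normal cycles \cite[Theorem~3.1]{14-Rataj99}, because polytopes have positive reach, the curvature measures occurring there restrict on polytopes to the support measures $\Lambda_{j}$ in the normalization fixed above, and matching the constants yields the displayed identity, as recorded in \cite[p.~503]{14-HugSchnSchu08}.
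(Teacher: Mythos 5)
Your proposal is correct, and it is worth noting that the paper itself does not prove this lemma at all: it is simply quoted as a special case of Rataj's Crofton formula for sets with positive reach \cite[Theorem~3.1]{14-Rataj99}, in the form recorded in \cite[p.~503]{14-HugSchnSchu08}. What you supply instead is a direct, self-contained derivation in the polytopal case, which is exactly the generality the lemma is stated in. The three ingredients you isolate are the right ones and fit together as claimed: (i) for $\cH^{n-k}$-a.e.\ $t$ the $j$-faces of $P\cap L_t$ are precisely the sets $F\cap L_t$ with $F\in\cF_{n-k+j}(P)$ in general position relative to $L$, the degenerate faces being harmless on both sides since $[F,L]=0$ for them; (ii) the normal-cone identity $N_{L}(P\cap L_t,F\cap L_t)=p_L(N(P,F))$, which is cleanest via polarity within $L$ applied to the tangent cone $T(P,x)\cap L$; (iii) the two Jacobian factors, namely the coarea factor of $p_{L^\perp}|_{L(F)}$ and the volume distortion of $p_L|_{L(F)^\perp}$, both equal to $[F,L]$ by the symmetry $[L(F),L]=[L(F)^\perp,L^\perp]$, with the radial renormalization of $p_L(u)$ to the unit sphere contributing the additional weight $\norm{p_L(u)}^{j-k}$. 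This is a legitimate alternative to the citation: Rataj's theorem buys generality (positive reach, hence also the non-polytopal case without approximation), while your argument buys transparency and makes the provenance of the factor $[F,L]^2\norm{p_L(u)}^{j-k}$ and of the prefactor $\omega_{k-j}^{-1}$ completely explicit. The only points that would need to be written out carefully in a full proof are the reverse inclusion in (ii) (the polarity argument, rather than just the easy inclusion $p_L(N(P,F))\subseteq N_L(P\cap L_t,F\cap L_t)$) and the identification of $\cH^{n-1}$ on $F\times(N(P,F)\cap\Sn)$ with the product measure, both of which are standard.
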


        \subsection{Auxiliary Integral Formulae}\label{14-sec4.1}

        With the preliminary results from \cite{14-HugSchnSchu08} we
        are able to establish the following integral formula, which is
        a slightly modified version of
        \cite[Proposition~4.7]{14-HugSchnSchu08}.
        \begin{proposition} \label{14-Prop_Int_Form} Let $i, j, k, s
          \in \N_0$ with $j < k < n$ and $k > 1$, $F \in G(n, n - k +
          j)$ and $u \in F^{\perp} \cap \Sn$. Then
          \begin{align*}
            \MoveEqLeft \int _{G(n, k)} Q(L)^i \pi_L(u)^s \norm{p_L(u)
            }^{j - k} [ F, L ]^2 \, \nu_k(\intd L)
            \\
            & = \gamma_{n, k, j} \sum_{z = 0}^{\lfloor \frac s 2
              \rfloor + i} \bigl( \lambda_{n, k, j, s, i, z}^{(0)} u^2
            + \lambda_{n, k, j, s, i, z}^{(1)} Q(F) \bigr) Q^{z} u^{s
              + 2i - 2z - 2},
          \end{align*}
          where the coefficients are defined as in
          Theorem~\ref{14-Thm_Main_Gen_Loc}.
        \end{proposition}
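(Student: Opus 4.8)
The plan is to follow the strategy of \cite[proof of Prop.~4.7]{14-HugSchnSchu08}: the integration over $G(n, k)$ is carried out recursively by means of the transformation formula in Lemma~\ref{14-Lem_Transf_Form}, and the resulting iterated integrals are then evaluated with the classical spherical moment formula, a Beta integral, and Lemma~\ref{14-Lem_Int_Form_2}.

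First I would apply Lemma~\ref{14-Lem_Transf_Form} with the given vector $u \in F^{\perp} \cap \Sn$. For $L = \Span\{U, v\}$ with $U \in G(u^{\perp}, k - 1)$, $w \in U^{\perp} \cap u^{\perp} \cap \Sn$ and $v := t u + \sqrt{1 - t^{2}}\, w$ (a unit vector orthogonal to $U$), the integrand simplifies: since $u \perp U$ and $u \cdot v = t$, the projection of $u$ onto $L$ is $p_{L}(u) = t v$, hence $\norm{p_{L}(u)} = \abs{t}$ and $\pi_{L}(u) = \sign(t)\, v$, while $Q(L) = Q(U) + v^{2}$. The key point is the identity $[F, L]^{2} = t^{2}\, [F, U]^{2}$: because $F \subset u^{\perp}$ and $v \perp U$, in the Gram determinant defining $[F, L]$ only the $u$-component of $v$ survives, the complementary part of $v$ lying together with $F$ and $U$ inside $u^{\perp}$ and thus producing a degenerate determinant. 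Combined with the weight $\abs{t}^{k - 1}$ from Lemma~\ref{14-Lem_Transf_Form} and the factor $\norm{p_{L}(u)}^{j - k} = \abs{t}^{j - k}$, the total power of $\abs{t}$ is $\abs{t}^{j + 1}$, which is integrable; this is where the hypotheses $k > 1$ (so that $k - 1 \geq 1$, keeping the outer Grassmannian nontrivial) and $k < n$ enter.

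Next I would expand and integrate in three steps. Writing $Q(L)^{i} \pi_{L}(u)^{s} = \sign(t)^{s} \sum_{p = 0}^{i} \binom{i}{p} Q(U)^{p} v^{s + 2i - 2p}$ and $v^{s + 2i - 2p} = \sum_{\ell} \binom{s + 2i - 2p}{\ell} t^{s + 2i - 2p - \ell} (1 - t^{2})^{\ell / 2} u^{s + 2i - 2p - \ell} w^{\ell}$, the integration over $w \in U^{\perp} \cap u^{\perp} \cap \Sn$ annihilates the odd powers of $w$ and, for $\ell = 2q$, contributes $\int w^{2q} \propto Q(U^{\perp} \cap u^{\perp})^{q}$ with a constant involving $\Gamma(q + \tfrac{1}{2})$; the integration over $t \in [-1, 1]$ is a Beta integral yielding the ratio $\Gamma(\tfrac{j + s}{2} + i - p - q + 1)\,\Gamma(\tfrac{n - k}{2} + q) / \Gamma(\tfrac{n - k + j + s}{2} + i - p + 1)$; and the integration over $U \in G(u^{\perp}, k - 1)$ is performed after substituting $Q(U^{\perp} \cap u^{\perp}) = Q(u^{\perp}) - Q(U)$ and invoking Lemma~\ref{14-Lem_Int_Form_2} in the ambient space $u^{\perp}$ (with $k - 1$ in place of $k$ and $r = n - k + j = \dim F$), which produces exactly the two-term structure, with $\vartheta^{(0)}$ arising from the first summand of Lemma~\ref{14-Lem_Int_Form_2} and $\vartheta^{(1)}$ from the second (the one carrying the factor $Q(F)$). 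Finally, writing $Q(u^{\perp}) = Q - u^{2}$ and collecting by powers $Q^{z} u^{s + 2i - 2z}$ and $Q^{z} Q(F) u^{s + 2i - 2z - 2}$ gives the asserted expansion, the binomial $\binom{p + q - \varepsilon}{z}$ and the sign $(-1)^{p + q - z}$ coming from these last two binomial expansions.

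The main obstacle is the bookkeeping of the resulting multiple sums and the verification that the constants collapse to $\gamma_{n, k, j}$ and $\lambda^{(\varepsilon)}_{n, k, j, s, i, z}$ of Theorem~\ref{14-Thm_Main_Gen_Loc}. After applying Lemma~\ref{14-Lem_Int_Form_2} one is left with an inner sum over the expansion index of $(Q(u^{\perp}) - Q(U))^{q}$; this sum is a terminating ${}_{2}F_{1}$ evaluated at $1$, so the Chu--Vandermonde identity applies, removes that index, and leaves the Gamma factors $\Gamma(\tfrac{k - 1}{2} + p)$ and $\Gamma(\tfrac{n + 1}{2} + p + q)$ together with the polynomial factors $\vartheta^{(\varepsilon)}$. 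A subsequent application of Legendre's duplication formula reconciles the $\omega$-constants coming from Lemma~\ref{14-Lem_Transf_Form} and the moment formula with the factor $\Gamma(\tfrac{n - k + 1}{2}) / (2\pi)$ in $\gamma_{n, k, j}$, while the binomial $\binom{n - k + j - 1}{j}$ emerges from the factorials in Lemma~\ref{14-Lem_Int_Form_2} via $\tfrac{(n - k + j)!}{(n - k - 1)!\, j!} = (n - k + j)\binom{n - k + j - 1}{j}$, the extra factor $n - k + j$ being absorbed into $\vartheta^{(0)}$. These are precisely the manipulations of \cite{14-HugSchnSchu08}; the only change is the adaptation to the normalization of the tensorial curvature measures adopted here.
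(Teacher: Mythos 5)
Your proposal follows essentially the same route as the paper's proof: the recursive integration via Lemma~\ref{14-Lem_Transf_Form}, the simplification of $\pi_L(u)$, $\norm{p_L(u)}$, $Q(L)$ and $[F,L]$ on the spanned subspace, the spherical moment and Beta integrals, Lemma~\ref{14-Lem_Int_Form_2} applied in $u^\perp$, and the concluding Chu--Vandermonde summation (which the paper packages as Lemma~\ref{14-Lem_Zeil_1}) all coincide with the paper's argument. The plan is correct as stated.
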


    \begin{proof}
      Lemma~\ref{14-Lem_Transf_Form} yields
      \begin{align*}
        \MoveEqLeft[3] \int _{G(n, k)} Q(L)^i \pi_L(u)^s
        \norm{p_L(u)}^{j - k} [ F, L ]^2 \, \nu_k(\intd L)
        \\
        = {} & \frac {\omega_{k}} {2 \omega_{n}} \int _{G(u^\perp, k -
          1)} \int _{-1}^{1} \int _{U^{\perp} \cap u^{\perp} \cap \Sn}
        \abs{t}^{k - 1} ( 1 - t^2 )^{\frac {n - k - 2} 2} \pi_{\Span
          \{ U, t u + \sqrt{1 - t^2} w \}}(u)^s
        \\
        & \times Q \bigl( \Span \bigl\{ U, t u + \sqrt{1 - t^2} w
        \bigr\} \bigr)^i \norm{p_{\Span \{ U, t u + \sqrt{1 - t^2} w
            \}}(u)}^{j - k}
        \\
        & \times \bigl[ F, \Span \bigl\{ U, t u + \sqrt{1 - t^2} w
        \bigr\} \bigr]^2 \, \cH^{n - k - 1} (\intd w) \, \intd t \,
        \nu^{u^\perp}_{k - 1} (\intd U).
      \end{align*}
      As
      \begin{align*}
        Q \bigl( \Span \bigl\{ U, t u + \sqrt{1 - t^2} w \bigr\}
        \bigr) & = Q ( U ) + \bigl( \abs{t} u + \sqrt{1 - t^2} \sign
        (t) w \bigr)^2,
        \\
        \pi_{\Span \{ U, t u + \sqrt{1 - t^2} w \}}(u) & = \abs{t} u +
        \sqrt{1 - t^2} \sign(t) w,
        \\
        \norm{p_{\Span \{ U, t u + \sqrt{1 - t^2} w \}}(u)} & =
        \abs{t},
        \\
        \bigl[ F, \Span \bigl\{ U, t u + \sqrt{1 - t^2} w \bigr\}
        \bigr] & = [ F, U ]^{(u^\perp)} \abs{t}
      \end{align*}
      hold for all $t \in [-1, 1] \setminus \{ 0 \}$, we obtain
      \begin{align*}
        \MoveEqLeft[3] \int _{G(n, k)} Q(L)^i \pi_L(u)^s
        \norm{p_L(u)}^{j - k} [ F, L ]^2 \, \nu_k(\intd L)
        \\
        = {} & \frac {\omega_{k}} {2 \omega_{n}} \int _{G(u^\perp, k -
          1)} \int _{-1}^{1} \int _{U^{\perp} \cap u^{\perp} \cap \Sn}
        \abs{t}^{j + 1} ( 1 - t^2 )^{\frac {n - k - 2} 2} \bigl( [ F,
        U ]^{(u^\perp)} \bigr)^2 \bigl( \abs{t} u + \sqrt{1 - t^2} w \bigr)^s
        \\
        & \times \bigl( Q
        ( U ) + ( \abs{t} u + \sqrt{1 - t^2} w )^2 \bigr)^i  \, \cH^{n - k - 1} (\intd w) \, \intd t \,
        \nu^{u^\perp}_{k - 1} (\intd U),
      \end{align*}
      where we used the fact that the integration with respect to $w$
      is invariant under reflections in the origin.  Then we apply the
      binomial theorem to the terms $( Q ( U ) + ( |t| u + \sqrt{1 -
        t^2} w )^2 )^i$ and $( |t| u + \sqrt{1 - t^2} w )^{s + 2p}$
      and get
      \begin{align*}
        \MoveEqLeft[3] \int _{G(n, k)} Q(L)^i \pi_L(u)^s \norm{p_L(u)
        }^{j - k} [ F, L ]^2 \, \nu_k(\intd L)
        \\
        = {} & \frac {\omega_{k}} {2 \omega_{n}} \sum_{p = 0}^{i}
        \sum_{q = 0}^{s + 2p} \binom{i}{p} \binom{s + 2p}{q} \int
        _{G(u^\perp, k - 1)} \int _{-1}^{1} |t|^{j + s + 2p - q + 1} (
        1 - t^2 )^{\frac {n - k + q - 2} 2} \, \intd t
        \\
        & \times \int _{U^{\perp} \cap u^{\perp} \cap \Sn} w^{q} \,
        \cH^{n - k - 1} (\intd w) \bigl( [ F, U ]^{(u^\perp)} \bigr)^2
        u^{s + 2p - q} Q ( U )^{i - p} \, \nu^{u^\perp}_{k - 1} (\intd
        U).
      \end{align*}
      Since
      \begin{equation*}
        \int _{U^{\perp} \cap u^{\perp} \cap \Sn} w^{q} \, \cH^{n - k
          - 1} (\intd w) = \1 \{ q \text{ even}\} 2 \frac{\omega_{n -
            k + q}}{\omega_{q + 1}} Q(U^\perp \cap u^\perp)^{\frac q
          2},
      \end{equation*}
      we deduce from the definition of the Beta function and its
      relation to the Gamma function that
      \begin{align*}
        \MoveEqLeft[3] \int _{G(n, k)} Q(L)^i \pi_L(u)^s \norm{p_L(u)
        }^{j - k} [ F, L ]^2 \, \nu_k(\intd L)
        \\
        = {} & \frac {\omega_{k}} {\omega_{n}} \sum_{p = 0}^{i}
        \sum_{q = 0}^{\lfloor \frac s 2 \rfloor + p} \binom{i}{p}
        \binom{s + 2p}{2q} \frac {\Gamma(\frac {j + s} 2 + p - q + 1)
          \Gamma(\frac {n - k} 2 + q)} {\Gamma(\frac {n - k + j + s} 2
          + p + 1)} \frac {\omega_{n - k + 2q}} {\omega_{2q + 1}}
        \\
        & \times u^{s + 2p - 2q} \int _{G(u^\perp, k - 1)} Q (
        U^{\perp} \cap u^{\perp} )^q \bigl( [ F, U ]^{(u^\perp)}
        \bigr)^2 Q ( U )^{i - p} \, \nu^{u^\perp}_{k - 1} (\intd U).
      \end{align*}
      Applying the binomial theorem to $Q ( U^{\perp} \cap u^{\perp}
      )^q = (Q(u^{\perp}) - Q (U))^q$ yields
      \begin{align}
        \MoveEqLeft[3] \int _{G(n, k)} Q(L)^i \pi_L(u)^s
        \norm{p_L(u)}^{j - k} [ F, L ]^2 \, \nu_k(\intd L) \nonumber
        \\
        = {} & \frac {\Gamma(\frac {n} 2)} {\sqrt \pi \Gamma(\frac {k}
          2)} \sum_{p = 0}^{i} \sum_{q = 0}^{\lfloor \frac s 2 \rfloor
          + p} \sum_{y = 0}^{q} (-1)^{y} \binom{i}{p} \binom{s +
          2p}{2q} \binom{q}{y} \Gamma(q + \tfrac {1} 2) \frac {\Gamma(\frac {j + s} 2 + p - q + 1)}
        {\Gamma(\frac {n - k + j + s} 2 + p + 1)} \nonumber
        \\
        & \times u^{s + 2p - 2q} Q
        \bigl( u^{\perp} \bigr)^{q - y}  \int _{G(u^\perp, k - 1)} \bigl( [ F, U ]^{(u^\perp)}
        \bigr)^2 Q ( U )^{i - p + y} \, \nu^{u^\perp}_{k - 1} (\intd
        U). \label{14-Form_Prop_Int_Form_1}
      \end{align}
      We conclude from Lemma~\ref{14-Lem_Int_Form_2}, which is applied
      in $u^\perp$ to the remaining integral on the right-hand side of
      \eqref{14-Form_Prop_Int_Form_1},
      \begin{align*}
        \MoveEqLeft[3] \int _{G(n, k)} Q(L)^i \pi_L(u)^s
        \norm{p_L(u)}^{j - k} [ F, L ]^2 \, \nu_k(\intd L)
        \\
        = {} & \frac {(n - k + j)! (k - 1)!} {\sqrt \pi (n - 1)!  j!}
        \frac {\Gamma(\frac {n} 2) \Gamma( \frac {n + 1} 2)}
        {\Gamma(\frac {k} 2) \Gamma( \frac {k + 1} 2)} \sum_{p =
          0}^{i} \sum_{q = 0}^{\lfloor \frac s 2 \rfloor + p}
        \binom{i}{p} \binom{s + 2p}{2q} \Gamma(q + \tfrac {1} 2)
        \\
        & \times \frac {\Gamma(\frac {j + s} 2 + p - q + 1)}
        {\Gamma(\frac {n - k + j + s} 2 + p + 1)} u^{s + 2p - 2q}
        \sum_{y = 0}^{q} (-1)^{y} \binom{q}{y} \frac {\Gamma( \frac {k
            - 1} 2 + i - p + y)} {\Gamma( \frac {n + 1} 2 + i - p +
          y)}
        \\
        & \times \!
        \begin{aligned}[t]
          \Bigl( &\big( \tfrac {k - 1} 2 + i - p + y \big) Q \bigl(
          u^\perp \bigr)^{i - p + q} + \tfrac {k - n} {n - k + j} (i - p + y) Q \bigl( u^\perp
          \bigr)^{i - p + q - 1} Q(F) \Bigr).
        \end{aligned}
      \end{align*}
      Lemma~\ref{14-Lem_Zeil_1} from Section~\ref{14-sec5} applied
      twice to the summations with respect to $y$ and Legendre's
      duplication formula applied three times to the Gamma functions
      involving $n$, $k$ and $n - k$ yield together with the
      definitions of $\gamma_{n, k, j}$ and $\vartheta_{n, k, j, p,
        q}^{\smash{(\eps)}}$, $\eps \in \{0, 1\}$,
      \begin{align*}
        \MoveEqLeft[3] \int _{G(n, k)} Q(L)^i \pi_L(u)^s
        \norm{p_L(u)}^{j - k} [ F, L ]^2 \, \nu_k(\intd L)
        \\
        = {} & \gamma_{n, k, j} \sum_{p = 0}^{i} \sum_{q = 0}^{\lfloor
          \frac s 2 \rfloor + i - p} \binom{i}{p} \binom{s + 2i -
          2p}{2q} \Gamma(q + \tfrac {1} 2)
        \\
        & \times \frac {\Gamma(\frac {j + s} 2 + i - p - q + 1)}
        {\Gamma(\frac {n - k + j + s} 2 + i - p + 1)} \frac
        {\Gamma(\frac {k - 1} 2 + p) \Gamma(\frac {n - k} 2 + q)}
        {\Gamma(\frac {n + 1} 2 + p + q)}
        \\
        & \times u^{s + 2i - 2p - 2q} \Bigl( \vartheta_{n, k, j, p,
          q}^{(0)} Q \bigl( u^\perp \bigr)^{p + q} - \vartheta_{n, k,
          j, p, q}^{(1)} Q \bigl( u^\perp \bigr)^{p + q - 1} Q(F)
        \Bigr),
      \end{align*}
      where we changed the order of summation with respect to $p$.
      From the binomial theorem applied to $Q(u^\perp)^{p + q} = (Q -
      u^2)^{p + q}$ we obtain
      \begin{align*}
        \MoveEqLeft[2] \int _{G(n, k)} Q(L)^i \pi_L(u)^s
        \norm{p_L(u)}^{j - k} [ F, L ]^2 \, \nu_k(\intd L)
        \\
        = {} & \gamma_{n, k, j} \sum_{p = 0}^{i} \sum_{q = 0}^{\lfloor
          \frac s 2 \rfloor + i - p} \binom{i}{p} \binom{s + 2i -
          2p}{2q} \Gamma(q + \tfrac {1} 2) \frac {\Gamma(\frac {j + s}
          2 + i - p - q + 1)} {\Gamma(\frac {n - k + j + s} 2 + i - p
          + 1)}
        \\
        & \times \frac {\Gamma(\frac {k - 1} 2 + p) \Gamma(\frac {n -
            k} 2 + q)} {\Gamma(\frac {n + 1} 2 + p + q)} \biggl(
        \sum_{z = 0}^{p + q} (-1)^{p + q - z} \binom{p + q}{z}
        \vartheta_{n, k, j, p, q}^{(0)} Q^{z} u^{s + 2i - 2z}
        \\
        & + \sum_{z = 0}^{p + q - 1} (-1)^{p + q - z } \binom{p + q -
          1}{z} \vartheta_{n, k, j, p, q}^{(1)} Q^z u^{s + 2i - 2z -
          2} Q(F) \biggr).
      \end{align*}
      A change of the order of summation, such that we sum with
      respect to $z$ first, gives
      \begin{align*}
        \MoveEqLeft \int _{G(n, k)} Q(L)^i \pi_L(u)^s \norm{p_L(u)}^{j
          - k} [ F, L ]^2 \, \nu_k(\intd L)
        \\
        & = \gamma_{n, k , j} \sum_{z = 0}^{\lfloor \frac s 2 \rfloor
          + i} \bigl( \lambda_{n, k, j, s, i, z}^{(0)} u^2 +
        \lambda_{n, k, j, s, i, z}^{(1)} Q(F) \bigr) Q^{z} u^{s + 2i -
          2z - 2},
      \end{align*}
      which concludes the proof.
    \end{proof}

    Next we state the special case of
    Proposition~\ref{14-Prop_Int_Form} where $k = 1$.

    \begin{proposition} \label{14-Prop_Int_Form_k=1} Let $i, s \in
      \N_0$, $F \in G(n, n - 1)$ and $u \in F^{\perp} \cap \Sn$. Then
      \begin{align*}
        \MoveEqLeft \int _{G(n, 1)} Q(L)^i \pi_L(u)^s
        \norm{p_L(u)}^{-1} [ F, L ]^2 \, \nu_1(\intd L)
        \\
        & = \frac {\Gamma(\frac {n} 2) \Gamma(\frac {s + 1} 2 + i)}
        {\pi \Gamma(\frac {n + s + 1} 2 + i)} \sum_{z = 0}^{\frac s 2
          + i} (-1)^{z} \binom{\frac s 2 + i}{z} \frac{1}{1 - 2z}
        u^{2z} Q^{\frac s 2 + i - z}
      \end{align*}
      for even $s$. If $s$ is odd, then
      \begin{equation*}
        \int _{G(n, 1)} Q(L)^i \pi_L(u)^s \norm{p_L(u)}^{-1} [ F, L
        ]^2 \, \nu_k(\intd L) = \frac {\Gamma(\frac {n} 2)
          \Gamma(\frac s 2 + i + 1)} {\sqrt \pi \Gamma(\frac {n + s +
            1} 2 + i)}  u Q^{\frac {s - 1} 2 + i}.
      \end{equation*}
    \end{proposition}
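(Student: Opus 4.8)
The plan is to specialize the argument behind Proposition~\ref{14-Prop_Int_Form} to the case $k=1$, where several simplifications occur. Since $F\in G(n,n-1)$ and $u\in F^{\perp}\cap\Sn$, the line $F^{\perp}$ is spanned by $u$, so $F=u^{\perp}$. Writing a line $L\in G(n,1)$ as $L=\Span\{v\}$ with $v\in\Sn$, we have $Q(L)=v^{2}$, $p_{L}(u)=(u\cdot v)\,v$, $\pi_{L}(u)=\sign(u\cdot v)\,v$ and $[u^{\perp},L]^{2}=(u\cdot v)^{2}$ for $\nu_{1}$-almost every $L$, hence
\[
  Q(L)^{i}\,\pi_{L}(u)^{s}\,\norm{p_{L}(u)}^{-1}[F,L]^{2}
  = \sign(u\cdot v)^{s}\,\abs{u\cdot v}\;v^{2i+s}
  = \begin{cases}\abs{u\cdot v}\,v^{2i+s}, & s \text{ even},\\ (u\cdot v)\,v^{2i+s}, & s \text{ odd}.\end{cases}
\]
In either case this tensor is invariant under $v\mapsto -v$, so it descends to a function on $G(n,1)$, and integration against $\nu_{1}$ equals $\omega_{n}^{-1}$ times integration against $\cH^{n-1}$ over $\Sn$, via the $2$-to-$1$ covering $\Sn\to G(n,1)$.

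For odd $s$ this gives a short argument. The tensor $(u\cdot v)\,v^{2i+s}$ is the contraction of $v^{2i+s+1}$ with $u$ in one slot, and $2i+s+1$ is even, so
\[
  \int_{G(n,1)}Q(L)^{i}\,\pi_{L}(u)^{s}\,\norm{p_{L}(u)}^{-1}[F,L]^{2}\,\nu_{1}(\intd L)
  = \frac{1}{\omega_{n}}\Bigl\langle\int_{\Sn}v^{2i+s+1}\,\cH^{n-1}(\intd v),\;u\Bigr\rangle_{1},
\]
where $\langle\nularg,u\rangle_{1}$ denotes the contraction of one tensor slot with $u$. By rotation invariance, $\int_{\Sn}v^{2p}\,\cH^{n-1}(\intd v)=\omega_{n}\,\frac{\Gamma(n/2)\,\Gamma(p+\frac12)}{\sqrt{\pi}\,\Gamma(p+\frac n2)}\,Q^{p}$ (the constant is found by evaluating both sides at a unit vector and computing a Beta integral), and the contraction of $Q^{p}$ with $u$ in one slot equals $u\,Q^{p-1}$: the resulting tensor is linear in $u$ and invariant under all rotations fixing $u$, hence a multiple of $u\,Q^{p-1}$, and the multiple is $1$ by evaluation at $u$. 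Taking $p=i+\frac{s+1}{2}$ yields exactly the asserted identity for odd $s$.

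For even $s$ the absolute value obstructs the moment argument, and I would instead follow the proof of Proposition~\ref{14-Prop_Int_Form}. One applies Lemma~\ref{14-Lem_Transf_Form} with $k=1$: then $G(u^{\perp},0)$ is a single point and $\nu_{0}^{u^{\perp}}$ a Dirac mass, so the outer integration is trivial and a line is written as $\Span\{tu+\sqrt{1-t^{2}}\,w\}$ with $t\in[-1,1]$, $w\in u^{\perp}\cap\Sn$. Using the reflection symmetry $w\mapsto -w$ to remove the sign of $t$, expanding $(\abs{t}\,u+\sqrt{1-t^{2}}\,w)^{2i+s}$ by the binomial theorem, integrating $w^{q}$ over $u^{\perp}\cap\Sn$ (this vanishes for odd $q$ and equals $2\,\frac{\omega_{n-1+q}}{\omega_{q+1}}\,Q(u^{\perp})^{q/2}$ for even $q$), and evaluating the $t$-integrals as Beta integrals, one obtains
\[
  \frac{\Gamma(\frac n2)}{\pi\,\Gamma(\frac{n+s+1}{2}+i)}\sum_{q=0}^{\frac s2+i}\binom{2i+s}{2q}\,\Gamma\!\bigl(q+\tfrac12\bigr)\,\Gamma\!\bigl(i+\tfrac s2-q+1\bigr)\,u^{2i+s-2q}\,Q(u^{\perp})^{q}.
\]
Substituting $Q(u^{\perp})=Q-u^{2}$, expanding, and collecting the coefficient of $Q^{\frac s2+i-z}\,u^{2z}$, it remains to verify that this coefficient equals $\Gamma(\frac{s+1}{2}+i)\,(-1)^{z}\binom{\frac s2+i}{z}\frac{1}{1-2z}$. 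This last identity is a terminating sum of Gamma functions and is the only nontrivial step; it is handled by the summation lemmas of Section~\ref{14-sec5}, just as Lemma~\ref{14-Lem_Zeil_1} is used in the proof of Proposition~\ref{14-Prop_Int_Form}. All remaining steps are the elementary spherical and Beta-integral computations already present in \cite{14-HugSchnSchu08}.
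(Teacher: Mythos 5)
Your treatment of the odd case is correct but genuinely different from the paper's. The paper runs the full machinery of Proposition~\ref{14-Prop_Int_Form} (Lemma~\ref{14-Lem_Transf_Form}, binomial expansions, Legendre duplication) for both parities and then, for odd $s$, observes that the inner alternating sum $\sum_q \frac{(-1)^{q-z}}{\Gamma(\frac{s+1}{2}+i-q)(q-z)!}$ collapses to $(1-1)^{\lfloor s/2\rfloor+i-z}=0$ except at the top index; see \eqref{14-Form_Prop_Int_Form_k=1}. You instead exploit that for $k=1$ the integrand is literally $\sign(u\cdot v)^{s}\abs{u\cdot v}\,v^{2i+s}$, which for odd $s$ is the polynomial $(u\cdot v)\,v^{2i+s}=\langle v^{2i+s+1},u\rangle_1$, and you finish with the standard even spherical moment $\int_{\Sn}v^{2p}\,\cH^{n-1}(\intd v)$ and the contraction identity $\langle Q^{p},u\rangle_1=uQ^{p-1}$. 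Your constants check out ($2\,\omega_{n+2p}/\omega_{2p+1}=\omega_n\Gamma(\frac n2)\Gamma(p+\frac12)/(\sqrt\pi\,\Gamma(p+\frac n2))$ with $p=i+\frac{s+1}{2}$ reproduces exactly the asserted coefficient), and the invariance-plus-diagonal-evaluation argument for the contraction is sound since among the $\mathrm{O}(u^\perp)$-invariant rank-$(2p-1)$ tensors $u^{2j+1}Q^{p-1-j}$ only $j=0$ is linear in $u$. This is arguably more transparent than the paper's derivation and correctly explains why the absolute value blocks the same shortcut for even $s$.

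For even $s$ you follow the paper's route and arrive at exactly the paper's intermediate expression before the substitution $Q(u^\perp)=Q-u^2$. The one step you leave unproved is the terminating identity
\begin{equation*}
  \sum_{q=\frac s2+i-z}^{\frac s2+i}(-1)^{q-\frac s2-i+z}\binom{s+2i}{2q}\binom{q}{\tfrac s2+i-z}\Gamma\bigl(q+\tfrac12\bigr)\bigl(\tfrac s2+i-q\bigr)!
  =(-1)^{z}\binom{\tfrac s2+i}{z}\frac{\Gamma(\frac{s+1}{2}+i)}{1-2z}.
\end{equation*}
This is true, and it is where the paper applies Legendre's duplication formula and then Lemma~\ref{14-Lem_Zeil_2} (not Lemma~\ref{14-Lem_Zeil_1}, which handles a different sum); your pointer to ``the summation lemmas of Section~\ref{14-sec5}'' is directionally right but you should carry out this reduction explicitly to have a complete proof. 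No other gaps.
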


    \begin{proof}
      The proof basically works as the proof of
      Proposition~\ref{14-Prop_Int_Form}. But we do not need to apply
      Lemma~\ref{14-Lem_Int_Form_2} as \eqref{14-Form_Prop_Int_Form_1}
      simplifies to
      \begin{align*}
        \MoveEqLeft[3] \int _{G(n, 1)} Q(L)^i \pi_L(u)^s
        \norm{p_L(u)}^{-1} [ F, L ]^2 \, \nu_k(\intd L)
        \\
        = {} & \frac {\Gamma(\frac {n} 2)} {\pi} \sum_{p = 0}^{i}
        \sum_{q = 0}^{\lfloor \frac s 2 \rfloor + p} \sum_{y = 0}^{q}
        (-1)^{y} \binom{i}{p} \binom{s + 2p}{2q} \binom{q}{y} \Gamma(q
        + \tfrac {1} 2) \frac {\Gamma(\frac {s} 2 + p - q + 1)}
        {\Gamma(\frac {n + s + 1} 2 + p)}
        \\
        & \times u^{s + 2p - 2q} Q \bigl( u^{\perp} \bigr)^{q - y}
        \int _{G(u^\perp, 0)} \bigl( [ F, U ]^{(u^\perp)} \bigr)^2 Q (
        U )^{i - p + y} \, \nu^{u^\perp}_{k - 1} (\intd U).
      \end{align*}
      Since the remaining integral on the right-hand side equals 1, if
      $p = i$ and $y = 0$, and in all the other cases it equals 0, we
      obtain
      \begin{align*}
        \MoveEqLeft \int _{G(n, k)} Q(L)^i \pi_L(u)^s \norm{p_L(u)}^{j
          - k} [ F, L ]^2 \, \nu_k(\intd L)
        \\
        & = \frac {\Gamma(\frac {n} 2)} {\pi} \sum_{q = 0}^{\lfloor
          \frac s 2 \rfloor + i} \binom{s + 2i}{2q} \Gamma(q + \tfrac
        {1} 2) \frac {\Gamma(\frac {s} 2 + i - q + 1)} {\Gamma(\frac
          {n + s + 1} 2 + i)} u^{s + 2i - 2q} Q \bigl( u^{\perp}
        \bigr)^{q}.
      \end{align*}
      Applying the binomial theorem to $Q ( u^{\perp} )^{q} = (Q -
      u^2)^{q}$ yields
      \begin{align*}
        \MoveEqLeft[1] \int _{G(n, k)} Q(L)^i \pi_L(u)^s
        \norm{p_L(u)}^{j - k} [ F, L ]^2 \, \nu_k(\intd L)
        \\
        & = \frac {\Gamma(\frac {n} 2)} {\pi} \sum_{q = 0}^{\lfloor
          \frac s 2 \rfloor + i} \sum_{z = 0}^{q} (-1)^{q - z}
        \binom{s + 2i}{2q} \binom{q}{z} \Gamma(q + \tfrac {1} 2) \frac
        {\Gamma(\frac {s} 2 + i - q + 1)} {\Gamma(\frac {n + s + 1} 2
          + i)} u^{s + 2i - 2z} Q^{z}.
      \end{align*}
      A change of the order of summation and Legendre's duplication
      formula applied to the Gamma functions involving $q$ give
      \begin{align*}
        \MoveEqLeft \int _{G(n, k)} Q(L)^i \pi_L(u)^s \norm{p_L(u)}^{j
          - k} [ F, L ]^2 \, \nu_k(\intd L)
        \\
        & = \frac {(s + 2i)! \Gamma(\frac {n} 2)} {2^{s + 2i}
          \Gamma(\frac {n + s + 1} 2 + i)} \sum_{z = 0}^{\lfloor \frac
          s 2 \rfloor + i} \frac{1}{z!} \sum_{q = z}^{\lfloor \frac s
          2 \rfloor + i} \frac{(-1)^{q - z} }{\Gamma(\frac {s + 1} 2 +
          i - q)(q - z)!} u^{s + 2i - 2z} Q^{z}.
      \end{align*}
      If $s$ is even, we conclude from Lemma~\ref{14-Lem_Zeil_2}
      applied to the summation with respect to~ $q$ and from another
      application of Legendre's duplication formula that
      \begin{align*}
        \MoveEqLeft \int _{G(n, k)} Q(L)^i \pi_L(u)^s \norm{p_L(u)}^{j
          - k} [ F, L ]^2 \, \nu_k(\intd L)
        \\
        & = \frac {\Gamma(\frac {n} 2) \Gamma(\frac {s + 1} 2 + i)}
        {\pi \Gamma(\frac {n + s + 1} 2 + i)} \sum_{z = 0}^{\frac s 2
          + i} (-1)^{\frac s 2 + i - z + 1} \binom{\frac s 2 + i}{z}
        \frac{1}{s + 2i - 2z - 1} u^{s + 2i - 2z} Q^{z}.
      \end{align*}
      A change of the order of summation with respect to $z$ then
      yields the assertion.

      On the other hand, if $s$ is odd, the binomial theorem gives,
      for $\lfloor \frac s 2 \rfloor + i \neq z$,
      \begin{align}
        \sum_{q = z}^{\lfloor \frac s 2 \rfloor + i} \frac{(-1)^{q -
            z} }{\Gamma(\frac {s + 1} 2 + i - q)(q - z)!} & = \frac 1
        {(\lfloor \frac s 2 \rfloor + i - z)!} \sum_{q = 0}^{\lfloor
          \frac s 2 \rfloor + i - z} (-1)^{q} \binom{\lfloor \frac s 2
          \rfloor + i - z}{q} \nonumber
        \\
        & = \frac 1 {(\lfloor \frac s 2 \rfloor + i - z)!} (1 -
        1)^{\lfloor \frac s 2 \rfloor + i - z} \nonumber
        \\
        & = 0. \label{14-Form_Prop_Int_Form_k=1}
      \end{align}
      For $\lfloor \frac s 2 \rfloor + i = z$, the sum on the
      left-hand side of \eqref{14-Form_Prop_Int_Form_k=1} equals 1.
      Hence, we finally obtain
      \begin{equation*}
        \int _{G(n, k)} Q(L)^i \pi_L(u)^s \norm{p_L(u)}^{j - k} [ F, L
        ]^2 \, \nu_k(\intd L) = \frac {\Gamma(\frac {n} 2)
          \Gamma(\frac s 2 + i + 1)} {\sqrt \pi \Gamma(\frac {n + s +
            1} 2 + i)}  u Q^{\lfloor \frac s 2 \rfloor + i},
      \end{equation*}
      if $s$ is odd.
    \end{proof}

    \subsection{The Proofs for the Intrinsic Case}\label{14-sec4.2}

    Now all tools are available which are needed to prove the main
    theorems.
		
    We start with the proof of
    Theorem~\ref{14-Thm_Main_j=k}.

    \begin{proof}[Proof (Theorem~\ref{14-Thm_Main_j=k})]
      Let $L \in G(n, k)$ and $t \in L^\perp$. Then we have
      \begin{equation*}
        \IntTenCM{k}{r}{s}{0}{L_t}(K \cap L_t, \beta \cap L_t) = \1 \{
        s = 0 \} \int _{K \cap \beta \cap L_t} x^r \, \cH^{k}(\intd x)
      \end{equation*}
      and thus, for $s \neq 0$,
      \begin{align*}
        \MoveEqLeft \int_{A(n, k)} Q(E)^i \IntTenCM{k}{r}{s}{0}{E} (K
        \cap E, \beta \cap E) \, \mu_k(\intd E)
        \\
        & = \int_{G(n, k)} \int _{L^\perp} Q(L_t)^i
        \IntTenCM{k}{r}{s}{0}{L_t} (K \cap L_t, \beta \cap L_t) \,
        \cH^{n - k} (\intd t) \, \nu_k(\intd L) = 0.
      \end{align*}
      Furthermore, for $s = 0$ Fubini's theorem yields
      \begin{align*}
        \MoveEqLeft \int _{A(n, k)} Q(E)^i \IntTenCM{k}{r}{0}{0}{E} (K
        \cap E, \beta \cap E) \, \mu_k(\intd E)
        \\
        & = \int_{G(n, k)} Q(L)^i \int _{L^\perp} \int _{K \cap \beta
          \cap L_t} x^r \, \cH^{k}(\intd x) \, \cH^{n - k} (\intd t)
        \, \nu_k(\intd L)
        \\
        & = \int_{G(n, k)} Q(L)^i \, \nu_k(\intd L) \int _{K \cap
          \beta} x^r \, \cH^{n}(\intd x).
      \end{align*}
      Then we conclude the proof with Lemma~\ref{14-Lem_Int_Form_1}
      and the definition of $\TenCM{n}{r}{0}{0}$.
    \end{proof}
		
    % \medskip

    We turn to the proof of Theorem~\ref{14-Thm_Main_Gen_Loc}.

    \begin{proof}[Proof (Theorem~\ref{14-Thm_Main_Gen_Loc})]
      First, we prove the formula for a polytope $P \in \cP^n$. The
      general result then follows by an approximation argument.

      As a matter of convenience, we name the integral of interest
      $I$. Then Lemma~\ref{14-Croft_Form_Rataj} yields
      \begin{align*}
        I & = \omega_{k - j} \int_{G(n, k)} Q(L)^i \int _{L^\perp}
        \int _{L_t \times (L \cap \Sn)} \1_{\beta} (x) x^r u^s  \Lambda_j^{(L_t)}(P \cap L_t, \intd
        (x, u)) \, \cH^{n - k} (\intd t) \, \nu_k(\intd L)
        \\
        & = \sum_{F \in \cF_{n - k + j}(P)} \int_{F \cap \beta} x^r
        \cH^{n - k + j} (\intd x) \int_{G(n, k)} Q(L)^i
        \\
        & \qquad\qquad\qquad \times \int_{N(P,F) \cap \Sn} \pi_L(u)^s
        \| p_L(u) \|^{j - k} [ F, L ]^2 \, \cH^{k - j - 1}(\intd u) \,
        \nu_k(\intd L).
      \end{align*}
      With Fubini's theorem we conclude
      \begin{align}
        I = {} & \sum_{F \in \cF_{n - k + j}(P)} \int_{F \cap \beta}
        x^r \cH^{n - k + j} (\intd x) \int_{N(P,F) \cap \Sn} \nonumber
        \\
        & \times \int_{G(n, k)} Q(L)^i \pi_L(u)^s \norm{p_L(u) }^{j -
          k} [ F, L ]^2 \, \nu_k(\intd L) \, \cH^{k - j - 1}(\intd
        u). \label{14-Form_Thm_Main_1}
      \end{align}
      Then we obtain from Proposition~\ref{14-Prop_Int_Form}
      \begin{align*}
        \MoveEqLeft[3] I = \gamma_{n, k, j} \sum_{F \in \cF_{n - k +
            j}(P)} \int_{F \cap \beta} x^r \cH^{n - k + j} (\intd x)
        \\
        & \times \!
        \begin{aligned}[t]
          \biggl(& \sum_{z = 0}^{\lfloor \frac s 2 \rfloor + i}
          \lambda_{n, k, j, s, i, z}^{(0)} Q^z \int_{N(P,F) \cap \Sn}
          u^{s + 2i - 2z} \, \cH^{k - j - 1}(\intd u)
          \\
          & + \sum_{z = 0}^{\lfloor \frac s 2 \rfloor + i - 1}
          \lambda_{n, k, j, s, i, z}^{(1)} Q^z Q(F) \int_{N(P,F) \cap
            \Sn} u^{s + 2i - 2z - 2} \, \cH^{k - j - 1}(\intd u)
          \biggr).
        \end{aligned}
      \end{align*}
      With the definition of the tensorial curvature measures we get
      \begin{align*}
        I = {} & \gamma_{n, k, j} \sum_{z = 0}^{\lfloor \frac s 2
          \rfloor + i} \lambda_{n, k, j, s, i, z}^{(0)} Q^z \TenCM{n -
          k + j}{r}{s + 2i - 2z}{0} (P, \beta) + \gamma_{n, k, j} \sum_{z = 0}^{\lfloor \frac s 2 \rfloor +
          i - 1} \lambda_{n, k, j, s, i, z}^{(1)} Q^z \TenCM{n - k +
          j}{r}{s + 2i - 2z - 2}{1} (P, \beta).
      \end{align*}
      Combining the two sums yields the assertion in the polytopal
      case.

      As pointed out before, there exists a weakly continuous extension
      of the generalized tensorial curvature measures $\smash{\TenCM{n
          - k + j}{r}{s + 2i - 2z - 2}{1}}$ from the set of all polytopes to $\cK^{n}$.
      The same is true for the tensorial curvature measures $\TenCM{n
        - k + j}{r}{s + 2i - 2z}{0}$. Hence, approximating a convex
      body $K \in \cK^{n}$ by polytopes yields the assertion in the
      general case.
    \end{proof}

    Now we prove Theorem~\ref{14-Thm_Main_k=1}, which deals with the
    case $k=1$ excluded in the statement of
    Theorem~\ref{14-Thm_Main_Gen_Loc}.

    \begin{proof}[Proof (Theorem~\ref{14-Thm_Main_k=1})]
      The proof basically works as the one of
      Theorem~\ref{14-Thm_Main_Gen_Loc}. Again, we prove the formula
      for a polytope $P \in \cP^n$. We call the integral of interest
      $I$ and proceed as in the previous proof in order to obtain
      \eqref{14-Form_Thm_Main_1}. Now we apply
      Proposition~\ref{14-Prop_Int_Form_k=1} and obtain
      \begin{align*}
        I = {} & \frac {\Gamma(\frac {n} 2) \Gamma(\frac {s + 1} 2 +
          i)} {\pi \Gamma(\frac {n + s + 1} 2 + i)} \sum_{z =
          0}^{\frac s 2 + i} (-1)^{z} \binom{\frac s 2 + i}{z}
        \frac{1}{1 - 2z}
        Q^{\frac s 2 + i - z} \\
        & \times \sum_{F \in \cF_{n - 1}(P)} \int_{F \cap \beta} x^r
        \cH^{n - k + j} (\intd x) \int_{N(P,F) \cap \Sn} u^{2z} \,
        \cH^{0}(\intd u),
      \end{align*}
      if $s$ is even. Hence, we conclude the assertion with the
      definition of $\TenCM{n - 1}{r}{2z}{0}$.

      If $s$ is odd, Proposition~\ref{14-Prop_Int_Form_k=1} yields
      \begin{equation*}
        I = \frac {\Gamma(\frac {n} 2) \Gamma(\frac {s} 2 + i + 1)} {\sqrt
				\pi \Gamma(\frac {n + s + 1} 2 + i)} Q^{\frac {s - 1} 2 + i}
				\TenCM{n - 1}{r}{1}{0} (P, \beta).
      \end{equation*}
      As sketched in the proof of Theorem~\ref{14-Thm_Main_Gen_Loc},
      the general result follows by an approximation argument.
    \end{proof}

    For the proof of Theorem~\ref{14-Thm_Main_Gen}, we first globalize
    Theorem~\ref{14-Thm_Main_Gen_Loc} and then apply
    Lemma~\ref{14-Lem_McMullen} to treat the appearing tensors
    $\smash{\TenCM{n - k + j}{0}{s + 2i - 2z - 2}{1}}$.

    \begin{proof}[Proof (Theorem~\ref{14-Thm_Main_Gen})]
      We only prove the formula for a polytope $P \in \cP^n$. As
      before, the general result follows by an approximation argument.

      We briefly write $I$ for the Crofton integral under
      investigation. Starting from the special case of
      Theorem~\ref{14-Thm_Main_Gen_Loc} where $r = 0$ and
      $\beta=\R^n$, we obtain
      \begin{align*}
        I = {} & \gamma_{n, k, j} \sum_{z = 0}^{\lfloor \frac s 2
          \rfloor + i} \lambda_{n, k, j, s, i, z}^{(0)} Q^z \MinTen{n
          - k + j}{0}{s + 2i - 2z} (P) + \gamma_{n, k, j} \sum_{z =
          0}^{\lfloor \frac s 2 \rfloor + i - 1} \lambda_{n, k, j, s,
          i, z}^{(1)} Q^z
        \\
        & \qquad \times \sum_{F \in \cF_{n - k + j}(P)} Q(F) \cH^{n -
          k + j} (F) \int _{N(P,F) \cap \Sn} u^{s + 2i - 2z - 2} \,
        \cH^{k - j - 1} (\intd u).
      \end{align*}
      With $Q(F) = Q - Q(N(P, F))$ and Lemma~\ref{14-Lem_McMullen} we
      get
      \begin{align*}
        \MoveEqLeft \sum_{F \in \cF_{n - k + j}(P)} Q(F) \cH^{n - k +
          j}(F) \int _{N(P,F) \cap \Sn} u^{s + 2i - 2z - 2} \, \cH^{k
          - j - 1} (\intd u)
        \\
        & = Q \MinTen{n - k + j}{0}{s + 2i - 2z - 2} (P) - \tfrac {k -
          j + s + 2i - 2z - 2} {s + 2i - 2z - 1} \MinTen{n - k +
          j}{0}{s + 2i - 2z} (P)
      \end{align*}
      and thus
      \begin{align*}
        I = {} & \gamma_{n, k, j} \sum_{z = 0}^{\lfloor \frac s 2
          \rfloor + i} \lambda_{n, k, j, s, i, z}^{(0)} Q^z \MinTen{n
          - k + j}{0}{s + 2i - 2z} (P)
        \\
        & + \gamma_{n, k, j} \sum_{z = 0}^{\lfloor \frac s 2 \rfloor +
          i - 1} \lambda_{n, k, j, s, i, z}^{(1)} Q^z \bigl( Q
        \MinTen{n - k + j}{0}{s + 2i - 2z - 2} (P) - \tfrac {k - j + s
          + 2i - 2z - 2} {s + 2i - 2z - 1} \MinTen{n - k + j}{0}{s +
          2i - 2z} (P) \bigr).
      \end{align*}
      Combining these sums yields
      \begin{equation*}
        I  = \gamma_{n, k, j} \sum_{z = 0}^{\lfloor \frac s 2 \rfloor
          + i} \bigl( \lambda_{n, k, j, s, i, z}^{(0)} + \lambda_{n,
          k, j, s, i, z - 1}^{(1)} - \tfrac {k - j + s + 2i - 2z - 2}
        {s + 2i - 2z - 1} \lambda_{n, k, j, s, i, z}^{(1)} \bigr) Q^z
        \MinTen{n - k + j}{0}{s + 2i - 2z} (P).
      \end{equation*}
      In fact, we have $\lambda_{n, k, j, s, i, -1}^{\smash{(1)}} = 0$
      and, furthermore for even $s$, as the sum with respect to $q$ is
      empty, $\lambda_{n, k, j, s, i, \lfloor \frac s 2 \rfloor +
        i}^{\smash{(1)}}$ also vanishes. On the other hand, for odd
      $s$, as $\MinTen{n - k + j}{0}{1} \equiv 0$, the last summand of
      the sum with respect to $z$ actually vanishes and thus its
      coefficient does not have to be determined and is defined as
      zero.

      Hence, we obtained a representation of the integral with the
      desired Min\-kowski tensors. It remains to determine the
      coefficients explicitly. First, we consider the case where ($k >
      1$ and) $z \in \{ 1, \ldots, \lfloor \frac s 2 \rfloor + i - 1
      \}$. We get
      \begin{align*}
        \MoveEqLeft[3] \lambda_{n, k, j, s, i, z}^{(0)} + \lambda_{n,
          k, j, s, i, z - 1}^{(1)}
        \\
        = {} & \sum_{p = 0}^{i} \sum_{q = (z - p)^+}^{\lfloor \frac s
          2 \rfloor + i - p} (-1)^{p + q - z} \binom{i}{p} \binom{s +
          2i - 2p}{2q} \binom{p + q}{z} \Gamma(q + \tfrac 1 2)
        \\
        & \times \frac {\Gamma(\frac {j + s} 2 + i - p - q + 1)
          \Gamma(\frac {k - 1} 2 + p) \Gamma(\frac {n - k} 2 + q)}
        {\Gamma(\frac {n - k + j + s} 2 + i - p + 1)\Gamma(\frac {n +
            1} 2 + p + q) }
        \\
        & \times \Bigl( (n- k + j)(\tfrac {k - 1} 2 + p) - \tfrac {z}
        {p + q} \bigl(p(n - k) - q(k - 1)\bigr) \Bigr)
      \end{align*}
      and
      \begin{align}
        \lambda_{n, k, j, s, i, z}^{(1)} = {} & \sum_{p = 0}^{i}
        \sum_{q = (z - p)^+}^{\lfloor \frac s 2 \rfloor + i - p}
        (-1)^{p + q - z} \binom{i}{p} \binom{s + 2i - 2p}{2q} \binom{p
          + q}{z} \Gamma(q + \tfrac 1 2) \nonumber
        \\
        & \times \frac {\Gamma(\frac {j + s} 2 + i - p - q + 1)
          \Gamma(\frac {k - 1} 2 + p) \Gamma(\frac {n - k} 2 + q)}
        {\Gamma(\frac {n - k + j + s} 2 + i - p + 1)\Gamma(\frac {n +
            1} 2 + p + q)} \nonumber
        \\
        & \times \tfrac {p + q - z} {p + q} \bigl(p(n - k) - q(k -
        1)\bigr). \label{14-Form_Proof_Thm_Main_Gen}
      \end{align}
      Hence we conclude
      \begin{align*}
        \MoveEqLeft[3] \lambda_{n, k, j, s, i, z}^{(0)} + \lambda_{n,
          k, j, s, i, z - 1}^{(1)} - \tfrac {k - j + s + 2i - 2z - 2}
        {s + 2i - 2z - 1} \lambda_{n, k, j, s, i, z}^{(1)}
        \\
        = {} & \sum_{p = 0}^{i} \sum_{q = (z - p)^+}^{\lfloor \frac s
          2 \rfloor + i - p} (-1)^{p + q - z} \binom{i}{p} \binom{s +
          2i - 2p}{2q} \binom{p + q}{z} \Gamma(q + \tfrac 1 2)
        \\
        & \times \frac {\Gamma(\frac {j + s} 2 + i - p - q + 1)
          \Gamma(\frac {k - 1} 2 + p) \Gamma(\frac {n - k} 2 + q)}
        {\Gamma(\frac {n - k + j + s} 2 + i - p + 1)\Gamma(\frac {n +
            1} 2 + p + q) }
        \\
        & \times \left( (n - k + j) (\tfrac {k - 1} 2 + p) - \tfrac
          {p(n - k) - q(k - 1)} {p + q} \left( p + q + \tfrac {(k - j
              - 1)(p + q - z)} {s + 2i - 2z - 1} \right) \right).
      \end{align*}
      The case $z = \lfloor \frac s 2 \rfloor + i$, for even $s$,
      follows similarly.  For $z = 0$, we have $\lambda_{n, k, j, s,
        i, -1}^{(1)} = 0$ and \eqref{14-Form_Proof_Thm_Main_Gen} still
      holds, if one cancels the remaining $\frac {p + q - z} {p + q} =
      1$.
    \end{proof}

    Finally, we provide the argument for Corollary~\ref{14-Cor_j=k-1},
    which is the special case of Theorem~\ref{14-Thm_Main_Gen_Loc}
    obtained for $i=0$ and $j+1=k\ge 2$.

    \begin{proof}[Proof (Corollary~\ref{14-Cor_j=k-1})] With the specific
      choices of the indices, we obtain
      \begin{align*}
        \lambda^{(\varepsilon)}_{n,k,k-1,s,0,z}= {} & \sum_{q = z +
          \varepsilon}^{\lfloor \frac s 2 \rfloor} (-1)^{q - z}
        \binom{s }{2q} \binom{ q - \varepsilon}{z} \Gamma(q + \tfrac 1
        2)
        \\
        & \times \frac {\Gamma(\frac {k + s + 1} 2 - q )}
        {\Gamma(\frac {n + s + 1} 2 )} \frac {\Gamma(\frac {k - 1} 2 )
          \Gamma(\frac {n - k} 2 + q)} {\Gamma(\frac {n + 1} 2 + q)}
        \vartheta_{n, k, k-1, 0, q}^{(\varepsilon)},
      \end{align*}
      with
      \begin{equation*}
        \vartheta_{n, k, k-1, 0, q}^{(0)} = \tfrac{1}{2}(n -1) (k-1), \qquad
        \vartheta_{n, k, k-1, 0, q}^{(1)} := - q(k - 1),
      \end{equation*}
      and
      \begin{equation*}
        \gamma_{n, k, k-1} = \binom{n - 2}{k-1} \frac{\Gamma(\frac {n - k + 1}
          2)}{2 \pi}.
      \end{equation*}
      Let us denote the Crofton integral by $I$. Then
      Theorem~\ref{14-Thm_Main_Gen_Loc} implies that
      \begin{align*}
        I= {} &\gamma_{n,k,k-1}\sum_{z=0}^{\lfloor \frac s 2
          \rfloor}Q^z\bigl(\lambda^{(0)}_{n,k,k-1,s,0,z}-\lambda^{(1)}_{n,k,k-1,s,0,z}\bigr)
        \TenCM{n -1}{r}{s - 2z }{0}(K,\beta)
        \\
        & + \gamma_{n,k,k-1}\sum_{z=1}^{\lfloor \frac s 2
          \rfloor+1}Q^z \lambda^{(1)}_{n,k,k-1,s,0,z-1} \TenCM{n
          -1}{r}{s - 2z }{0}(K,\beta)
        \\
        = {} &\gamma_{n,k,k-1}\sum_{z=0}^{\lfloor \frac s 2
          \rfloor}Q^z\bigl(\underbrace{\lambda^{(0)}_{n,k,k-1,s,0,z}+\lambda^{(1)}_{n,k,k-1,s,0,z-1}-
          \lambda^{(1)}_{n,k,k-1,s,0,z}}_{=:\lambda}\bigr) \TenCM{n
          -1}{r}{s - 2z }{0}(K,\beta),
      \end{align*}
      where
      \begin{align*}
        \lambda= {} & \frac {\Gamma(\frac {k - 1} 2 )}{\Gamma(\frac {n + s
            + 1} 2 )}\sum_{q=z}^{\lfloor \frac s 2 \rfloor}(-1)^{q -
          z} \binom{s }{2q} \Gamma(q + \tfrac 1 2) \frac {\Gamma(\frac
          {k + s + 1} 2 - q )\Gamma(\frac {n - k} 2 + q)}
        {\Gamma(\frac {n +1}2 + q)}
        \\
        & \times
        \!
        \begin{aligned}[t]
          \biggl[
          &\binom{q}{z}\frac{1}{2}(n-1)(k-1)-
          \binom{q-1}{z-1}(-1)q(k-1) -\binom{q-1}{z}(-1)q(k-1)\biggr]
        \end{aligned}
        \\
        = {} &\frac {\Gamma(\frac {k - 1} 2 )}{\Gamma(\frac {n + s + 1} 2
          )}\sum_{q=z}^{\lfloor \frac s 2 \rfloor}(-1)^{q - z}
        \binom{s }{2q} \Gamma(q + \tfrac 1 2) \frac {\Gamma(\frac {k +
            s + 1} 2 - q )\Gamma(\frac {n - k} 2 + q)} {\Gamma(\frac
          {n +1}2 + q)}\binom{q}{z}(k-1)\left(\tfrac{n-1}2 +q\right)
        \\
        = {} &2\frac {\Gamma(\frac {k +1} 2 )}{\Gamma(\frac {n + s + 1} 2
          )}\sum_{q=z}^{\lfloor \frac s 2 \rfloor}(-1)^{q - z}
        \binom{s }{2q}\binom{q}{z} \Gamma(q + \tfrac 1 2)\frac
        {\Gamma(\frac {k + s + 1} 2 - q )\Gamma(\frac {n - k} 2 + q)}
        {\Gamma(\frac {n -1}2 + q)},
      \end{align*}
      from which the assertion follows.
    \end{proof}
		
		\section{The Proofs for the Extrinsic Case}\label{14-sec5new}
		
		Our starting point is a relation, due to
  McMullen, which  relates the intrinsic and the extrinsic Minkowski tensors (see \cite[5.1 Theorem]{14-McMullen97}).
	Its proof can easily be localized (see \cite[Korollar 2.2.2]{14-Schuster03}). Combining this localization with the relation
	$Q=Q(E)+Q(E^\perp)$, where $E\subset \R^n$ is any $k$-flat, we obtain
	the following lemma.
	
  \begin{lemma} \label{14-Lem_McMullen_Int_Ext}
    Let $j, k, r, s \in \N_{0}$ with $j < k < n$, let $K \in \cK^{n}$ with $K \subset E \in A(n , k)$ and $\beta \in \cB(\R^{n})$.
		Then
    \begin{align*}
      \TenCM{j}{r}{s}{0} (K, \beta) & = \frac{\pi^{\frac {n - k} 2} s!}{\Gamma(\frac {n - j + s} 2)} \sum_{m = 0}^{\lfloor \frac s 2 \rfloor} \sum_{l = 0}^{m} (-1)^{m - l} \binom{m}{l} \frac {\Gamma(\frac {k - j + s} 2 - m)} {4^{m} \, m! (s - 2m)!}  Q^{l} Q(E)^{m - l} \IntTenCM{j}{r}{s - 2m}{0}{E} (K, \beta).
    \end{align*}
  \end{lemma}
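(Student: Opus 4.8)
The plan is to follow the route indicated just before the statement: first establish the one-parameter relation expressing the extrinsic $\TenCM{j}{r}{s}{0}(K,\cdot)$ through the intrinsic $\IntTenCM{j}{r}{s-2m}{0}{E}(K,\cdot)$ (the localization of McMullen's relation, see \cite[5.1~Theorem]{14-McMullen97} and \cite[Korollar~2.2.2]{14-Schuster03}), and then insert $Q=Q(E)+Q(E^\perp)$. Since the present normalization of the tensorial curvature measures differs slightly from the one in the cited sources, I would re-derive the one-parameter relation directly in order to pin down the constant. First reduce to a polytope $P\in\cP^n$ with $P\subset E$; the general case then follows from the weak continuity of the tensorial curvature measures by approximating $K$ by polytopes contained in $E$. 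For a polytope one uses the explicit face representations of $\TenCM{j}{r}{s}{0}$ and $\IntTenCM{j}{r}{s}{0}{E}$ from Section~\ref{14-sce2}, together with the orthogonal decomposition $N(P,F)=N_E(P,F)\oplus E^\perp$ of the normal cone at a face $F\in\cF_{j}(P)$. This decomposition holds because $P\subset E$ forces every vector of $E^\perp$ to be an outer normal vector of $P$ at each of its faces, while $N_E(P,F)=N(P,F)\cap L(E)$ has dimension $k-j$; consequently $N(P,F)\cap\Sn$ is the spherical join of $N_E(P,F)\cap\Sn$ and $E^\perp\cap\Sn$, of dimensions $p-1$ and $q-1$ with $p:=k-j$ and $q:=n-k$ (both positive since $j<k<n$).

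The core computation is the evaluation of $\int_{N(P,F)\cap\Sn}u^s\,\cH^{n-j-1}(\intd u)$ in join coordinates $u=\cos\varphi\,a+\sin\varphi\,b$, where $a\in N_E(P,F)\cap\Sn$, $b\in E^\perp\cap\Sn$, $\varphi\in[0,\pi/2]$, so that $\cH^{n-j-1}(\intd u)=\cos^{p-1}\varphi\,\sin^{q-1}\varphi\,\intd\varphi\,\cH^{p-1}(\intd a)\,\cH^{q-1}(\intd b)$. I would expand $u^s$ by the binomial theorem; integration over $b\in E^\perp\cap\Sn$ annihilates the odd powers and replaces $b^{2m}$ by $\tfrac{2\omega_{n-k+2m}}{\omega_{2m+1}}Q(E^\perp)^m$, where $Q(E^\perp)$ is the metric tensor of the linear subspace $E^\perp$; the integral over $\varphi$ is a Beta integral equal to $\tfrac{1}{2\Gamma((n-j+s)/2)}\Gamma(\tfrac{k-j+s}{2}-m)\Gamma(\tfrac{n-k}{2}+m)$; and the surviving integral $\int_{N_E(P,F)\cap\Sn}a^{s-2m}\,\cH^{k-j-1}(\intd a)$ is precisely the angular factor of $\IntTenCM{j}{r}{s-2m}{0}{E}$. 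Summing over $F\in\cF_{j}(P)$ (the tensor $Q(E^\perp)^m$ is independent of $F$) gives $\TenCM{j}{r}{s}{0}(P,\beta)=\sum_{m=0}^{\lfloor s/2\rfloor}c_{n,k,j,s,m}\,Q(E^\perp)^m\,\IntTenCM{j}{r}{s-2m}{0}{E}(P,\beta)$, and with $\omega_d=2\pi^{d/2}/\Gamma(d/2)$, $\Gamma(m+\tfrac12)=\sqrt\pi\,(2m)!/(4^mm!)$ and $\binom{s}{2m}(2m)!=s!/(s-2m)!$ the constant collapses to $c_{n,k,j,s,m}=\frac{\pi^{(n-k)/2}s!}{\Gamma((n-j+s)/2)}\cdot\frac{\Gamma((k-j+s)/2-m)}{4^mm!\,(s-2m)!}$.

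The last step is to substitute $Q(E^\perp)=Q-Q(E)$ (an identity of symmetric $2$-tensors, both sides evaluating to $p_{E^\perp}x\cdot p_{E^\perp}y$ via $x\cdot y=p_{L(E)}x\cdot p_{L(E)}y+p_{E^\perp}x\cdot p_{E^\perp}y$) and to expand $Q(E^\perp)^m=\sum_{l=0}^m(-1)^{m-l}\binom{m}{l}Q^lQ(E)^{m-l}$ by the binomial theorem, which turns the single sum into the double sum asserted in the lemma. I expect the main obstacle to be the constant bookkeeping in the core computation: correctly accounting for the normalizing factors $\omega_{n-j}$ versus $\omega_{k-j}$ built into the extrinsic and intrinsic tensorial curvature measures, the ratio of $\omega$'s produced by the spherical integration over $E^\perp$, and the Beta integral, and then compressing everything into the compact closed form above. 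A minor additional point is to treat the degenerate values $s\in\{0,1\}$ and the case $\dim K<k$ (where $N_E(P,F)$ still has dimension $k-j$, so the join coordinates remain valid), and to justify the polytopal approximation for the measures $\TenCM{j}{r}{s}{0}(\cdot,\beta)$ and $\IntTenCM{j}{r}{s}{0}{E}(\cdot,\beta)$.
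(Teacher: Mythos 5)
Your proposal is correct and follows exactly the route the paper indicates for this lemma: localize McMullen's relation between intrinsic and extrinsic tensor valuations (the paper simply cites \cite[5.1 Theorem]{14-McMullen97} and \cite[Korollar 2.2.2]{14-Schuster03} for this step, whereas you re-derive it via the join decomposition $N(P,F)=N_E(P,F)\oplus E^\perp$ and the resulting Beta integral), and then substitute $Q(E^\perp)=Q-Q(E)$ and expand binomially. Your constant bookkeeping checks out: $\binom{s}{2m}\Gamma(m+\tfrac12)\,\omega_{n-k+2m}/\omega_{2m+1}$ collapses to $\pi^{(n-k)/2}s!/(4^m m!\,(s-2m)!)$ times the remaining Gamma ratio, reproducing the asserted coefficient.
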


		We start with the proof of Theorem \ref{14-Thm_ExtrCF_j=k-1}, for which
		we  use Theorem \ref{14-Thm_Main_Gen_Loc} after an
		application of Lemma \ref{14-Lem_McMullen_Int_Ext}.
  \begin{proof}[Proof (Theorem \ref{14-Thm_ExtrCF_j=k-1})]
    Lemma \ref{14-Lem_McMullen_Int_Ext} for $j=k-1$ gives
  \begin{align*}
    & \int_{A(n, k)} \TenCM{k - 1}{r}{s}{0} (K \cap E, \beta \cap E) \, \mu_k(\intd E) \\
    & \qquad = \frac{\pi^{\frac {n - k} 2} s!}{\Gamma(\frac {n - k + s + 1} 2)} \sum_{m = 0}^{\lfloor \frac s 2 \rfloor} \sum_{l = 0}^{m} (-1)^{m - l} \frac {\Gamma(\frac {s + 1} 2 - m)} {4^{m} \, m! (s - 2m)!} \binom{m}{l} Q^{l} \\
    & \qquad \qquad \times \int_{A(n, k)} Q(E)^{m - l} \IntTenCM{k - 1}{r}{s - 2m}{0}{E} (K \cap E, \beta \cap E) \, \mu_k(\intd E).
  \end{align*}
  For $j=k-1$ we can argue as in the proof of Corollary \ref{14-Cor_j=k-1} to see that Theorem \ref{14-Thm_Main_Gen_Loc} implies that
  \begin{align}
      & \int_{A(n, k)} Q(E)^{i} \IntTenCM{k - 1}{r}{s}{0}{E} (K \cap E, \beta \cap E) \, \mu_k(\intd E) \nonumber \\
      & \qquad = \gamma_{n, k, k - 1} \sum_{z = 0}^{\lfloor \frac {s} {2} \rfloor + i} \lambda_{n, k, k - 1, s, i, z} \, Q^z \TenCM{n - 1}{r}{s + 2i - 2z}{0} (K \cap E, \beta \cap E), \label{Form_CF_Main_j=k-1}
  \end{align}
  where
  \begin{align*}
    \lambda_{n, k, k - 1, s, i, z} & = (k - 1) \sum_{p = 0}^{i} \sum_{q = (z - p)^+}^{\lfloor \frac s 2 \rfloor + i - p} (-1)^{p + q - z} \binom{i}{p} \binom{s + 2i - 2p}{2q} \binom{p + q}{z} \\
    & \qquad\qquad \times \Gamma(q + \tfrac 1 2) \frac {\Gamma(\frac {k + s + 1} 2 + i - p - q)} {\Gamma(\frac {n + s + 1} 2 + i - p)} \frac {\Gamma(\frac {k - 1} 2 + p) \Gamma(\frac {n - k} 2 + q)} {\Gamma(\frac {n - 1} 2 + p + q)}.
  \end{align*}
	(Of course, for $i=0$ we recover Corollary \ref{14-Cor_j=k-1}.)
  Hence, we obtain
  \begin{align*}
    & \int_{A(n, k)} \TenCM{k - 1}{r}{s}{0} (K \cap E, \beta \cap E) \, \mu_k(\intd E) \\
    & \qquad = \gamma_{n, k, k - 1} \frac{\pi^{\frac {n - k} 2} s!}{\Gamma(\frac {n - k + s + 1} 2)} \sum_{m = 0}^{\lfloor \frac s 2 \rfloor} \sum_{l = 0}^{m} \sum_{z = 0}^{\lfloor \frac s 2 \rfloor - l} (-1)^{m - l} \frac {\Gamma(\frac {s + 1} 2 - m)} {4^{m} \, m! (s - 2m)!} \binom{m}{l} \\
    & \qquad \qquad \times \lambda_{n, k, k - 1, s - 2m, m - l, z} \, Q^{l + z} \TenCM{n - 1}{r}{s - 2l - 2z}{0} (K, \beta) .
  \end{align*}
  An index shift of the summation with respect to $z$ yields
  \begin{align*}
    & \int_{A(n, k)} \TenCM{k - 1}{r}{s}{0} (K \cap E, \beta \cap E) \, \mu_k(\intd E) \\
    & \qquad = \gamma_{n, k, k - 1} \frac{\pi^{\frac {n - k} 2} s!}{\Gamma(\frac {n - k + s + 1} 2)} \sum_{m = 0}^{\lfloor \frac s 2 \rfloor} \sum_{l = 0}^{m} \sum_{z = l}^{\lfloor \frac s 2 \rfloor} (-1)^{m - l} \frac {\Gamma(\frac {s + 1} 2 - m)} {4^{m} \, m! (s - 2m)!} \binom{m}{l} \\
    & \qquad \qquad \times \lambda_{n, k, k - 1, s - 2m, m - l, z - l} \, Q^{z} \TenCM{n - 1}{r}{s - 2z}{0} (K, \beta).
  \end{align*}
  Changing the order of summation gives
  \begin{align}
    & \int_{A(n, k)} \TenCM{k - 1}{r}{s}{0} (K \cap E, \beta \cap E) \, \mu_k(\intd E) \nonumber \\
    & \qquad = \gamma_{n, k, k - 1} \frac{\pi^{\frac {n - k} 2} s!}{\Gamma(\frac {n - k + s + 1} 2)} \sum_{z = 0}^{\lfloor \frac s 2 \rfloor} \sum_{l = 0}^{z} \sum_{m = l}^{\lfloor \frac s 2 \rfloor} (-1)^{m - l} \frac {\Gamma(\frac {s + 1} 2 - m)} {4^{m} \, m! (s - 2m)!} \binom{m}{l} \nonumber \\
    & \qquad \qquad \times \lambda_{n, k, k - 1, s - 2m, m - l, z - l} \, Q^{z} \TenCM{n - 1}{r}{s - 2z}{0} (K, \beta). \label{Form_CF_j=k-1}
  \end{align}
  The coefficients of the tensorial curvature measures on the right-hand side of \eqref{Form_CF_j=k-1} do not depend on the choice of $r \in \N_{0}$ or $\beta \in \cB(\R^{n})$. Thus, we can set
  \begin{align*}
    & \int_{A(n, k)} \TenCM{k - 1}{r}{s}{0} (K \cap E, \beta \cap E) \, \mu_k(\intd E) = \sum_{z = 0}^{\lfloor \frac s 2 \rfloor}
		\kappa_{n, k, s, z} \, Q^{z} \TenCM{n - 1}{r}{s - 2z}{0} (K, \beta),
  \end{align*}
	where the coefficient $\kappa_{n, k, s, z}$ is uniquely defined in the obvious way.
  By choosing $r = 0$ and $\beta = \R^{n}$, we can compare this to the Crofton formula for translation invariant Minkowski tensors in
	\cite{14-BernHug15}.
	In fact, since the functionals  $Q^{z} \TenCM{n - 1}{0}{s - 2z}{0} (K, \R^n)$, $z\in \{0,\ldots,\lfloor s/2\rfloor\}
	\setminus\{({s-1})/{2}\}$, are linearly independent, we can conclude from the Crofton formula for the translation invariant
	Minkowski tensors in \cite[Theorem 3]{14-BernHug15}
	that
  \begin{align*}
    \kappa_{n, k, s, z} = \frac{k - 1} {n - 1} \frac {\pi^{\frac {n - k} 2} \Gamma(\frac {n} 2)} {\Gamma(\frac {k} 2) \Gamma(\frac {n - k} 2)} \frac{\Gamma(\frac {s + 1} 2) \Gamma(\frac s 2 + 1)}{\Gamma(\frac{n - k + s + 1}{2}) \Gamma(\frac {n + s - 1} 2)} \frac{\Gamma(\tfrac {n - k} 2 + z) \Gamma(\tfrac {k + s - 1} 2 - z)}{\Gamma(\frac s 2 - z + 1) z!}
  \end{align*}
  for $z \neq ({s - 1})/ 2$. If $z = ( {s - 1} )/2$, then $\TenCM{n - 1}{0}{s - 2z}{0} (K, \R^{n}) = \MinTen{n - 1}{0}{1}(K) = 0$, and hence we do not get any information about the corresponding coefficient from the global theorem. Consequently, we have to calculate $\kappa_{n, k, s, ( {s - 1} )/2}$ directly, which is what we do later in the proof.

  But first we demonstrate that  the coefficients of the tensorial curvature measures in \eqref{Form_CF_j=k-1} can
	be determined also by a direct calculation if $s$ is even. In fact, we obtain
  \begin{align*}
    S & := \sum_{m = l}^{\lfloor \frac s 2 \rfloor} (-1)^{m - l} \frac {\Gamma(\frac {s + 1} 2 - m)} {4^{m} \, m! (s - 2m)!} \binom{m}{l} \lambda_{n, k, k - 1, s - 2m, m - l, z - l} \\
    & \phantom{:}= (k - 1) \sum_{m = l}^{\lfloor \frac s 2 \rfloor} \sum_{p = l}^{m} \sum_{q = (z - p)^+}^{\lfloor \frac s 2 \rfloor - p} (-1)^{m + l + p + q - z} \frac {\Gamma(\frac {s + 1} 2 - m)} {4^{m} \, m! (s - 2m)!} \\
    & \qquad \times \binom{m}{l} \binom{m - l}{p - l} \binom{s - 2p}{2q} \binom{p + q - l}{z - l} \Gamma(q + \tfrac 1 2) \\
    & \qquad \times \frac {\Gamma(\frac {k + s + 1} 2 - p - q)} {\Gamma(\frac {n + s + 1} 2 - p)} \frac {\Gamma(\frac {k - 1} 2 + p - l) \Gamma(\frac {n - k} 2 + q)} {\Gamma(\frac {n - 1} 2 + p + q - l)}.
  \end{align*}
  Changing the order of summation gives
  \begin{align*}
    S & = (k - 1) \sum_{p = l}^{\lfloor \frac s 2 \rfloor} \sum_{q = (z - p)^+}^{\lfloor \frac s 2 \rfloor - p} (-1)^{l + q - z} \binom{s - 2p}{2q} \binom{p + q - l}{z - l} \Gamma(q + \tfrac 1 2) \\
    & \qquad \times \frac {\Gamma(\frac {k + s + 1} 2 - p - q)} {\Gamma(\frac {n + s + 1} 2 - p)} \frac {\Gamma(\frac {k - 1} 2 + p - l) \Gamma(\frac {n - k} 2 + q)} {\Gamma(\frac {n - 1} 2 + p + q - l)} \\
    & \qquad \times \sum_{m = p}^{\lfloor \frac s 2 \rfloor} (-1)^{m + p} \binom{m}{l} \binom{m - l}{p - l} \frac {\Gamma(\frac {s + 1} 2 - m)} {4^{m} \, m! (s - 2m)!}.
  \end{align*}
    We denote the sum with respect to $m$ by $T$ and conclude
  \begin{align*}
    T & = \sum_{m = p}^{\lfloor \frac s 2 \rfloor} (-1)^{m + p} \binom{m}{l} \binom{m - l}{p - l} \frac {\Gamma(\frac {s + 1} 2 - m)} {4^{m} \, m! (s - 2m)!} \\
    & = \frac {1}{l! (p - l)!} \sum_{m = p}^{\lfloor \frac s 2 \rfloor} (-1)^{m + p} \frac {\Gamma(\frac {s + 1} 2 - m)} {4^{m} \, (m - p)! (s - 2m)!}.
  \end{align*}
  An index shift yields
  \begin{align*}
    T & = \frac {1}{2^{s} l! (p - l)!} \sum_{m = 0}^{\lfloor \frac s 2 \rfloor - p} (-1)^{m} \frac {2^{s - 2p - 2m} \Gamma(\frac {s + 1} 2 - p - m)} {m! (s - 2p - 2m)!}.
  \end{align*}
  Legendre's duplication formula gives
  \begin{align*}
    T & = \frac {\sqrt \pi}{2^{s} l! (p - l)!} \sum_{m = 0}^{\lfloor \frac s 2 \rfloor - p} (-1)^{m} \frac {1} {m! \Gamma(\frac {s} 2 - p - m + 1)}.
  \end{align*}
  If $s$ is even, the binomial theorem yields
  \begin{align*}
    T & = \frac {\sqrt \pi}{2^{s} l! (p - l)! (\frac s 2 - p)!} \sum_{m = 0}^{\frac s 2 - p} (-1)^{m} \binom{\frac s 2 - p}{m} \\
    & = \frac {\sqrt \pi}{2^{s} l! (p - l)! (\frac s 2 - p)!} (1 - 1)^{\frac s 2 - p} \\
    & = \1\{ p = \frac s 2 \} \frac {\sqrt \pi}{2^{s} l! (\frac s 2 - l)!}.
  \end{align*}
  Hence, we obtain
  \begin{align*}
    S & = \frac {(k - 1) \sqrt \pi}{2^{s} l! (\frac s 2 - l)!} \sum_{q = (z - \frac s 2)^+}^{0} (-1)^{l + q - z} \binom{\frac s 2 + q - l}{z - l} \Gamma(q + \tfrac 1 2) \frac {\Gamma(\frac {k + 1} 2 - q)} {\Gamma(\frac {n + 1} 2)} \frac {\Gamma(\frac {k + s - 1} 2 - l) \Gamma(\frac {n - k} 2 + q)} {\Gamma(\frac {n + s - 1} 2 + q - l)} \\
    & = (-1)^{l - z} \frac {(k - 1) \sqrt \pi \Gamma(\tfrac 1 2)}{2^{s} l! (\frac s 2 - l)!} \binom{\frac s 2 - l}{z - l} \frac {\Gamma(\frac {k + 1} 2)} {\Gamma(\frac {n + 1} 2)} \frac {\Gamma(\frac {k + s - 1} 2 - l) \Gamma(\frac {n - k} 2)} {\Gamma(\frac {n + s - 1} 2 - l)} \\
    & = (-1)^{l - z} \frac {\Gamma(\frac {k + 1} 2) \Gamma(\frac {n - k} 2)} {\Gamma(\frac {n + 1} 2)} \frac {(k - 1) \pi}{2^{s} l! (\frac s 2 - l)!} \binom{\frac s 2 - l}{z - l} \frac {\Gamma(\frac {k + s - 1} 2 - l)} {\Gamma(\frac {n + s - 1} 2 - l)}.
  \end{align*}
  Furthermore, Legendre's duplication formula yields
  \begin{align*}
    s! S & = (-1)^{l - z} \frac {(k - 1) \sqrt\pi \Gamma(\frac {k + 1} 2) \Gamma(\frac {n - k} 2) \Gamma(\frac {s + 1} 2)} {\Gamma(\frac {n + 1} 2)}\underbrace{\binom{\frac s 2}{l} \binom{\frac s 2 - l}{z - l}}_{ = \binom{\frac s 2}{z} \binom{z}{l}} \frac {\Gamma(\frac {k + s - 1} 2 - l)} {\Gamma(\frac {n + s - 1} 2 - l)}.
  \end{align*}
  Thus, we obtain
  \begin{align*}
    & \int_{A(n, k)} \TenCM{k - 1}{r}{s}{0} (K \cap E, \beta \cap E) \, \mu_k(\intd E) \\
    & \qquad = \gamma_{n, k, k - 1} \frac{\pi^{\frac {n - k + 1} 2}}{\Gamma(\frac {n - k + s + 1} 2)} \frac {(k - 1) \Gamma(\frac {k + 1} 2) \Gamma(\frac {n - k} 2) \Gamma(\frac {s + 1} 2)} {\Gamma(\frac {n + 1} 2)} \sum_{z = 0}^{ \frac s 2 } \binom{\frac s 2}{z} \\
    & \qquad \qquad \times \sum_{l = 0}^{z} (-1)^{l - z} \binom{z}{l} \frac {\Gamma(\frac {k + s - 1} 2 - l)} {\Gamma(\frac {n + s - 1} 2 - l)} \, Q^{z} \TenCM{n - 1}{r}{s - 2z}{0} (K, \beta).
  \end{align*}
  From Lemma \ref{14-Lem_Zeil_1} we conclude
  \begin{align*}
    & \int_{A(n, k)} \TenCM{k - 1}{r}{s}{0} (K \cap E, \beta \cap E) \, \mu_k(\intd E) \\
    & \qquad = \gamma_{n, k, k - 1} \frac{\pi^{\frac {n - k + 1} 2}}{\Gamma(\frac {n - k + s + 1} 2)} \frac {(k - 1) \Gamma(\frac {k + 1} 2) \Gamma(\frac {s + 1} 2)} {\Gamma(\frac {n + 1} 2) \Gamma(\frac {n + s - 1} 2)} \\
    & \qquad \qquad \times \sum_{z = 0}^{ \frac s 2 } \binom{\frac s 2}{z} \Gamma(\tfrac {k + s - 1} 2 - z) \Gamma(\tfrac {n - k} 2 + z) Q^{z} \TenCM{n - 1}{r}{s - 2z}{0} (K, \beta).
  \end{align*}
  With
  \begin{align*}
    \gamma_{n, k, k - 1} = \binom{n - 2}{k - 1} \frac{\Gamma(\frac {n - k + 1} 2)}{2 \pi} = \frac {(n - 2)!}{(n - k - 1)! (k - 1)!} \frac{\Gamma(\frac {n - k + 1} 2)}{2 \pi}
  \end{align*}
  we get
  \begin{align*}
    & \int_{A(n, k)} \TenCM{k - 1}{r}{s}{0} (K \cap E, \beta \cap E) \, \mu_k(\intd E) \\
    & \qquad = \frac {(n - 2)!}{\Gamma(\frac {n + 1} 2)} \frac {\Gamma(\tfrac {n - k + 1} 2) }{(n - k - 1)!} \frac {\Gamma(\frac {k + 1} 2)}{(k - 2)!} \frac{\pi^{\frac {n - k - 1} 2} \Gamma(\frac {s + 1} 2)}{2 \Gamma(\frac {n + s - 1} 2) \Gamma(\frac {n - k + s + 1} 2)} \\
    & \qquad \qquad \times \sum_{z = 0}^{ \frac s 2 } \binom{\frac s 2}{z} \Gamma(\tfrac {k + s - 1} 2 - z) \Gamma(\tfrac {n - k} 2 + z) Q^{z} \TenCM{n - 1}{r}{s - 2z}{0} (K, \beta).
  \end{align*}
  Legendre's formula applied three times gives
  \begin{align*}
    & \int_{A(n, k)} \TenCM{k - 1}{r}{s}{0} (K \cap E, \beta \cap E) \, \mu_k(\intd E) \\
    & \qquad = \frac {k - 1} {n - 1} \frac {\Gamma(\frac {n} 2)} {\Gamma(\frac {k} 2) \Gamma(\frac {n - k} 2)}\frac{\pi^{\frac {n - k} 2} \Gamma(\frac {s + 1} 2)}{\Gamma(\frac {n + s - 1} 2) \Gamma(\frac {n - k + s + 1} 2)} \\
    & \qquad \qquad \times \sum_{z = 0}^{ \frac s 2 } \binom{\frac s 2}{z} \Gamma(\tfrac {k + s - 1} 2 - z) \Gamma(\tfrac {n - k} 2 + z) Q^{z} \TenCM{n - 1}{r}{s - 2z}{0} (K, \beta),
  \end{align*}
  which confirms the coefficients for even $s$.

  On the other hand, if $s$ is odd, then Lemma \ref{14-Lem_Zeil_2} yields
  \begin{align*}
    T & = \frac {\sqrt \pi}{2^{s} l! (p - l)!} \sum_{m = 0}^{\frac {s - 1} 2 - p} (-1)^{m} \frac {1} {m! \Gamma(\frac {s} 2 - p - m + 1)} \\
    & = \frac {\sqrt \pi}{2^{s} l! (p - l)!} \bigg( \sum_{m = 0}^{\frac {s + 1} 2 - p} (-1)^{m} \frac {1} {m! \Gamma(\frac {s} 2 - p - m + 1)} - (-1)^{\frac {s + 1} 2 - p} \frac {1} {(\frac {s + 1} 2 - p)! \Gamma(\frac {1} 2)} \bigg) \\
    & = \frac {\sqrt \pi}{2^{s} l! (p - l)!} \bigg( (-1)^{\frac {s + 1} 2 - p} \frac{1}{\sqrt\pi (- s + 2p) (\frac {s + 1} 2 - p)!}- (-1)^{\frac {s + 1} 2 - p} \frac {1} {\sqrt \pi (\frac {s + 1} 2 - p)!} \bigg) \\
    & = (-1)^{\frac {s - 1} 2 - p} \frac {\sqrt \pi}{2^{s} l! (p - l)!} \frac{1}{\sqrt\pi (\frac {s + 1} 2 - p)!} (\tfrac 1 {s - 2p} + 1) \\
    & = (-1)^{\frac {s - 1} 2 - p} \frac {1}{2^{s - 1} (s - 2p) (\frac {s - 1} 2 - p)! l! (p - l)!} \\
    & = (-1)^{\frac {s - 1} 2 - p} \frac {2 \Gamma(\frac s 2 + 1)}{\sqrt \pi (s - 2p) s!} \binom{\frac{s - 1}{2}}{p} \binom{p}{l}.
  \end{align*}
  Hence, we obtain
  \begin{align*}
    s! \sum_{l = 0}^{z} S & = \frac {2 (k - 1) \Gamma(\frac s 2 + 1)}{\sqrt \pi} \sum_{l = 0}^{z} \sum_{p = l}^{\frac {s - 1} 2} \sum_{q = (z - p)^+}^{\frac {s - 1} 2 - p} (-1)^{\frac {s - 1} 2 + l + p + q - z} \frac {1}{(s - 2p)} \\
    & \qquad \times \binom{\frac{s - 1}{2}}{p} \binom{p}{l} \binom{s - 2p}{2q} \binom{p + q - l}{z - l} \Gamma(q + \tfrac 1 2) \\
    & \qquad \times \frac {\Gamma(\frac {k + s + 1} 2 - p - q)} {\Gamma(\frac {n + s + 1} 2 - p)} \frac {\Gamma(\frac {k - 1} 2 + p - l) \Gamma(\frac {n - k} 2 + q)} {\Gamma(\frac {n - 1} 2 + p + q - l)}.
  \end{align*}
  This yields
  \begin{align*}
    & \int_{A(n, k)} \TenCM{k - 1}{r}{s}{0} (K \cap E, \beta \cap E) \, \mu_k(\intd E) \\
    & \quad = 2 (k - 1) \gamma_{n, k, k - 1} \frac{\pi^{\frac {n - k - 1} 2} \Gamma(\frac s 2 + 1)}{\Gamma(\frac {n - k + s + 1} 2)} \sum_{z = 0}^{\frac {s - 1} 2} Q^{z} \TenCM{n - 1}{r}{s - 2z}{0} (K, \beta) \\
    & \qquad \times \sum_{l = 0}^{z} \sum_{p = l}^{\frac {s - 1} 2} \sum_{q = (z - p)^+}^{\frac {s - 1} 2 - p} (-1)^{\frac {s - 1} 2 + l + p + q - z} \frac {1}{(s - 2p)} \binom{\frac{s - 1}{2}}{p} \binom{p}{l} \binom{s - 2p}{2q} \binom{p + q - l}{z - l} \\
    & \qquad \times \Gamma(q + \tfrac 1 2) \frac {\Gamma(\frac {k + s + 1} 2 - p - q)} {\Gamma(\frac {n + s + 1} 2 - p)} \frac {\Gamma(\frac {k - 1} 2 + p - l) \Gamma(\frac {n - k} 2 + q)} {\Gamma(\frac {n - 1} 2 + p + q - l)}.
  \end{align*}
  With
  \begin{align*}
    \gamma_{n, k, k - 1} = \binom{n - 2}{k - 1} \frac{\Gamma(\frac {n - k + 1} 2)}{2 \pi} = \frac {(n - 2)!}{(n - k - 1)! (k - 1)!} \frac{\Gamma(\frac {n - k + 1} 2)}{2 \pi}
  \end{align*}
  we get
  \begin{align*}
    & \int_{A(n, k)} \TenCM{k - 1}{r}{s}{0} (K \cap E, \beta \cap E) \, \mu_k(\intd E) \\
    &  = \frac {(n - 2)!}{(n - k - 1)! (k - 2)!} \frac{\pi^{\frac {n - k - 3} 2} \Gamma(\frac {n - k + 1} 2) \Gamma(\frac s 2 + 1)}{\Gamma(\frac {n - k + s + 1} 2)} \sum_{z = 0}^{\frac {s - 1} 2} Q^{z} \TenCM{n - 1}{r}{s - 2z}{0} (K, \beta) \\
    & \quad  \times \sum_{l = 0}^{z} \sum_{p = l}^{\frac {s - 1} 2} \sum_{q = (z - p)^+}^{\frac {s - 1} 2 - p} (-1)^{\frac {s - 1} 2 + l + p + q - z} \frac {1}{(s - 2p)} \binom{\frac{s - 1}{2}}{p} \binom{p}{l} \binom{s - 2p}{2q} \binom{p + q - l}{z - l} \\
    & \quad \times \Gamma(q + \tfrac 1 2) \frac {\Gamma(\frac {k + s + 1} 2 - p - q)} {\Gamma(\frac {n + s + 1} 2 - p)} \frac {\Gamma(\frac {k - 1} 2 + p - l) \Gamma(\frac {n - k} 2 + q)} {\Gamma(\frac {n - 1} 2 + p + q - l)}.
  \end{align*}
  We denote the threefold sum with respect to $l$, $p$ and $q$ by $R$. Hence, $R$ multiplied with the factor in front of the sum with respect to $z$ equals $\kappa_{n, k, s, z}$. A direct calculation for $R$  still remains an open task. However, for the proof this is not required.

  Finally, if $s$ is odd we calculate the only so far unknown coefficient $\kappa_{n, k, s, ( {s - 1})/ 2}$. For $z = ( {s - 1})/ 2$ we see that the sum over $q$ only contains one summand, namely $q = ( {s - 1})/ 2 - p$. Hence, we obtain
  \begin{align*}
    R & = \Gamma(\tfrac {k} 2 + 1) \sum_{l = 0}^{z} \sum_{p = l}^{\frac {s - 1} 2} (-1)^{\frac {s - 1} 2 + l} \binom{\frac{s - 1}{2}}{p} \binom{p}{l} \Gamma(\tfrac s 2 - p) \frac {\Gamma(\frac {k - 1} 2 + p - l) \Gamma(\frac {n - k + s - 1} 2 - p)} {\Gamma(\frac {n + s + 1} 2 - p) \Gamma(\frac {n + s} 2 - l - 1)} \\
    & = \Gamma(\tfrac {k} 2 + 1) \sum_{p = 0}^{\frac {s - 1} 2} (-1)^{\frac {s - 1} 2} \binom{\frac{s - 1}{2}}{p} \Gamma(\tfrac s 2 - p) \frac {\Gamma(\frac {n - k + s - 1} 2 - p)} {\Gamma(\frac {n + s + 1} 2 - p)} \sum_{l = 0}^{p} (-1)^{l} \binom{p}{l} \frac {\Gamma(\frac {k - 1} 2 + p - l)} {\Gamma(\frac {n + s} 2 - l - 1)}.
  \end{align*}
  Then Lemma \ref{14-Lem_Zeil_1} yields
  \begin{align*}
    R & = \frac {\Gamma(\tfrac {k} 2 + 1) \Gamma(\frac {k - 1} 2) \Gamma(\frac {n - k + s - 1} 2)} {\Gamma(\frac {n + s} 2 - 1)} \sum_{p = 0}^{\frac {s - 1} 2} (-1)^{\frac {s - 1} 2 + p} \binom{\frac{s - 1}{2}}{p} \frac {\Gamma(\frac s 2 - p)} {\Gamma(\frac {n + s + 1} 2 - p)} \\
    & = \frac {\Gamma(\tfrac {k} 2 + 1) \Gamma(\frac {k - 1} 2) \Gamma(\frac {n - k + s - 1} 2)} {\Gamma(\frac {n + s} 2 - 1)} \sum_{p = 0}^{\frac {s - 1} 2} (-1)^{p} \binom{\frac{s - 1}{2}}{p} \frac {\Gamma(\frac 1 2 + p)} {\Gamma(\frac {n} 2 + 1 + p)}.
  \end{align*}
  Again, we apply Lemma \ref{14-Lem_Zeil_1} and obtain
  \begin{align*}
    R & = \sqrt \pi \frac {\Gamma(\tfrac {k} 2 + 1) \Gamma(\frac {k - 1} 2)} {\Gamma(\frac {n + 1} 2)} \frac {\Gamma(\frac {n + s} 2) \Gamma(\frac {n - k + s - 1} 2)} {\Gamma(\frac {n + s} 2 - 1) \Gamma(\frac {n + s + 1} 2) }.
  \end{align*}
  Thus, we conclude
  \begin{align*}
    \kappa_{n, k, s, \frac {s - 1} 2} & = \frac {(n - 2)!}{(n - k - 1)! (k - 2)!} \frac{\pi^{\frac {n - k - 3} 2} \Gamma(\frac {n - k + 1} 2) \Gamma(\frac s 2 + 1)}{\Gamma(\frac {n - k + s + 1} 2)} R \\
    & = \pi^{\frac {n - k - 2} 2} \frac {(n - 2)!}{\Gamma(\frac {n + 1} 2)} \frac {\Gamma(\tfrac {k} 2 + 1) \Gamma(\frac {k - 1} 2)}{(k - 2)!} \frac {\Gamma(\frac {n - k + 1} 2)}{(n - k - 1)!} \frac {(n + s - 2) \Gamma(\frac s 2 + 1)} {(n - k + s - 1) \Gamma(\frac {n + s + 1} 2) }.
  \end{align*}
  Applying three times Legendre's formula gives
  $$
    \kappa_{n, k, s, \frac {s - 1} 2} = \pi^{\frac {n - k - 1} 2}
		\frac {2k (n + s - 2)} {(n - 1) (n - k + s - 1)}
		\frac {\Gamma(\frac {n} 2)}{\Gamma(\frac {n - k} 2)} \frac {\Gamma(\frac s 2 + 1)} { \Gamma(\frac {n + s + 1} 2) },
  $$
	which completes the argument.
  \end{proof}

  Next we prove Theorem \ref{14-Thm_ExtrCF_k=1}. As in the previous proof, one can compare the Crofton integral to the global one obtained in \cite[Theorem 3]{14-BernHug15}. However, we deduce it directly from Theorem \ref{14-Thm_Main_k=1}.

  \begin{proof}[Proof (Theorem \ref{14-Thm_ExtrCF_k=1})]
    Lemma \ref{14-Lem_McMullen_Int_Ext} yields
    \begin{align*}
      & \int_{A(n, 1)} \TenCM{0}{r}{s}{0} (K \cap E, \beta \cap E) \, \mu_1(\intd E) \\
      & \qquad = \frac{\pi^{\frac {n - 1} 2} s!}{\Gamma(\frac {n + s} 2)} \sum_{m = 0}^{\lfloor \frac s 2 \rfloor} \sum_{l = 0}^{m} (-1)^{m - l} \binom{m}{l} \frac {\Gamma(\frac {s + 1} 2 - m)} {4^{m} \, m! (s - 2m)!} Q^{l} \\
      & \qquad \qquad \times \int_{A(n, 1)} Q(E)^{m - l} \IntTenCM{0}{r}{s - 2m}{0}{E} (K \cap E, \beta \cap E) \, \mu_1(\intd E).
    \end{align*}
    If $s \in \N_0$ is even, we conclude from Theorem \ref{14-Thm_Main_k=1}
    \begin{align*}
      & \int_{A(n, 1)} \TenCM{0}{r}{s}{0} (K \cap E, \beta \cap E) \, \mu_1(\intd E) \\
      & \qquad = \frac{\pi^{\frac {n - 1} 2} s!}{\Gamma(\frac {n + s} 2)} \sum_{m = 0}^{\frac s 2} \sum_{l = 0}^{m} (-1)^{m - l} \binom{m}{l} \frac {\Gamma(\frac {s + 1} 2 - m)} {4^{m} \, m! (s - 2m)!} \\
      & \qquad \qquad \times \frac {\Gamma( \frac n 2 ) \Gamma(\frac {s + 1} 2 - l)} { \pi \Gamma ( \frac {n + s + 1} 2 - l) } \sum_{z = 0}^{\frac {s} {2} - l} (-1)^{z} \binom{\frac s 2 - l}{z} \frac {1} {1 - 2z} \, Q^{\frac s 2 - z} \TenCM{n - 1}{r}{2z}{0} (K, \beta).
    \end{align*}
    A change of the order of summation yields
    \begin{align*}
      & \int_{A(n, 1)} \TenCM{0}{r}{s}{0} (K \cap E, \beta \cap E) \, \mu_1(\intd E) \\
      & \qquad = \frac{\pi^{\frac {n - 1} 2} s!}{\Gamma(\frac {n + s} 2)} \sum_{l = 0}^{\frac s 2} \sum_{m = l}^{\frac s 2} (-1)^{m - l} \binom{m}{l} \frac {\Gamma(\frac {s + 1} 2 - m)} {4^{m} \, m! (s - 2m)!} \\
      & \qquad \qquad \times \frac {\Gamma( \frac n 2 ) \Gamma(\frac {s + 1} 2 - l)} { \pi \Gamma ( \frac {n + s + 1} 2 - l) } \sum_{z = 0}^{\frac {s} {2} - l} (-1)^{z} \binom{\frac s 2 - l}{z} \frac {1} {1 - 2z} \, Q^{\frac s 2 - z} \TenCM{n - 1}{r}{2z}{0} (K, \beta).
    \end{align*}
    Legendre's duplication formula gives for the sum with respect to $m$, which we denote by $S$,
    \begin{align*}
      S & = \frac {\sqrt \pi} {2^{s}} \sum_{m = l}^{\frac s 2} (-1)^{m - l} \binom{m}{l} \frac {1} {m! \Gamma(\frac {s} 2 - m + 1)} \\
      & = \frac {\sqrt \pi} {2^{s} l!} \sum_{m = 0}^{\frac s 2 - l} (-1)^{m} \frac {1} {m! \Gamma(\frac {s} 2 - l - m + 1)}.
    \end{align*}
    As seen before, we conclude from the binomial theorem
    \begin{align*}
      S & = \frac {\sqrt \pi} {2^{s} (\frac s 2 - l)! l!} \sum_{m = 0}^{\frac s 2 - l} (-1)^{m} \binom{\frac s 2 - l}{m} \\
      & = \1\{ l = \frac s 2 \} \frac {\Gamma(\frac {s + 1} 2)} {s!}.
    \end{align*}
    Hence, we obtain
    \begin{align*}
      \int_{A(n, 1)} \TenCM{0}{r}{s}{0} (K \cap E, \beta \cap E) \, \mu_1(\intd E)
      &  = \frac{\pi^{\frac {n - 3} 2} \Gamma(\frac {s + 1} 2)}{\Gamma(\frac {n + s} 2)} \frac {\Gamma( \frac n 2 ) \Gamma(\frac {1} 2)} {\Gamma ( \frac {n + 1} 2) } Q^{\frac s 2} \TenCM{n - 1}{r}{0}{0} (K, \beta) \\
      & = \pi^{\frac {n - 2} 2}\frac {\Gamma( \frac n 2 ) \Gamma(\frac {s + 1} 2)} {\Gamma(\frac {n + s} 2) \Gamma ( \frac {n + 1} 2) } Q^{\frac s 2} \TenCM{n - 1}{r}{0}{0} (K, \beta).
    \end{align*}

    On the other hand, if $s \in \N$ is odd, we conclude from Theorem \ref{14-Thm_Main_k=1}
    \begin{align*}
      & \int_{A(n, 1)} \TenCM{0}{r}{s}{0} (K \cap E, \beta \cap E) \, \mu_1(\intd E) \\
      & \qquad = \frac{\pi^{\frac {n - 2} 2} \Gamma(\frac {n} 2) s!}{\Gamma(\frac {n + s} 2)} \sum_{m = 0}^{ \frac {s-1} 2 } \sum_{l = 0}^{m} (-1)^{m - l} \frac {\Gamma(\frac {s + 1} 2 - m)} {4^{m} \, m! (s - 2m)!} \binom{m}{l} \frac {\Gamma(\frac {s} 2 - l + 1)} {\Gamma(\frac {n + s + 1} 2 - l)} \, Q^{\frac {s - 1} 2} \TenCM{n - 1}{r}{1}{0} (K, \beta).
    \end{align*}
    A change of the order of summation yields
    \begin{align*}
      & \int_{A(n, 1)} \TenCM{0}{r}{s}{0} (K \cap E, \beta \cap E) \, \mu_1(\intd E) \\
      & \qquad = \frac{\pi^{\frac {n - 2} 2} \Gamma(\frac {n} 2) s!}{\Gamma(\frac {n + s} 2)} \sum_{l = 0}^{ \frac {s-1} 2 } \sum_{m = l}^{\frac {s-1} 2} (-1)^{m - l} \frac {\Gamma(\frac {s + 1} 2 - m)} {4^{m} \, m! (s - 2m)!} \binom{m}{l} \frac {\Gamma(\frac {s} 2 - l + 1)} {\Gamma(\frac {n + s + 1} 2 - l)} \, Q^{\frac {s - 1} 2} \TenCM{n - 1}{r}{1}{0} (K, \beta).
    \end{align*}
    Legendre's duplication formula gives for the sum with respect to $m$, which we denote by $S$,
    \begin{align*}
      S & = \frac {\sqrt \pi} {2^{s} l!} \sum_{m = 0}^{\frac {s-1} 2 - l} (-1)^{m} \frac {1} { m! \Gamma(\frac {s} 2 - l - m + 1)}.
    \end{align*}
    Then Lemma \ref{14-Lem_Zeil_2} yields
    \begin{align*}
      S & = \frac {\sqrt \pi} {2^{s} l!} \bigg( \sum_{m = 0}^{\frac {s + 1} 2 - l} (-1)^{m} \frac {1} { m! \Gamma(\frac {s} 2 - l - m + 1)} - (-1)^{\frac {s + 1} 2 - l} \frac {1} { (\frac {s + 1} 2 - l)! \Gamma(\frac {1} 2)} \bigg) \\
      & = \frac {\sqrt \pi} {2^{s} l!} \bigg( (-1)^{\frac {s - 1} 2 - l} \frac {1} { \sqrt \pi (s - 2l) (\frac {s + 1} 2 - l)!} - (-1)^{\frac {s + 1} 2 - l} \frac {1} { \sqrt \pi (\frac {s + 1} 2 - l)!} \bigg) \\
      & = (-1)^{\frac {s - 1} 2 - l} \frac {1} {2^{s - 1} l! (s - 2l) (\frac {s - 1} 2 - l)!}.
    \end{align*}
    Hence, we obtain
    \begin{align*}
      & \int_{A(n, 1)} \TenCM{0}{r}{s}{0} (K \cap E, \beta \cap E) \, \mu_1(\intd E) \\
      & \qquad = \frac{\pi^{\frac {n - 2} 2} \Gamma(\frac {n} 2) s!}{2^{s} \Gamma(\frac {n + s} 2)} \sum_{l = 0}^{\frac {s - 1} 2} (-1)^{\frac {s - 1} 2 - l} \frac {1} {l! (\frac {s - 1} 2 - l)!} \frac {\Gamma(\frac {s} 2 - l)} {\Gamma(\frac {n + s + 1} 2 - l)} \, Q^{\frac {s - 1} 2} \TenCM{n - 1}{r}{1}{0} (K, \beta) \\
      & \qquad = \frac{\pi^{\frac {n - 2} 2} \Gamma(\frac {n} 2) s!}{2^{s} \Gamma (\frac {s + 1} 2) \Gamma(\frac {n + s} 2)} \sum_{l = 0}^{\frac {s - 1} 2} (-1)^{l} \binom{\frac {s - 1} 2}{l} \frac {\Gamma(l + \frac {1} 2)} {\Gamma(\frac {n + 2} 2 + l)} \, Q^{\frac {s - 1} 2} \TenCM{n - 1}{r}{1}{0} (K, \beta).
    \end{align*}
    Then Lemma \ref{14-Lem_Zeil_1} gives
    \begin{align*}
      & \int_{A(n, 1)} \TenCM{0}{r}{s}{0} (K \cap E, \beta \cap E) \, \mu_1(\intd E) \\
      & \qquad = \frac{s!}{2^{s} \Gamma (\frac {s + 1} 2)} \frac {\pi^{\frac {n - 1} 2} \Gamma(\frac {n} 2)} {\Gamma(\frac {n + s + 1} 2) \Gamma(\frac {n + 1} 2)} \, Q^{\frac {s - 1} 2} \TenCM{n - 1}{r}{1}{0} (K, \beta).
    \end{align*}
    Finally, the assertion follows from Legendre's duplication formula.
  \end{proof}

Finally, we show that the Crofton formula has a very simple form in the $\psi$-representation of tensorial curvature measures.

 \begin{proof}[Proof of Corollary \ref{14-Cor_CF_Psi_Basis}]
The cases $s\in\{0,1\}$ are checked directly, hence we can assume $s\ge 2$ in the following. Using
    \eqref{14-Def_Psi_Basis} we get
    \begin{align}
      & \int_{A(n, k)} \PTenCM{k - 1}{r}{s}{0} (K \cap E, \beta \cap E)\,
			\mu_{k}(\intd E) \nonumber \\
      & \qquad = \frac 1 {\sqrt \pi} \sum_{j = 0}^{\lfloor \frac s 2 \rfloor}
			(-1)^{j} \binom{s}{2j} \frac{\Gamma(j + \frac 1 2)
			\Gamma(\frac n 2 + s - j - 1)}{\Gamma(\frac n 2 + s - 1)} Q^{j} \nonumber \\
      & \qquad \qquad \qquad \times \int_{A(n, k)} \TenCM{k - 1}{r}{s - 2j}{0}
			(K \cap E, \beta \cap E) \,\mu_{k}(\intd E). \label{14-Form_CF_Psi_Basis_1}
    \end{align}
    Then, for $k \neq 1$, Theorem~\ref{14-Thm_ExtrCF_j=k-1} yields
    \begin{align*}
      & \int_{A(n, k)} \PTenCM{k - 1}{r}{s}{0} (K \cap E, \beta \cap E)\, \mu_{k}(\intd E) \\
      & \qquad = \frac 1 {\sqrt \pi} \sum_{j = 0}^{\lfloor \frac s 2 \rfloor} (-1)^{j} \binom{s}{2j} \frac{\Gamma(j + \frac 1 2) \Gamma(\frac n 2 + s - j - 1)}{\Gamma(\frac n 2 + s - 1)} \sum_{z = 0}^{\lfloor \frac s 2 \rfloor - j} \kappa_{n, k, s - 2j, z} \, Q^{z + j} \TenCM{n - 1}{r}{s - 2j - 2z}{0} (K, \beta) \\
      & \qquad = \frac 1 {\sqrt \pi} \sum_{j = 0}^{\lfloor \frac s 2 \rfloor} \sum_{z = j}^{\lfloor \frac s 2 \rfloor} (-1)^{j} \binom{s}{2j} \frac{\Gamma(j + \frac 1 2) \Gamma(\frac n 2 + s - j - 1)}{\Gamma(\frac n 2 + s - 1)} \kappa_{n, k, s - 2j, z - j} \, Q^{z} \TenCM{n - 1}{r}{s - 2z}{0} (K, \beta),
    \end{align*}
    where
    \begin{align*}
      \kappa_{n, k, s - 2j, z - j} = \frac{k - 1} {n - 1} \frac {\pi^{\frac {n - k} 2} \Gamma(\frac {n} 2)} {\Gamma(\frac {k} 2) \Gamma(\frac {n - k} 2)} \frac{\Gamma(\frac {s + 1} 2 - j) \Gamma(\frac s 2 - j + 1)}{\Gamma(\frac{n - k + s + 1}{2} - j) \Gamma(\frac {n + s - 1} 2 - j)} \frac{\Gamma(\tfrac {n - k} 2 + z - j) \Gamma(\tfrac {k + s - 1} 2 - z)}{\Gamma(\frac s 2 - z + 1) (z - j)!},
    \end{align*}
    if $z \neq  (s - 1) /2$. On the other hand, if $z = (s - 1) /2$, then the coeffcient needs to be multiplied by the factor $\frac {k(n + s - 2j - 2)} {(k - 1)(n + s - 2j - 1)}$ (see the comment after the proof of Theorem \ref{14-Thm_ExtrCF_j=k-1}).

    Applying Legendre's duplication formula twice, we thus obtain
    \begin{align*}
      & \int_{A(n, k)} \PTenCM{k - 1}{r}{s}{0} (K \cap E, \beta \cap E)\,\mu_{k}(\intd E) \\
      & \qquad = \frac{k - 1} {n - 1} \frac {\pi^{\frac {n - k + 1} 2} \Gamma(\frac {n} 2)} {\Gamma(\frac {k} 2) \Gamma(\frac {n - k} 2)} \frac {s!} {2^{s}} \sum_{z = 0}^{\lfloor \frac s 2 \rfloor} \frac{\Gamma(\tfrac {k + s - 1} 2 - z)}{z! \Gamma(\frac n 2 + s - 1) \Gamma(\frac s 2 - z + 1)} Q^{z} \TenCM{n - 1}{r}{s - 2z}{0} (K, \beta) \\
      & \qquad \qquad \times \sum_{j = 0}^{z} (-1)^{j} \binom{z}{j} \frac{\Gamma(\frac n 2 + s - j - 1) \Gamma(\tfrac {n - k} 2 + z - j)}{\Gamma(\frac{n - k + s + 1}{2} - j) \Gamma(\frac {n + s - 1} 2 - j)} \\
      & \qquad \qquad \times \left( 1 - \1\{ z = \tfrac {s - 1} 2 \} \left(1 - \tfrac {k(n + s - 2j - 2)} {(k - 1)(n + s - 2j - 1)} \right) \right),
    \end{align*}
    Denoting the sum with respect to $j$ by $S_{z}$, an application of Lemma \ref{14-Lem_Zeil_jz} shows that
    \begin{align}
      S_{z} & =  \sum_{j = 0}^{z} (-1)^{j} \binom{z}{j} \frac{\Gamma(\frac n 2 + s - j - 1) \Gamma(\tfrac {n - k} 2 + z - j)}{\Gamma(\frac{n - k + s + 1}{2} - j) \Gamma(\frac {n + s - 1} 2 - j)} \nonumber \\
      & = (-1)^{z} \frac {\Gamma(\frac{n - k}{2}) \Gamma(\frac{s + 1}{2}) \Gamma(\frac{k + s - 1}{2}) \Gamma(\frac{n}{2} + s - z - 1)} {\Gamma(\frac{n - k + s + 1}{2}) \Gamma(\frac{n + s - 1}{2}) \Gamma(\frac{s + 1}{2} - z) \Gamma(\frac{k + s - 1}{2} - z)}, \label{Form_Cor_PsiB}
    \end{align}
    for $z \neq (s-1)/ 2$ and $k>1$. On the other hand, for $z = ( {s - 1} )/2 =: t$, we obtain from Lemma~\ref{14-Lem_Zeil_jz} and Lemma~\ref{14-Lem_Zeil_jt} (since $s > 1$ and thus $t > 0$) that
    \begin{align*}
      S_{t} & = \tfrac {k} {k - 1} \sum_{j = 0}^{t} (-1)^{j} \binom{t}{j} \left( 1 - \tfrac {1} {n + 2t - 2j} \right) \frac{\Gamma(\frac n 2 + 2t - j) \Gamma(\tfrac {n - k} 2 + t - j)}{\Gamma(\frac{n - k}{2} + t - j + 1) \Gamma(\frac {n} 2 + t - j)} \\
      & = \tfrac {k} {k - 1} \bigg( \sum_{j = 0}^{t} (-1)^{j} \binom{t}{j} \frac{\Gamma(\frac n 2 + 2t - j) \Gamma(\tfrac {n - k} 2 + t - j)}{\Gamma(\frac{n - k}{2} + t - j + 1) \Gamma(\frac {n} 2 + t - j)} \\
      & \qquad \qquad - \sum_{j = 0}^{t} (-1)^{j} \binom{t}{j} \tfrac {1} {\frac {n - k} 2 + t - j} \frac{\Gamma(\frac n 2 + 2t - j)}{\Gamma(\frac {n} 2 + t - j + 1)} \bigg) \\
      & = (-1)^{t} \frac{\Gamma(\frac{n - k}{2}) \Gamma(t + 1) \Gamma(\frac{k}{2} + t)}{\Gamma(\frac{k}{2}) \Gamma(\frac{n - k}{2} + t + 1)},
    \end{align*}
    which coincides with \eqref{Form_Cor_PsiB} for $z = ({s - 1})/ 2$.

    Thus, we have
    \begin{align*}
      & \int_{A(n, k)} \PTenCM{k - 1}{r}{s}{0} (K \cap E, \beta \cap E) \,\mu_{k}(\intd E) \\
      & \qquad = \frac{k - 1} {n - 1} \frac {\pi^{\frac {n - k + 1} 2} \Gamma(\frac {n} 2) \Gamma(\frac{k + s - 1}{2})} {\Gamma(\frac {k} 2) \Gamma(\frac{n - k + s + 1}{2}) \Gamma(\frac{n + s - 1}{2})} \frac {s! \Gamma(\frac{s + 1}{2})} {2^{s}} \\
      & \qquad \qquad \times \sum_{z = 0}^{\lfloor \frac s 2 \rfloor} (-1)^{z} \frac{\Gamma(\frac{n}{2} + s - z - 1)} {z! \Gamma(\frac n 2 + s - 1) \Gamma(\frac s 2 - z + 1) \Gamma(\frac{s + 1}{2} - z)} Q^{z} \TenCM{n - 1}{r}{s - 2z}{0} (K, \beta).
    \end{align*}
    Applying Legendre's duplication formula twice, we get
    \begin{align*}
      & \int_{A(n, k)} \PTenCM{k - 1}{r}{s}{0} (K \cap E, \beta \cap E)\, \mu_{k}(\intd E) \\
      & \qquad = \frac{k - 1} {n - 1} \frac {\pi^{\frac {n - k} 2} \Gamma(\frac {n} 2) \Gamma(\frac{k + s - 1}{2}) \Gamma(\frac{s + 1}{2})} {\Gamma(\frac {k} 2) \Gamma(\frac{n - k + s + 1}{2}) \Gamma(\frac{n + s - 1}{2})} \\
      & \qquad \qquad \times \frac 1 {\sqrt \pi} \sum_{z = 0}^{\lfloor \frac s 2 \rfloor} (-1)^{z} \binom{s}{2z} \frac{\Gamma(z + \frac 1 2)\Gamma(\frac{n}{2} + s - z - 1)} {\Gamma(\frac n 2 + s - 1)} Q^{z} \TenCM{n - 1}{r}{s - 2z}{0} (K, \beta).
    \end{align*}
    With \eqref{14-Def_Psi_Basis} we obtain the assertion for $k \neq 1$.

    On the other hand, if $k = 1$, then Theorem \ref{14-Thm_ExtrCF_k=1} yields for \eqref{14-Form_CF_Psi_Basis_1} that
    \begin{align*}
      & \int_{A(n, 1)} \PTenCM{0}{r}{s}{0} (K \cap E, \beta \cap E) \,\mu_{k}(\intd E) \\
      & \qquad = \frac {\pi^{\frac {n - 3} 2} \Gamma(\frac {n} 2)} {\Gamma(\frac {n + 1} 2) \Gamma(\frac n 2 + s - 1)} \sum_{j = 0}^{\lfloor \frac s 2 \rfloor} (-1)^{j} \binom{s}{2j} \frac{\Gamma(j + \frac 1 2) \Gamma(\frac n 2 + s - j - 1) \Gamma(\lfloor \frac {s + 1} 2 \rfloor - j + \frac 1 2)}{\Gamma(\frac {n} 2 + \lfloor \frac {s + 1} 2 \rfloor - j)} \\
      & \qquad \qquad \qquad \times Q^{\lfloor \frac s 2 \rfloor} \TenCM{n - 1}{r}{ s  - 2\lfloor \frac s 2 \rfloor}{0} (K, \beta).
    \end{align*}
    Denoting the sum with respect to $j$ by $S$ and applying Legendre's duplication formula three times, we conclude that
    \begin{align*}
      S = \sqrt \pi \Gamma(\lfloor \tfrac {s + 1} 2 \rfloor + \tfrac 1 2) \sum_{j = 0}^{\lfloor \frac s 2 \rfloor} (-1)^{j} \binom{\lfloor \frac s 2 \rfloor}{j} \frac{\Gamma(\frac n 2 + s - j - 1)}{\Gamma(\frac {n} 2 + \lfloor \frac {s + 1} 2 \rfloor - j)}.
    \end{align*}
   Since $s\ge 2$,  Lemma \ref{14-Lem_Zeil_1} yields $S = 0$ due to \eqref{14-proofLem7below}, and hence the assertion.
  \end{proof}

    \section{Sums of Gamma Functions}\label{14-sec5}

    In this section, we state four basic identities involving sums of Gamma
    functions.

    \begin{lemma} \label{14-Lem_Zeil_1} Let $q \in \N_0$ and $a, b >
      0$. Then
      \begin{equation*}
        \sum_{y=0}^q (-1)^{y} \binom{q}{y} \frac {\Gamma(a + y)} {\Gamma(b + y)} = \frac {\Gamma(a) \Gamma(b - a + q)} {\Gamma(b + q) \Gamma(b - a)}.
      \end{equation*}
    \end{lemma}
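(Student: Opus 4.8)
The plan is to reduce the identity to the ordinary binomial theorem by writing the ratio of Gamma functions as a Beta integral. Assume for the moment that $b>a>0$. Then
\[
\frac{\Gamma(a+y)}{\Gamma(b+y)} = \frac{\Gamma(a+y)}{\Gamma((a+y)+(b-a))} = \frac{1}{\Gamma(b-a)}\int_0^1 t^{a+y-1}(1-t)^{b-a-1}\,\intd t
\]
for every $y\in\N_0$, by the definition of the Beta function and its relation to the Gamma function (here $a+y>0$ and $b-a>0$, so the integral converges). Substituting this into the left-hand side of the asserted identity and interchanging the \emph{finite} sum with the integral, I would pull out $t^{a-1}(1-t)^{b-a-1}$ and recognize $\sum_{y=0}^q\binom{q}{y}(-t)^y=(1-t)^q$. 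This turns the left-hand side into
\[
\frac{1}{\Gamma(b-a)}\int_0^1 t^{a-1}(1-t)^{b-a+q-1}\,\intd t = \frac{B(a,b-a+q)}{\Gamma(b-a)} = \frac{\Gamma(a)\,\Gamma(b-a+q)}{\Gamma(b+q)\,\Gamma(b-a)},
\]
which is exactly the right-hand side.

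To remove the temporary assumption $b>a$, I would observe that, for fixed $q$, both sides are meromorphic in $(a,b)$ — the left-hand side as a finite linear combination of ratios of Gamma functions, the right-hand side as an explicit ratio of Gamma functions with the convention $1/\Gamma\equiv 0$ at the poles of $\Gamma$. Since the two sides agree on the non-empty open set $\{b>a>0\}$, they agree on the entire common domain by analytic continuation, and in particular for all $a,b>0$. Alternatively, and avoiding analytic continuation altogether, the identity can be proved by induction on $q$: the case $q=0$ is immediate, and for the step one splits $\binom{q+1}{y}=\binom{q}{y}+\binom{q}{y-1}$, reindexes the second piece by $y\mapsto y+1$, and applies the inductive hypothesis once with parameters $(a,b)$ and once with $(a+1,b+1)$; collecting the two resulting terms over the common factor $\Gamma(b-a+q)/\Gamma(b-a)$ and simplifying via $\Gamma(x+1)=x\Gamma(x)$ (for $x=b-a+q$ and $x=b+q$) reproduces the claimed formula with $q$ replaced by $q+1$.

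I do not expect a genuine obstacle here; the only delicate point is the bookkeeping when $b-a$ is a non-positive integer, so that $\Gamma(b-a)$ ``is infinite'' and the right-hand side is to be read as $0$ (consistently with the normalization $\Gamma(0)^{-1}:=0$ used elsewhere in the paper). This is handled uniformly by the meromorphic-continuation viewpoint above, and in the cases actually needed in the later sections the parameters lie in a range where no such ambiguity arises, so the Beta-integral computation applies directly.
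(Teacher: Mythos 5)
Your proposal is correct, but it takes a genuinely different route from the paper. The authors prove this lemma via Zeilberger's algorithm: they exhibit a certificate function $G(q,y)$ for which the summand satisfies a telescoping relation, sum over $y$ to obtain the first-order recurrence $-(b+q-1)f(q)+(b-a+q-1)f(q-1)=0$, and iterate down to $f(0)=\Gamma(a)/\Gamma(b)$. Your Beta-integral argument is essentially the proof of Lemma 15.6.4 in Adler--Taylor, which the paper explicitly cites and rejects as insufficient precisely because it requires $b>a$; your analytic-continuation step is the legitimate missing upgrade, since for fixed $q$ both sides are holomorphic on $\{a,b>0\}$ once $\Gamma(b-a+q)/\Gamma(b-a)$ is read as the polynomial $(b-a)(b-a+1)\cdots(b-a+q-1)$ (which is exactly the interpretation the paper adopts in \eqref{14-proofLem7below}). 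One small imprecision: your parenthetical ``convention $1/\Gamma\equiv 0$'' does not by itself make the right-hand side well defined when $a-b$ is an integer with $a-b\ge q$, since then $\Gamma(b-a+q)$ sits at a pole as well and one faces $0\cdot\infty$; the polynomial reading, which your meromorphic-continuation viewpoint delivers, is the correct one and is what the later applications use. Your induction alternative via Pascal's rule, applied with parameters $(a,b)$ and $(a+1,b+1)$, checks out and is arguably the cleanest self-contained argument --- it produces in effect the same recurrence in $q$ that the paper extracts from the WZ certificate, without needing the certificate or any integral representation.
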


   Under the additional assumption $a<b$, this lemma can be found as Lemma 15.6.4 in \cite{14-AdleTayl07},
    which is also proved there. Since this case is not sufficient for our purposes, we deduce the
		current more general version via
    Zeilberger's algorithm.
		
		The factor $\Gamma(b - a + q)$ in Lemma \ref{14-Lem_Zeil_1} does not cause any problems in case $a - b - q \in \N_{0}$, as the also appearing $\Gamma(b - a)$ cancels out the singularity, see \eqref{14-proofLem7below}.

\begin{proof}
    We set
    \begin{align*}
       F(q, y) := (-1)^{y} \binom{q}{y} \frac {\Gamma(a + y)} {\Gamma(b + y)},
    \end{align*}
    for which we see that $F(q, y) = 0$ if $y \notin  \{ 0, \ldots, q \}$, and
    \begin{align*}
      f(q) := \sum_{y = 0}^{q} F(q, y).
    \end{align*}
    Furthermore, we define the function
    \begin{align*}
      G(q, y) :=
        \begin{cases}
          \frac{y (b + y - 1)}{q - y + 1} F(q, y), \qquad & \text{ for } y \in \{ 0, \ldots, q\}, \\
          G(q, q) -(b + q) F(q + 1, q) & \\
          \qquad + (b - a + q) F(q, q), \qquad & \text{ for } y = q + 1, \\
          0, & \text{ else}.
        \end{cases}
    \end{align*}
    A direct calculation yields
    \begin{align*}
      -(b + q - 1) F(q, y) + (b - a + q - 1) F(q - 1, y) = G(q - 1, y + 1) - G(q - 1, y)
    \end{align*}
    for $y \in \N_{0}$. Summing this relation over $y \in \{ 0, \ldots, q \}$ gives
    \begin{align*}
      - (b + q - 1) f(q) + (b - a + q - 1) f(q - 1) = 0
    \end{align*}
    and thus
    \begin{align*}
      f(q) & = \frac {b - a + q - 1} {b + q - 1} f(q - 1) \\
      & = \frac {(b - a + q - 2)(b - a + q - 1)} {(b + q - 2) (b + q - 1)} f(q - 2) \\
      & \ \, \vdots \\
      & = \frac {(b - a) \cdots (b - a + q - 1)} {b \cdots (b + q - 1)} f(0) \\
      & =  \frac {\Gamma(b - a + q) \Gamma(b)} {\Gamma(b + q) \Gamma(b - a)} f(0),
    \end{align*}
    where
		\begin{equation}\label{14-proofLem7below}
		\frac {\Gamma(b - a + q)} {\Gamma(b - a)} = (b - a) \cdots (b - a + q - 1)
		\end{equation}
		is well-defined, even for $a - b \in \N$.
    With
    \begin{align*}
      f(0) = \frac{\Gamma(a)}{\Gamma(b)}
    \end{align*}
    we obtain the assertion.
  \end{proof}

    \begin{lemma} \label{14-Lem_Zeil_2} Let $a \in
      \N_0$. Then\begin{equation*} \sum_{q = 0}^{a} \frac {(-1)^{q}
        }{\Gamma(a - q + \frac {1} 2) q!} = \frac{(-1)^{a}}{\sqrt \pi
          (1 - 2a)a !}.
      \end{equation*}
    \end{lemma}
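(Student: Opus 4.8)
The plan is to reduce the claimed identity to Lemma~\ref{14-Lem_Zeil_1}. First I would clear the factorial in the denominator by multiplying and dividing by $a!$, which turns $\tfrac1{q!}$ into $\binom{a}{q}\tfrac{(a-q)!}{a!}$; using $\Gamma(a-q+1)=(a-q)!$ this recasts the left-hand side as $\tfrac1{a!}\sum_{q=0}^{a}(-1)^q\binom{a}{q}\tfrac{\Gamma(a-q+1)}{\Gamma(a-q+\frac12)}$. Reversing the order of summation via $q\mapsto a-q$ and using $\binom{a}{a-q}=\binom{a}{q}$ together with $(-1)^{a-q}=(-1)^a(-1)^q$ brings the sum into the shape $\tfrac{(-1)^a}{a!}\sum_{q=0}^{a}(-1)^q\binom{a}{q}\tfrac{\Gamma(q+1)}{\Gamma(q+\frac12)}$, which is precisely the left-hand side of Lemma~\ref{14-Lem_Zeil_1} with parameters $a\mapsto 1$, $b\mapsto\tfrac12$ and summation length $a$.

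Next I would apply Lemma~\ref{14-Lem_Zeil_1}, obtaining that this sum equals $\tfrac{\Gamma(1)\Gamma(\frac12-1+a)}{\Gamma(\frac12+a)\Gamma(\frac12-1)}=\tfrac{\Gamma(a-\frac12)}{\Gamma(a+\frac12)\,\Gamma(-\frac12)}$. Here $b-a=-\tfrac12$ is not a non-positive integer, so $\Gamma(-\tfrac12)=-2\sqrt\pi$ is finite and no singularity issue arises (the remark following Lemma~\ref{14-Lem_Zeil_1} is not even needed). Then the functional equation $\Gamma(a+\tfrac12)=(a-\tfrac12)\Gamma(a-\tfrac12)$ gives $\tfrac{\Gamma(a-\frac12)}{\Gamma(a+\frac12)}=\tfrac{1}{a-\frac12}=\tfrac{2}{2a-1}$, so the sum evaluates to $\tfrac{2}{(2a-1)(-2\sqrt\pi)}=\tfrac{1}{(1-2a)\sqrt\pi}$. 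Multiplying by the prefactor $\tfrac{(-1)^a}{a!}$ then yields $\tfrac{(-1)^a}{\sqrt\pi\,(1-2a)\,a!}$, which is the assertion; I would also record the trivial base case $a=0$ as a sanity check, where both sides equal $1/\sqrt\pi$.

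I do not expect a genuine obstacle. The only point needing a little care is that Lemma~\ref{14-Lem_Zeil_1} is here invoked with $b<a$, so one should make explicit that $\Gamma(b-a)=\Gamma(-\tfrac12)$ is evaluated at a genuine (non-pole) argument; this is what makes the reduction clean. If for some reason one preferred a self-contained argument, the same manipulations combined with Legendre's duplication formula reduce the statement to the hypergeometric-type identity $\sum_{j=0}^{a}(-1)^j 4^j\binom{a}{j}\binom{2j}{j}^{-1}=\tfrac{1}{1-2a}$, which could be proved directly by creative telescoping in the spirit of the proof of Lemma~\ref{14-Lem_Zeil_1}; but invoking Lemma~\ref{14-Lem_Zeil_1} is shorter and is the route I would take.
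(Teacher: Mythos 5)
Your proof is correct, but it takes a genuinely different route from the paper's. The paper proves Lemma~\ref{14-Lem_Zeil_2} by a self-contained one-step telescoping argument: it rewrites the summand as
\begin{equation*}
\frac{(-1)^{q}}{\Gamma(a-q+\tfrac12)\,q!}
=\frac{2q}{2a-1}\,\frac{(-1)^{q}}{\Gamma(a-q+\tfrac12)\,q!}
+\frac{2q+2}{2a-1}\,\frac{(-1)^{q}}{\Gamma(a-q-\tfrac12)\,(q+1)!},
\end{equation*}
using $\Gamma(a-q+\tfrac12)=(a-q-\tfrac12)\Gamma(a-q-\tfrac12)$, after which consecutive terms cancel and only the boundary term survives (together with $(-\tfrac12)\Gamma(-\tfrac12)=\sqrt{\pi}$), giving the right-hand side directly. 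You instead reverse the summation and recognize the sum as an instance of Lemma~\ref{14-Lem_Zeil_1} with parameters $a\mapsto 1$, $b\mapsto\tfrac12$; I checked the bookkeeping (the rewriting of $1/q!$ via $\binom{a}{q}$, the sign $(-1)^{a-q}=(-1)^a(-1)^q$, the evaluation $\Gamma(-\tfrac12)=-2\sqrt{\pi}$, and the ratio $\Gamma(a-\tfrac12)/\Gamma(a+\tfrac12)=(a-\tfrac12)^{-1}$) and it all works, including the edge case $a=0$. There is no circularity, since the paper proves Lemma~\ref{14-Lem_Zeil_1} independently. Your route is conceptually cleaner in that it exposes Lemma~\ref{14-Lem_Zeil_2} as a special case of the Gauss-type summation already in the paper and avoids having to guess a telescoping certificate; the paper's route is shorter once the certificate is written down and keeps the two lemmas independent. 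You are also right to flag that your application uses Lemma~\ref{14-Lem_Zeil_1} in the regime $a>b$ (here $1>\tfrac12$), which is precisely the extension beyond the Adler--Taylor version that the paper establishes via Zeilberger's algorithm, and that $\Gamma(b-a)=\Gamma(-\tfrac12)$ is evaluated away from a pole so no regularization is needed.
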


  \begin{proof}
    For the sum $S$ on the left-hand side of the asserted equation, we
    obtain
    \begin{equation*}
      S = \sum_{q = 0}^{a} \left( \frac{2q}{2a - 1} \frac {(-1)^{q} }{\Gamma(a - q + \frac {1} 2) q!} + \frac{2q + 2}{2a - 1} \frac {(-1)^{q} }{\Gamma(a - q - \frac {1} 2) (q + 1)!} \right),
    \end{equation*}
    where we use that $(-\frac{1}{2})\Gamma(-\frac{1}{2})=\sqrt{\pi}$.
    Due to cancellation in this telescoping sum, the assertion follows
    immediately.
  \end{proof}

Finally, we establish the following lemmas.

  \begin{lemma} \label{14-Lem_Zeil_jz}
    Let $a, b, c \in \R$ and $z \in \N_{0}$ with $a > z\ge 0$ and $b > 0$. Then
    \begin{align*}
      & \sum_{j = 0}^{z} (-1)^{j} \binom{z}{j} \frac {\Gamma(a - j) \Gamma(b + z - j)} {\Gamma(c - j) \Gamma(a + b - c - j + 1)} \\
      & \qquad = (-1)^{z} \frac {\Gamma(a - z) \Gamma(b)} {\Gamma(a + b - c + 1) \Gamma(c)} \frac {\Gamma(a - c + 1)} {\Gamma(a - c +1 - z )} \frac {\Gamma(c - b)} {\Gamma(c - b - z)}.
    \end{align*}
  \end{lemma}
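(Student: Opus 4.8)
The plan is to recognize the sum as a terminating \emph{balanced} (Saalschützian) ${}_3F_2$ at argument $1$ and to evaluate it by the Pfaff--Saalschütz summation theorem. The first step is purely formal: rewrite everything in Pochhammer symbols $(x)_j=\Gamma(x+j)/\Gamma(x)$. Using the elementary identity $\Gamma(x-j)=\Gamma(x)\big/\big((-1)^j(1-x)_j\big)$ one finds $(-1)^j\binom zj=(-z)_j/j!$, $\Gamma(a-j)=\Gamma(a)(-1)^j/(1-a)_j$, $\Gamma(b+z-j)=\Gamma(b+z)(-1)^j/(1-b-z)_j$, $1/\Gamma(c-j)=(-1)^j(1-c)_j/\Gamma(c)$ and $1/\Gamma(a+b-c+1-j)=(-1)^j(c-a-b)_j/\Gamma(a+b-c+1)$. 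The four surviving sign factors $(-1)^j$ multiply to $1$, so after pulling the $j$-independent Gamma functions out of the sum the left-hand side becomes
\[
  \frac{\Gamma(a)\,\Gamma(b+z)}{\Gamma(c)\,\Gamma(a+b-c+1)}\;
  {}_3F_2\!\left(-z,\,1-c,\,c-a-b;\;1-a,\,1-b-z;\,1\right),
\]
where the hypergeometric series stands for the finite sum $\sum_{j=0}^{z}\frac{(-z)_j(1-c)_j(c-a-b)_j}{(1-a)_j(1-b-z)_j\,j!}$.

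Before evaluating, I would record two routine points. First, the hypotheses $a>z\ge 0$ and $b>0$ force $(1-a)_j\ne 0$ and $(1-b-z)_j\ne 0$ for every $0\le j\le z$ (the product $(1-a)_j$ vanishes only if $a\in\{1,\dots,j\}$, impossible since $a>z\ge j$; similarly for $(1-b-z)_j$), so each summand is well defined. Second, in the degenerate cases where $1/\Gamma(c)=0$ or $1/\Gamma(a+b-c+1)=0$ the same reasoning shows that every summand of the original sum already vanishes and that the right-hand side of the lemma vanishes as well, so the identity is trivially true; hence one may assume these factors are nonzero and the factorization above is legitimate. Then a one-line check shows the series is balanced: the sum of the two lower parameters, $2-a-b-z$, exceeds the sum of the three upper parameters, $1-a-b-z$, by exactly $1$. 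Therefore the Pfaff--Saalschütz theorem applies (with $n=z$, upper parameters $1-c$ and $c-a-b$, and lower parameter $1-a$, the remaining lower parameter $1-b-z$ being then forced) and yields
\[
  {}_3F_2\!\left(-z,\,1-c,\,c-a-b;\;1-a,\,1-b-z;\,1\right)
  =\frac{(c-a)_z\,(1+b-c)_z}{(1-a)_z\,(b)_z}.
\]

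It remains to translate this back into Gamma functions matching the statement. Using the reversal $(x)_z=(-1)^z(1-x-z)_z$ together with $(y)_z=\Gamma(y+z)/\Gamma(y)$, one gets $(c-a)_z=(-1)^z\Gamma(a-c+1)/\Gamma(a-c+1-z)$, $(1+b-c)_z=(-1)^z\Gamma(c-b)/\Gamma(c-b-z)$, $(1-a)_z=(-1)^z\Gamma(a)/\Gamma(a-z)$ and $(b)_z=\Gamma(b+z)/\Gamma(b)$; substituting and cancelling $\Gamma(a)$ and $\Gamma(b+z)$ against the prefactor produces precisely the asserted closed form. I do not anticipate a genuine obstacle: the one idea needed is to spot that the sum is a balanced terminating ${}_3F_2$, and the only care required is the sign bookkeeping in the Pochhammer--Gamma conversions plus the degenerate-parameter remark above. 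As an alternative in the spirit of the proof of Lemma~\ref{14-Lem_Zeil_1}, one could instead apply Zeilberger's algorithm to the summand as a function of $z$: it produces the first-order recurrence $f(z)=-\frac{(a-c+1-z)(c-b-z)}{a-z}\,f(z-1)$ for the left-hand side $f(z)$, and solving it with the initial value $f(0)=\Gamma(a)\Gamma(b)\big/\big(\Gamma(c)\Gamma(a+b-c+1)\big)$ reproduces the right-hand side; in that route the work shifts to exhibiting and verifying the telescoping certificate.
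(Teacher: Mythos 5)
Your proof is correct, and it takes a genuinely different route from the paper. The paper proves Lemma~\ref{14-Lem_Zeil_jz} by a Zeilberger/WZ-style argument: it exhibits an explicit certificate $G(z,j)$ for which $(a-z)F(z,j)+(c-b-z)(a-c-z+1)F(z-1,j)=G(z-1,j+1)-G(z-1,j)$, sums over $j$ to obtain the first-order recurrence $f(z)=-\frac{(c-b-z)(a-c-z+1)}{a-z}f(z-1)$, and iterates from $f(0)=\Gamma(a)\Gamma(b)/(\Gamma(c)\Gamma(a+b-c+1))$ --- exactly the alternative you sketch in your closing remark, with the same recurrence. Your main argument instead recognizes the sum as the terminating balanced series ${}_3F_2(-z,\,1-c,\,c-a-b;\,1-a,\,1-b-z;\,1)$ and invokes Pfaff--Saalsch\"utz; I checked the Pochhammer conversions, the balance condition (lower-parameter sum $2-a-b-z$ exceeds upper-parameter sum $1-a-b-z$ by $1$), the identification of $(C-A)_z(C-B)_z/\bigl((C)_z(C-A-B)_z\bigr)=(c-a)_z(1+b-c)_z/\bigl((1-a)_z(b)_z\bigr)$, and the back-translation, and all are right; your verification that $a>z$ and $b>0$ keep $(1-a)_j$ and $(1-b-z)_j$ nonzero, and your handling of the degenerate cases $1/\Gamma(c)=0$ or $1/\Gamma(a+b-c+1)=0$, address the relevant parameter pitfalls (which the paper handles in its remark about $\Gamma(a-c+1)/\Gamma(a-c+1-z)$ and $\Gamma(c-b)/\Gamma(c-b-z)$ being finite products). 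What each approach buys: yours explains structurally \emph{why} a closed form exists (the sum is Saalsch\"utzian) and outsources the computation to a classical theorem, at the cost of importing that theorem; the paper's telescoping certificate is self-contained and verifiable by a single polynomial identity, but offers no insight into where the closed form comes from.
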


  The factor $\Gamma(a - c + 1)$ (resp. $\Gamma(c - b)$) in Lemma \ref{14-Lem_Zeil_jz} does not cause any problems for $c - a \in \N$ (resp. $b - c \in \N_{0}$), as the also appearing $\Gamma(a - c + 1 - z )$ (resp. $\Gamma(c - b - z)$) cancels out the singularity.
	On the other hand, in our applications of the lemma, we only need the cases where $a-c+1>z$ and $c-b>z$.

  \begin{proof}
    We set
    \begin{align*}
       F(z, j) := (-1)^{j} \binom{z}{j} \frac {\Gamma(a - j) \Gamma(b + z - j)} {\Gamma(c - j) \Gamma(a + b - c - j + 1)},
    \end{align*}
    for $j \in  \{ 0, \ldots, z \}$, and $F(z, j) = 0$ in all other cases, and
    \begin{align*}
      f(z) := \sum_{j = 0}^{z} F(z,j).
    \end{align*}
    Furthermore, we define the function
    \begin{align*}
      G(z, j) :=
        \begin{cases}
          - \frac{j (a - j) (b + z - j)}{z - j + 1} F(z, j), \qquad & \text{ for } j \in \{ 0, \ldots, z \}, \\
          G(z, z) + (a - z - 1) F(z + 1, z) & \\
          \qquad + (c - b - z - 1) (a - c - z) F(z, z), \qquad & \text{ for } j = z + 1, \\
          0, & \text{ otherwise}.
        \end{cases}
    \end{align*}
    A direct calculation yields
    \begin{align*}
      (a - z) F(z, j) + (c - b - z) (a - c - z + 1) F(z - 1, j) = G(z - 1, j + 1) - G(z - 1, j)
    \end{align*}
    for $j \in \N_{0}$. Summing this relation over $j \in \{ 0, \ldots, z \}$ gives
    \begin{align*}
      (a - z) f(z) + (c - b - z) (a - c - z + 1) f(z - 1) = 0
    \end{align*}
    and thus
    \begin{align*}
      f(z) & = - \frac {(c - b - z) (a - c - z + 1)} {a - z} f(z - 1) \\
      & = \frac {(c - b - z) (c - b - z + 1) (a - c - z + 1) (a - c - z + 2)} {(a - z)(a - z + 1)} f(z - 2) \\
      & \ \, \vdots \\
      & = (-1)^{z} \frac {(c - b - z) \cdots (c - b - 1) (a - c - z + 1) \cdots (a - c)} {(a - z) \cdots (a - 1)} f(0) \\
      & = (-1)^{z} \frac {\Gamma(c - b) \Gamma(a - c + 1) \Gamma(a - z)} {\Gamma(c - b - z) \Gamma(a - c+1 - z ) \Gamma(a)} f(0),
    \end{align*}
    where $$\frac {\Gamma(c - b)} {\Gamma(c - b - z)} = (c - b - z) \cdots (c - b - 1)$$ is well-defined, even for $b - c \in \N_{0}$,
		and a similar statement holds for  $ {\Gamma(a - c + 1)} /\Gamma(a - c +1- z )$.
    With
    \begin{align*}
      f(0) = \frac {\Gamma(a) \Gamma(b)} {\Gamma(c) \Gamma(a + b - c + 1)}
    \end{align*}
    we obtain the assertion.
  \end{proof}

  \begin{lemma} \label{14-Lem_Zeil_jt}
    Let $a, b \in \R$ with $a, b > 0$ and $t \in \N$. Then
    \begin{align*}
      & \sum_{j = 0}^{t} (-1)^{j} \frac{1}{b + j} \binom{t}{j} \frac {\Gamma(a + t + j)} {\Gamma(a + 1 + j)} = \frac {\Gamma(a - b + t) \Gamma(b) \Gamma(t + 1)} {\Gamma(a - b + 1) \Gamma(b + t + 1)}.
    \end{align*}
  \end{lemma}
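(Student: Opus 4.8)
The plan is to reduce the identity to the special case of Lemma~\ref{14-Lem_Zeil_1} in which the summand is $\tfrac{1}{b+j}$, by a polynomial division argument. The key observation is that, since $t\in\N$, the factor
\[
  \frac{\Gamma(a+t+j)}{\Gamma(a+1+j)} \;=\; \prod_{i=1}^{t-1}(a+i+j) \;=:\; P(j)
\]
is a monic polynomial in $j$ of degree $t-1$ (for $t=1$ it is the empty product $1$). Here it is essential that $t\ge 1$: for $t=0$ the quotient $\Gamma(a+j)/\Gamma(a+1+j)=(a+j)^{-1}$ is not a polynomial, which is why the case $t=0$ is (correctly) excluded.

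First I would carry out the Euclidean division of $P$ by the linear polynomial $b+j$, writing $P(j)=(b+j)\,Q(j)+P(-b)$, where $Q$ is a polynomial of degree $t-2$ and the remainder is the constant
\[
  P(-b)\;=\;\prod_{i=1}^{t-1}(a-b+i)\;=\;\frac{\Gamma(a-b+t)}{\Gamma(a-b+1)},
\]
the latter equality being read, as usual in this section, as this finite product (so no positivity of $a-b+1$ is needed). Dividing by $b+j$ gives $\frac{P(j)}{b+j}=Q(j)+\frac{P(-b)}{b+j}$, hence the sum in question splits as
\[
  \sum_{j=0}^{t}(-1)^j\binom{t}{j}\frac{P(j)}{b+j}
  \;=\;\sum_{j=0}^{t}(-1)^j\binom{t}{j}Q(j)\;+\;P(-b)\sum_{j=0}^{t}(-1)^j\binom{t}{j}\frac{1}{b+j}.
\]
The first sum on the right vanishes because $\deg Q=t-2<t$, and the alternating binomial sum of a polynomial over the $t+1$ nodes $0,\dots,t$ is $\pm$ its $t$-th finite difference, hence zero when the degree is below $t$. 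For the second sum I would invoke Lemma~\ref{14-Lem_Zeil_1} with its parameters $a,b$ replaced by $b,b+1$ (legitimate since $b,b+1>0$), using $\tfrac{1}{b+j}=\tfrac{\Gamma(b+j)}{\Gamma(b+1+j)}$, which yields $\sum_{j=0}^{t}(-1)^j\binom{t}{j}\frac{1}{b+j}=\frac{\Gamma(b)\Gamma(t+1)}{\Gamma(b+t+1)}$. Multiplying the two pieces produces exactly $\frac{\Gamma(a-b+t)\Gamma(b)\Gamma(t+1)}{\Gamma(a-b+1)\Gamma(b+t+1)}$, the asserted value.

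There is no genuine obstacle beyond spotting the division trick; the only points needing a little care are verifying that the quotient $Q$ has degree exactly $t-2$ (so the vanishing of the alternating sum applies) and identifying the remainder $P(-b)$ with $\Gamma(a-b+t)/\Gamma(a-b+1)$. As an alternative consistent with the preceding lemmas, one could instead set $f(t):=\sum_{j=0}^{t}(-1)^j\tfrac{1}{b+j}\binom{t}{j}\tfrac{\Gamma(a+t+j)}{\Gamma(a+1+j)}$, produce a first-order recurrence in $t$ via a telescoping certificate $G(t,j)$ (Zeilberger's algorithm), and check a base case, but the argument above avoids that computation entirely.
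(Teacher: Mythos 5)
Your proof is correct, and it takes a genuinely different route from the paper. The paper proves this lemma by Zeilberger's algorithm: it exhibits a certificate $G(t,j)$ satisfying a telescoping relation whose sum over $j$ yields the first-order recurrence $(b+t)f(t)=t(a-b+t-1)f(t-1)$, and then iterates down to the base case $f(1)=\tfrac{1}{b(b+1)}$. You instead observe that for $t\ge 1$ the ratio $\Gamma(a+t+j)/\Gamma(a+1+j)=\prod_{i=1}^{t-1}(a+i+j)$ is a polynomial $P(j)$ of degree $t-1$, divide it by $b+j$ to get $P(j)=(b+j)Q(j)+P(-b)$ with $\deg Q\le t-2$, kill the $Q$-part via the vanishing of the $t$-th finite difference of a polynomial of degree below $t$, and evaluate the remaining sum $\sum_j(-1)^j\binom{t}{j}(b+j)^{-1}$ by Lemma~\ref{14-Lem_Zeil_1} applied with parameters $(b,b+1)$; the remainder $P(-b)=\prod_{i=1}^{t-1}(a-b+i)$ is exactly the factor $\Gamma(a-b+t)/\Gamma(a-b+1)$ read as a finite product. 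All steps check out, including the parameter conditions for Lemma~\ref{14-Lem_Zeil_1} (only $b>0$ is needed) and the edge case $t=1$ (empty product, $Q\equiv 0$). Your argument buys transparency: it requires no certificate to be produced or verified, it explains structurally why the answer factors as it does (one factor from the remainder, one from the pure $1/(b+j)$ sum), and it makes the convention $\Gamma(a-b+t)/\Gamma(a-b+1)=(a-b+1)\cdots(a-b+t-1)$ for $b-a-t\in\N_0$ automatic rather than a matter of post hoc interpretation. The paper's certificate method has the advantage of being uniform with the proofs of Lemmas~\ref{14-Lem_Zeil_1} and~\ref{14-Lem_Zeil_jz}, but either proof is complete.
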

  The factor $\Gamma(a - b + t)$ in Lemma \ref{14-Lem_Zeil_jt} does not cause any problems for $b - a - t \in \N_{0}$, as the also appearing $\Gamma(a - b + 1)$ cancels out the singularity. In our application of the lemma, we will additionally know that $a>b$.

  \begin{proof}
    We set
    \begin{align*}
       F(t, j) := (-1)^{j} \frac{1}{b + j} \binom{t}{j} \frac {\Gamma(a + t + j)} {\Gamma(a + 1 + j)},
    \end{align*}
    for which we see that $F(t, j) = 0$ if $j \notin  \{ 0, \ldots, t \}$, and
    \begin{align*}
      f(t) := \sum_{j = 0}^{t} F(t, j).
    \end{align*}
    Furthermore, we define the function
    \begin{align*}
      G(t, j) :=
        \begin{cases}
          \frac{j (a + j) (a + 2t + 1) (t^2 + t(a + 1) - j + 1) (b + j)}{t (t - j + 1) (a + t) (a + t + 1)} F(t, j), \qquad & \text{ for } j \in \{ 0, \ldots, t \}, \\
          G(t, t) - (b + t + 1) F(t + 1, t) & \\
          \qquad + (t + 1) (a - b + t) F(t, t), \qquad & \text{ for } j = t + 1, \\
          0, & \text{ otherwise}.
        \end{cases}
    \end{align*}
    A direct calculation yields
    \begin{align*}
      - (b + t) F(t, j) + t (a - b + t - 1) F(t - 1, j) = G(t - 1, j + 1) - G(t - 1, j)
    \end{align*}
    for $j \in \N_{0}$. Summing this relation over $j \in \{ 0, \ldots, t \}$ gives
    \begin{align*}
      - (b + t) f(t) + t (a - b + t - 1) f(t - 1) = 0
    \end{align*}
    and thus
    \begin{align*}
      f(t) & = \frac {t (a - b + t - 1)} {b + t} f(t - 1) \\
      & = \frac {(t - 1) t (a - b + t - 2) (a - b + t - 1)} {(b + t - 1) (b + t)} f(t - 2) \\
      & \ \, \vdots \\
      & = \frac {2 \cdots t (a - b + 1) \cdots (a - b + t - 1)} {(b + 2) \cdots (b + t)} f(1) \\
      & =  \frac {\Gamma(t + 1) \Gamma(a - b + t) \Gamma(b + 2)} {\Gamma(a - b + 1) \Gamma(b + t + 1)} f(1).
    \end{align*}
    With
    \begin{align*}
      f(1) = \frac{1}{b} - \frac{1}{b + 1} = \frac{1}{b (b + 1)}
    \end{align*}
    we obtain the assertion.
  \end{proof}

\end{document}